\theoremstyle{plain}
\newtheorem{theorem}{Theorem}[section] 
\newtheorem{thm}{Theorem}[section] 
\newtheorem{conj}[thm]{Conjecture} 
\newtheorem{lem}[thm]{Lemma}  
\newtheorem{cor}[thm]{Corollary}
\theoremstyle{definition}
\newtheorem{example}[theorem]{Example}
\theoremstyle{remark}
\newtheorem{rem}[theorem]{Remark}
\numberwithin{equation}{section}
\newcommand{\ps}{\pi^{\xi}}
\newcommand{\e}{\mathbf{e}}
\newcommand{\s}{\mathbf{s}}
\DeclareMathOperator{\amaj}{amaj}
\DeclareMathOperator{\asc}{asc}
\DeclareMathOperator{\A}{Asc}
\DeclareMathOperator{\comaj}{comaj}
\DeclareMathOperator{\cyc}{cyc}
\DeclareMathOperator{\des}{des}
\DeclareMathOperator{\D}{Des}
\DeclareMathOperator{\desB}{\des_B}
\DeclareMathOperator{\exc}{exc}
\DeclareMathOperator{\finv}{finv}
\DeclareMathOperator{\fmaj}{fmaj}
\DeclareMathOperator{\Ifmaj}{afmaj}
\DeclareMathOperator{\inv}{inv}
\DeclareMathOperator{\maj}{maj}
\DeclareMathOperator{\stat}{stat}
\newcommand{\integers}{\mathbb{Z}} 
\newcommand{\reals}{\mathbb{R}} 
\newcommand{\Sn}{{\mathfrak S}_n} 
\newcommand{\Bn}{{\mathfrak B}_n} 
\newcommand{\Dn}{{\mathfrak D}_n} 
\newcommand{\w}{\integers_k \wr {\mathfrak S}_n} 
\newcommand{\PP}{\mathscr{P}} 
\newcommand{\I}{\mathfrak{I}} 
\newcommand{\E}{{E}} 
\newcommand{\poly}{E} 
\newcommand{\h}{\mathbf{h}} 
\newcommand{\la}{\lambda}
\newcommand{\floor}[1]{\ensuremath{\left\lfloor #1 \right\rfloor}} 
\begin{document}

\title{The $\s$-Eulerian polynomials have only real roots}
\author{Carla D. Savage}
\address{Department of Computer Science, North Carolina State University, Raleigh, North Carolina 27695-8206}
\curraddr{}
\email{savage@ncsu.edu}
\thanks{}
\author{Mirk\'o Visontai}
\address{Department of Mathematics, University of Pennsylvania,
Philadelphia, Pennsylvania 19104}
\curraddr{Department of Mathematics, Royal Institute of Technology,
SE-100 44 Stockholm, Sweden}
\email{visontai@kth.se}
\thanks{}
\subjclass[2010]{Primary 05A05; 26C10; Secondary 05A19; 05A30}

\date{}

\dedicatory{}


\begin{abstract}
We study the roots of generalized Eulerian polynomials via a novel approach. We interpret Eulerian polynomials 
as the generating polynomials of a statistic over inversion sequences. Inversion sequences (also known as Lehmer 
codes or subexcedant functions) were recently generalized by Savage and Schuster, to arbitrary sequences $\s$ of 
positive integers, which they called $\s$-inversion sequences.

Our object of study is the generating polynomial of the {\em ascent} statistic over the set of $\s$-inversion 
sequences of length $n$. Since this ascent statistic over inversion sequences is equidistributed with the descent 
statistic over permutations we call this generalized polynomial the \emph{$\s$-Eulerian polynomial}. The main 
result of this paper is that, for any sequence $\s$ of positive integers, the $\s$-Eulerian polynomial has only real roots.

This result is first shown to generalize several existing results about the real-rootedness of various Eulerian 
polynomials. We then show that it can be used to settle a conjecture of Brenti, that Eulerian polynomials for 
all finite Coxeter groups have only real roots, and partially settle a conjecture of Dilks, Petersen, Stembridge 
on type B affine Eulerian polynomials. 
It is then extended to several $q$-analogs. We show that 
the MacMahon--Carlitz $q$-Eulerian polynomial has only real roots whenever $q$ is a positive real number confirming 
a conjecture of Chow and Gessel. The same holds true 
for the hyperoctahedral group and the wreath product groups, confirming 
further conjectures of Chow and Gessel, and Chow and Mansour, respectively. 

Our results have interesting geometric consequences as well.

\end{abstract}

\maketitle

\section{Introduction}

For a sequence $\s = \{s_i\}_{i\ge 1}$  of positive integers,
the $n$-dimensional \emph{$\s$-inversion sequences}   are defined by
\[\I_n^{(\s)} = \{(e_1, \dotsc, e_n) \in \mathbb{Z}^n \mid  
 0\le e_i < s_i \;\;\text{for}\; 1\le i \le n\}.\]
 The \emph{ascent set} of an $\s$-inversion sequence 
$\e = (e_1, \dotsc, e_n) \in \I_n^{(\s)}$ is  the set
\begin{equation}
\A \e = \left\{i \in \{0,1, \ldots, n-1\} \Bigm| \frac{e_{i}}{s_{i}} < \frac{e_{i+1}}{s_{i+1}} \right\},
\label{Ascdef}
\end{equation}
with the convention that $e_0=0$ (and $s_0 = 1$). 
The  \emph{ascent statistic} on $\e \in \I_n^{(\s)}$ is
\[
\asc \e = \left|\A\e\,\right|.
\]

When $\s=(1,2,3, \ldots )$, there are well-known bijections between $\I_n^{(\s)}$, the
set of inversion sequences and
$\Sn$, the set of permutations of $\{1,2, \ldots, n\}$.
We use $\I_n^{(\s)}$ to generalize results about the distribution of statistics on $\Sn$.
The  generating polynomial of the descent statistic is the {\em Eulerian polynomial} defined as
\[A_n(x) \  := \  \sum_{\pi \in \Sn} x^{\des \pi}\,. \]
 Here, $\des \pi$ is the number of indices 
$i \in \{1,2, \ldots, n-1\}$ such that $\pi_i > \pi_{i+1}$.

Our object of study is the generating polynomial of the ascent statistic over the set of $\s$-inversion sequences of length $\I_n^{(\s)}$: 
\[ \E_n^{(\s)}(x) = \sum_{\e \in \I_{n}^{(\s)}} x^{\asc \e}. \]
Since this ascent statistic over inversion sequences for $\s=(1,2,3, \ldots )$ is equidistributed with the descent statistic over permutations (see \cite[Lemma 1]{SS}) we call this generalized polynomial the \emph{$\s$-Eulerian polynomial}. 

In addition to its many remarkable properties, $A_n(x)$ is known to have {\em only real roots} \cite{Fro},
a property which implies that its coefficient sequence is unimodal and log-concave.

\medskip
Our main result is the following generalization.

\begin{thm}
Let $\s$ be any sequence of positive integers and $n$ a positive integer.
Then the $\s$-Eulerian polynomial
\[
\E_n^{(\s)}(x) = \sum_{\e \in \I_{n}^{(\s)}}  x^{\asc \e} \]
has only real roots.
\label{thm:main}
\end{thm}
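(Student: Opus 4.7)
The natural approach is induction on $n$, strengthening the statement by refining the $\s$-Eulerian polynomial according to the value of the last coordinate. For each $i \in \{0, 1, \ldots, s_n - 1\}$, define
\[
p_{n,i}^{(\s)}(x) = \sum_{\substack{\e \in \I_n^{(\s)} \\ e_n = i}} x^{\asc \e},
\]
so that $\E_n^{(\s)}(x) = \sum_{i=0}^{s_n - 1} p_{n,i}^{(\s)}(x)$. The plan is to prove, by induction on $n$, the stronger statement that the tuple $\bigl(p_{n,0}^{(\s)}, p_{n,1}^{(\s)}, \ldots, p_{n,s_n-1}^{(\s)}\bigr)$ forms an \emph{interlacing sequence} of polynomials. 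Since any nonnegative linear combination of an interlacing sequence is real-rooted, this will immediately yield Theorem~\ref{thm:main}.

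The recurrence driving the induction comes directly from the definition of ascent: appending $e_n = i$ to $(e_1, \ldots, e_{n-1}) \in \I_{n-1}^{(\s)}$ creates an ascent at position $n-1$ if and only if $e_{n-1}/s_{n-1} < i/s_n$. Because $j \mapsto j/s_{n-1}$ is increasing, the set of admissible previous last coordinates is always an initial segment $\{0, 1, \ldots, k_i - 1\}$, where
\[
k_i := \bigl|\{j \in \{0,\ldots,s_{n-1}-1\} : j/s_{n-1} < i/s_n\}\bigr|
\]
is weakly increasing in $i$, with $k_0 = 0$. This yields the recurrence
\[
p_{n,i}^{(\s)}(x) = x \sum_{j=0}^{k_i - 1} p_{n-1,j}^{(\s)}(x) + \sum_{j=k_i}^{s_{n-1}-1} p_{n-1,j}^{(\s)}(x).
\]
The base case $n=1$ is immediate: $p_{1,0}^{(\s)}(x) = 1$ and $p_{1,i}^{(\s)}(x) = x$ for $1 \le i < s_1$, which form an interlacing sequence.

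The inductive step then reduces to an abstract combinatorial lemma of roughly the following form: \emph{if $(f_0, \ldots, f_{m-1})$ is an interlacing sequence of polynomials with nonnegative coefficients and $0 \leq k_0 \leq k_1 \leq \cdots \leq k_{M-1} \leq m$ is any weakly increasing sequence of integers, then the polynomials $g_i := x \sum_{j < k_i} f_j + \sum_{j \geq k_i} f_j$ again form an interlacing sequence.} I would verify such a lemma by decomposing the split-point transformation into atomic steps (shifting one $k_i$ up by one) and appealing to the classical criterion that $p$ interlaces $q$ precisely when $p + tq$ is real-rooted for every $t \in \reals$, combined with the fact that, within an interlacing family, both multiplication by $x$ and nonnegative summation preserve the existence of common interlacers.

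The main obstacle is crafting this interlacing-preservation lemma robustly enough to apply to \emph{arbitrary} weakly increasing split-point sequences $(k_i)$, since the actual $k_i$ arising from the recurrence depend on the ratios $i\,s_{n-1}/s_n$ through a floor function and are governed by the number-theoretic interaction of $s_{n-1}$ and $s_n$. Once this lemma is in hand the induction closes, and summing the interlacing family $\bigl(p_{n,0}^{(\s)}, \ldots, p_{n,s_n-1}^{(\s)}\bigr)$ establishes the real-rootedness of $\E_n^{(\s)}(x)$.
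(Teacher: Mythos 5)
Your proposal follows essentially the same strategy as the paper: the same refinement by the last coordinate, the same recurrence, and the equivalent language of interlacing sequences in place of the paper's ``compatibility'' (the paper itself proves, in Lemma~\ref{lem:interlacing}, that for polynomials with nonnegative coefficients the two frameworks coincide). The structure is right, and the abstract lemma you identify is exactly the paper's Theorem~\ref{thm:compatible}.

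The gap is in the sketch of how to prove that lemma. You propose reducing to ``atomic steps'' in which a single split point $k_i$ is raised by one. But interlacing is not a transitive relation, so establishing that each single-step shift respects interlacing cannot be chained to conclude that $g_i$ interlaces $g_j$ for arbitrary $k_i \le k_j$. Likewise, ``multiplication by $x$ and nonnegative summation preserve common interlacers'' would at best give that the $g_i$ admit a common interlacer (hence that $\sum g_i$ is real-rooted), but the induction needs the stronger fact that the $g_i$ again form an interlacing sequence in the correct order, and common interlacer is not the same as pairwise ordered interlacing. The paper handles this head-on: to see that $g_i$ and $g_j$ are compatible, it rewrites $c_i g_i + c_j g_j$ as a conic combination of the three collections $\{x f_\alpha\}_{\alpha < t_i}$, $\{(c_i + c_j x) f_\beta\}_{t_i \le \beta < t_j}$, $\{f_\gamma\}_{\gamma \ge t_j}$, verifies those are pairwise compatible directly from hypotheses (a) and (b), and then invokes the Chudnovsky--Seymour lemma (pairwise compatible with positive leading coefficients $\Rightarrow$ compatible) to conclude; a parallel argument handles $x g_i$ versus $g_j$. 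That conic-decomposition step, together with an explicit appeal to Chudnovsky--Seymour, is what would need to replace the ``atomic step'' idea for the induction to close.
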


In Section~\ref{sec:main}, we prove Theorem~\ref{thm:main} by refining the $\s$-Eulerian polynomials
in a way that allows for an easy recurrence. Using a result of Chudnovsky and Seymour \cite{ChudnovskySeymour}, we 
show that the recurrence preserves a stronger property---that these refined polynomials are \emph{compatible}---which in turn implies the theorem. 

Variations of the Eulerian polynomials  arise as descent generating polynomials in combinatorial families other 
than permutations. As we show in Section~\ref{sec:applications}, Theorem~\ref{thm:main} 
generalizes many previous results concerning the real-rootedness of these Eulerian polynomials.
It also implies some new results.  Notably, by extending our main theorem we are able to settle 
a conjecture of Brenti on the Eulerian polynomials of Coxeter groups \cite{brenti94} 
and partially settle a related 
conjecture of Dilks, Petersen, and Stembridge for affine Eulerian polynomials of Weyl groups \cite{DPS09}.

In Section~\ref{sec:geometry}, we discuss the geometric significance of
Theorem~\ref{thm:main}. The above mentioned Eulerian polynomials are known to be the $h$-polynomials
of Coxeter complexes and the affine Eulerian polynomials are the $h$-polynomials of the reduced Steinberg tori. A different geometric connection can be obtained
by considering the {\em $\s$-lecture hall polytope}, $\PP_n^{(\s)}$ which is defined by
\[
\PP_n^{(\s)} = \left\{ (\la_1, \la_2, \dotsc, \la_n) \in \reals^n \ \Big| \
0 \leq \frac{\la_{1}}{s_{1}}
\leq \frac{\la_{2}}{s_{2}} \leq \cdots
\leq \frac{\la_{n}}{s_{n}} \leq 1 \right\}.
\]
It is known (see \cite{SS}) that the $\s$-Eulerian polynomial is the $\h^*$-polynomial of $\PP_n^{(\s)}$.

In Section~\ref{sec:q-analogs},
we extend Theorem~\ref{thm:main}
 to a $(p,q)$-analog of $\E_n^{(\s)}(x)$.
With the help of this extension,
we show, for the first time, that the  MacMahon--Carlitz $q$-Eulerian polynomial has only real roots
for positive  real $q$, a result conjectured by Chow and Gessel in \cite{ChowGessel}.
We further show that
several other $q$-Eulerian polynomials for signed permutations, and the wreath products $\w$
(colored permutations, indexed permutations) are real-rooted for positive $q$.
This includes the generating polynomial for the joint distribution of descent and flag-inversion number.
We also study the generating polynomial for the joint distribution of descent and flag-major index on signed permutations and the wreath products. We prove that this $q$-analog also has all roots real for positive $q$, a result which was conjectured by Chow and Gessel \cite{ChowGessel}  for signed permutations and by Chow and Mansour \cite{ChowMansour}, for $\w$.

\section{The main result}
\label{sec:main}
In this section, we prove Theorem~\ref{thm:main} using the method of ``compatible polynomials'' in conjunction with a recurrence for $\E^{(\s)}_n(x)$. We also discuss some
connections of our results to previous work and the more familiar notion of interlacing (of roots).

\subsection{A recurrence for the $\s$-Eulerian polynomial}

Let $\chi(\varphi)$ be $1$ if the statement $\varphi$ is true and $0$ otherwise.
In order to show that the $\s$-Eulerian polynomial
has all real roots, consider a refinement:
\begin{equation}
\poly^{(\s)}_{n,i}(x) := \sum_{\e \in \I_{n}^{(\s)}}
\chi(e_n = i)\, x^{\asc \e}\,.
\label{eq:Pni}
\end{equation}
Clearly,  
\begin{equation}
\E^{(\s)}_{n}(x) = \sum_{i=0}^{s_n-1} \poly^{(\s)}_{n,i}(x).
\label{Esum}
\end{equation}
The benefit of introducing these polynomials is that they satisfy a simple recurrence, which we now prove.
\begin{lem}
Given a sequence $\s = \{s_i\}_{i \ge 1}$ of positive integers, let
$n \geq 1$ and $0 \leq i < s_n$.
Then, for all $n > 1$, we have the recurrence
\begin{equation}
\poly^{(\s)}_{n,i}(x) \  = \   \sum_{h=0}^{t_i-1} x \poly^{(\s)}_{n-1,h}(x)  \ +
\sum_{h=t_i}^{s_{n-1}-1}  \poly^{(\s)}_{n-1,h}(x),
\label{eq:recurrencePni}
\end{equation}
where $t_i = \lceil i s_{n-1}/s_{n} \rceil,$ 
with initial conditions 
$\poly^{(\s)}_{1,0}(x)=1$ and $\poly^{(\s)}_{1,i}(x)=x$ for $0 < i < s_1$.

\end{lem}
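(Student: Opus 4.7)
The plan is to prove the recurrence by partitioning the $\s$-inversion sequences contributing to $\poly^{(\s)}_{n,i}(x)$ according to the value of their penultimate entry $e_{n-1}$, which ranges over $\{0,1,\dotsc,s_{n-1}-1\}$. For such an $\e = (e_1,\dotsc,e_{n-1},i)$ with $e_{n-1}=h$, I would let $\e' = (e_1,\dotsc,e_{n-1}) \in \I_{n-1}^{(\s)}$ and observe that $\A \e \cap \{0,1,\dotsc,n-2\} = \A \e'$, since the comparisons defining these ascents only involve the first $n-1$ coordinates (and the convention $e_0=0$, $s_0=1$, which is shared by $\e$ and $\e'$). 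Hence
\[
\asc \e \;=\; \asc \e' \,+\, \chi\!\left(\tfrac{h}{s_{n-1}} < \tfrac{i}{s_n}\right),
\]
so each $\e$ contributes $x^{\asc \e'}$ or $x \cdot x^{\asc \e'}$ to $\poly^{(\s)}_{n,i}(x)$ according to whether position $n-1$ is an ascent.

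The next step is to rewrite the inequality $h/s_{n-1} < i/s_n$ in a form matching the summation index in the claimed recurrence. Clearing denominators, it is equivalent to $h\, s_n < i\, s_{n-1}$, i.e., $h < i s_{n-1}/s_n$. Since $h$ is a nonnegative integer, this last inequality is equivalent to $h \le \lceil i s_{n-1}/s_n \rceil - 1$ in both cases where $is_{n-1}/s_n$ is or is not an integer, i.e., to $h < t_i$. This is really the only calculation with any content in the proof; the mild care needed to handle both the integer and non-integer cases of $is_{n-1}/s_n$ uniformly is the one place where the argument could go astray, so I would spell this equivalence out explicitly.

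Combining these two observations, summing over all $h \in \{0,\dotsc,s_{n-1}-1\}$, and grouping the terms with $h < t_i$ separately from those with $h \ge t_i$ gives exactly the recurrence~\eqref{eq:recurrencePni}. Finally, for the base case $n=1$, the definition yields $\poly^{(\s)}_{1,i}(x) = x^{\asc(0,i)}$, and since $\asc(0,i)$ equals $1$ when $0/1 < i/s_1$ (i.e.\ $i>0$) and $0$ when $i=0$, the stated initial conditions follow.
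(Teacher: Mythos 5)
Your proposal is correct and follows essentially the same argument as the paper: partition by the penultimate entry $e_{n-1}=h$, note that the first $n-1$ ascent comparisons are unchanged, translate the condition $h/s_{n-1} < i/s_n$ into $h \le t_i - 1$, and handle the $n=1$ base case via the $e_0=0$ convention. The only addition is that you spell out the ceiling equivalence in both the integer and non-integer cases, which the paper leaves implicit.
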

\begin{proof}
Consider an inversion sequence $\e=(e_1, \dotsc, e_{n-1}, e_{n}) \in  \I_{n}^{(\s)}$
with  $e_n=i$.
By definition (\ref{Ascdef}) of the ascent set, 
$n-1 \in \A \e$ if and only if
$e_{n-1}/s_{n-1} < i/s_n$,
or, equivalently, if and only if
$0 \leq e_{n-1} \leq  \lceil i s_{n-1}/s_{n} \rceil - 1$ holds. So,
\[ \asc (e_1, \ldots,e_{n-1},i)  =  \asc (e_1, \ldots, e_{n-1})+ \chi(e_{n-1} \leq t_i -1),
\]
which proves (\ref{eq:recurrencePni}) by setting $h= e_{n-1}$. For the initial conditions, recall that $e_0/s_0 = 0$, by definition, and hence
$0 \in \A \e$ if and only if  $e_1 > 0$.

\end{proof}

\begin{rem} For the special case of the classical Eulerian polynomials 
$A_n(x) = \poly^{(1,2, \dots, n)}_n(x)$, essentially 
the same refinement $A_{n,i}(x)$ was considered  in a geometric context 
in \cite[Section 4]{NPT11} under the name \emph{restricted
Eulerian polynomials}. Thanks to Eran Nevo for bringing this to our attention.
\end{rem}

\subsection{Compatible polynomials}

Polynomials $f_1(x), \dotsc, f_m(x)$ over $\reals$ are \emph{compatible} if, 
for all real $c_1, \dotsc, c_m \ge 0$,
the polynomial \[\sum_{i=1}^m c_if_i(x)\] has only real roots.
We call such a weighted sum $\sum_{i=1}^m c_if_i(x)$ of polynomials, with nonnegative coefficients 
$c_1, \dotsc, c_m$
a {\em conic combination} of $f_1(x), \dotsc, f_m(x)$.
A real-rooted polynomial (over $\reals$) is compatible with itself.
The polynomials $f_1(x), \dotsc, f_m(x)$ are \emph{pairwise compatible}
if for all $i,j \in \{1,2, \dots, m\}$, $f_i(x)$ and $f_j(x)$ are compatible.

The following lemma is useful in proving that a collection of polynomials is compatible.
\begin{lem}[Chudnovsky--Seymour \cite{ChudnovskySeymour}, 2.2]
\label{lem:ChudnovskySeymour}
The polynomials $f_1, \dotsc, f_m$ with positive leading coefficients are pairwise compatible if and only if they are compatible.
\end{lem}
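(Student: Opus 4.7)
The forward direction is trivial: if $f_1, \dotsc, f_m$ are compatible, then any pair $f_i, f_j$ is compatible by specializing the conic combination to $c_i f_i + c_j f_j$, setting $c_k = 0$ for $k \notin \{i,j\}$. In particular, each individual $f_i$ is real-rooted, since pairwise compatibility of $f_i$ with itself implies $2 f_i$ has only real roots.

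For the reverse (substantive) direction, my plan is to route through the classical notion of a \emph{common interlacer}. Say that a real-rooted polynomial $h$ \emph{interlaces} $f$ if the roots of $h$ separate those of $f$ in the usual sense, and call $h$ a common interlacer of a family if it interlaces each member. A standard and easily verified fact (via partial fraction decompositions of $f/h$ and $g/h$, whose residues at the roots of $h$ have a common sign when $h$ interlaces both $f$ and $g$) is that being interlaced by a fixed $h$ is preserved under conic combinations: if $h$ interlaces both $f$ and $g$, then $h$ interlaces $c_1 f + c_2 g$ for all $c_1, c_2 \ge 0$. Thus it suffices to exhibit a single polynomial $h$ that is a common interlacer for the entire family $f_1, \dotsc, f_m$, since then any conic combination $\sum c_i f_i$ is automatically real-rooted.

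To construct such an $h$, I would invoke a second classical fact: for polynomials with positive leading coefficients, pairwise compatibility of $f$ and $g$ is equivalent to the existence of a common interlacer for the pair. This is typically established by a continuity argument, tracking where roots of $c_1 f + c_2 g$ could leave the real axis as $(c_1, c_2)$ varies over the nonnegative quadrant; any such escape forces a double real root which in turn forces an interlacing configuration. Combined with the pairwise compatibility hypothesis, this yields pairwise common interlacers $h_{ij}$. I would then induct on $m$ to produce a single common interlacer for all of $f_1, \dotsc, f_m$: at each stage, adjoin $f_m$ and adjust the interlacer — a natural candidate being a suitable conic combination such as $\sum_i f_i$ or the derivative of such a sum — using the pairwise data of $f_m$ with each of the previously handled $f_i$ to verify the required separation of roots.

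The main obstacle is precisely this last step: lifting pairwise common interlacers to a single common interlacer for the whole family. This is the technical heart of the argument, and it is where the positive leading coefficient hypothesis is essential, since it ensures that the canonical conic combinations used as candidate interlacers have the correct degree and sign behavior at $\pm\infty$, ruling out the degenerate cancellations that would otherwise let roots slip off the real axis.
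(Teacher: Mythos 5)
The paper does not prove this lemma; it is cited as Theorem 2.2 of Chudnovsky and Seymour and used as a black box, so there is no in-paper proof to match against. Your proposal instead reconstructs the argument from the cited source, and the high-level plan you lay out---reduce compatibility of the whole family to the existence of a single common interlacer, reduce pairwise compatibility to pairwise common interlacers, then pass from pairwise to global---is indeed the Chudnovsky--Seymour strategy.

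However, as you yourself flag, the proposal does not actually carry out the crucial pairwise-to-global step, and the mechanism you gesture at there would not work. Neither $\sum_i f_i$ nor $(\sum_i f_i)'$ is a viable candidate common interlacer: the sum has the wrong degree (it cannot sit strictly between the roots of each $f_i$), and while $(\sum_i f_i)'$ interlaces $\sum_i f_i$ by Rolle, there is no reason it interlaces each individual $f_j$. The correct argument here is not analytic but combinatorial, a Helly-type statement on the roots. Assuming for concreteness all $f_i$ have degree $d$ with roots $\alpha_{i,1}\le\cdots\le\alpha_{i,d}$, a degree-$(d-1)$ polynomial with roots $\beta_1\le\cdots\le\beta_{d-1}$ interlaces $f_i$ exactly when $\alpha_{i,j}\le\beta_j\le\alpha_{i,j+1}$ for every $j$; so a common interlacer for the whole family exists iff for each $j$ the interval $\left[\max_i \alpha_{i,j},\ \min_i \alpha_{i,j+1}\right]$ is nonempty. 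Pairwise common interlacers give $\alpha_{i,j}\le\alpha_{k,j+1}$ for all $i,k$, which is exactly the nonemptiness of each such interval, and one may take $\beta_j=\max_i\alpha_{i,j}$. This root-slot argument, not a conic combination or a derivative, is what closes the gap you identified; it also requires a short side discussion of polynomials whose degrees differ by one, which the positive-leading-coefficient hypothesis keeps under control. Until that step is supplied explicitly, the proposal is a correct outline of the right approach but not a proof.
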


\subsection{Proof of Theorem~\ref{thm:main}}

We will prove Theorem~\ref{thm:main} by establishing the following---more general---theorem. Theorem~\ref{thm:main} then follows in view of
(\ref{Esum}), (\ref{eq:recurrencePni}) and Lemma \ref{lem:ChudnovskySeymour}.
\begin{thm}
\label{thm:compatible}
Given a set of polynomials $f_1, \dotsc, f_m \in \mathbb{R}[x]$ with positive leading
coefficients satisfying
for all $1\le i < j \le m$ that
\begin{enumerate}[label=\emph{(\alph*)}, ref=(\alph*)]
\item $f_i(x)$ and $f_j(x)$ are compatible, and \label{f1}
\item $xf_i(x)$ and $f_j(x)$ are compatible \label{f2}
\end{enumerate}
define another set of polynomials $g_1, \dotsc, g_{m'}\in \mathbb{R}[x]$ by the equations
\[g_k(x) = \sum_{\ell=0}^{t_k-1} xf_\ell(x) + \sum_{\ell=t_k}^{m}f_\ell(x),\quad \mathrm{for}\; 1\le k \le m'\]
where $0\le t_0\le t_1 \le \dotso \le t_{m'} \le m$. Then, for all $1\le i < j \le m'$ 
\begin{enumerate}[label=\emph{(\alph*')}, ref=(\alph*')]
\item $g_i(x)$ and $g_j(x)$ are compatible, and \label{g1}
\item $xg_i(x)$ and $g_j(x)$ are compatible. \label{g2}
\end{enumerate}
\end{thm}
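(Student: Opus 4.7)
My plan is to prove Theorem~\ref{thm:compatible} by a \emph{nested} application of the Chudnovsky--Seymour lemma (Lemma~\ref{lem:ChudnovskySeymour}). Both conclusions (a') and (b') are already pairwise statements, so I would fix indices $1\le i<j\le m'$, note that $t_i\le t_j$, and aim to show that every nonnegative combination $\alpha g_i+\beta g_j$ and every nonnegative combination $\alpha\,xg_i+\beta g_j$ has only real roots.

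The central step is a three-block decomposition of the $f_\ell$ at the break-points $t_i$ and $t_j$. I would set
\[p(x)=\sum_{\ell<t_i}f_\ell(x),\qquad q(x)=\sum_{t_i\le\ell<t_j}f_\ell(x),\qquad r(x)=\sum_{\ell\ge t_j}f_\ell(x),\]
and introduce the auxiliary polynomial $W(x)=xp(x)+r(x)$ lumping the two outer blocks together. A direct computation gives $g_i=W+q$ and $g_j=W+xq$, whence
\[\alpha g_i+\beta g_j=(\alpha+\beta)\,W+\alpha\,q+\beta\,xq,\qquad \alpha\,xg_i+\beta g_j=\beta\,W+\alpha\,xW+(\alpha+\beta)\,xq.\]
So each combination lies in the conic span of a three-element family: $\{W,q,xq\}$ for (a') and $\{W,xW,xq\}$ for (b'). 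Both families have positive leading coefficients, so a first invocation of Chudnovsky--Seymour reduces the task to checking pairwise compatibility of the members. The ``same-block'' pairs $(q,xq)$ and $(W,xW)$ are automatic from the factorisation $c_1 q+c_2 xq=q(c_1+c_2 x)$, and $(xW,xq)$ reduces to $(W,q)$ after factoring out $x$.

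All the remaining work---and the main obstacle---is the compatibility of the mixed pairs $(W,q)$ and $(W,xq)$. I would observe that any nonnegative combination of either pair, after regrouping by index, expands as a sum
\[\sum_{k\in I}a_k\,xf_k(x)+\sum_{\ell\in J}b_\ell\,f_\ell(x)\]
with nonnegative coefficients and with $\max I<\min J$: explicitly $(I,J)=\bigl(\{\ell:\ell<t_i\},\{\ell:\ell\ge t_i\}\bigr)$ for $(W,q)$ and $\bigl(\{\ell:\ell<t_j\},\{\ell:\ell\ge t_j\}\bigr)$ for $(W,xq)$. A second invocation of Chudnovsky--Seymour reduces real-rootedness of such a sum to pairwise compatibility of the family $\{xf_k:k\in I\}\cup\{f_\ell:\ell\in J\}$; pairs of the same type are handled by hypothesis (a) (multiplied by $x$ when needed), while every mixed pair $(xf_k,f_\ell)$ satisfies $k\in I$, $\ell\in J$, hence $k<\ell$, so hypothesis (b) applies in exactly the direction in which it is assumed. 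The delicate point is precisely this asymmetry: the decomposition $W=xp+r$ has to be arranged so that the $x$-factor sits only on indices strictly below those of the $f_\ell$ it is combined with, and it is this constraint that forces the particular grouping of $p$ and $r$ into a single block.
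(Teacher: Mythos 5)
Your proof is correct, and the argument is essentially the same as the paper's: both decompose at the breakpoints $t_i\le t_j$ into three blocks, reduce to real-rootedness of a conic combination via Chudnovsky--Seymour, and then make a second, nested invocation of Chudnovsky--Seymour on the cross-block pairs, at which point hypothesis~(b) applies precisely because the index carrying the extra factor of $x$ is strictly smaller. The one organizational wrinkle is your aggregate $W=xp+r$: it lets you run the outer Chudnovsky--Seymour step on a weight-independent three-element family $\{W,q,xq\}$ (resp.\ $\{W,xW,xq\}$), whereas the paper works with the $m$-element family $\{xf_\alpha\}\cup\{(c_i+c_jx)f_\beta\}\cup\{f_\gamma\}$ whose middle block carries the coefficients $c_i,c_j$; this makes your version of the pairwise bookkeeping a bit tidier, but the two proofs bottom out in exactly the same pairwise facts about the $f_\ell$'s.
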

\begin{proof} 
We first show \ref{g1},
i.e., that the polynomial
$c_ig_i(x) + c_jg_j(x)$ has only real roots for all 
$c_i, c_j \ge 0.$ By the definition of $g_i(x)$, $g_j(x)$ and the assumption that $t_i \le t_j$
it is clear that 
\[c_ig_i(x) + c_jg_j(x) = \sum_{\alpha=0}^{t_i-1}(c_i + c_j)xf_\alpha(x) + 
\sum_{\beta = t_i}^{t_j-1} (c_i+c_jx)f_\beta(x) + 
\sum_{\gamma=t_j}^{m}(c_i + c_j)f_\gamma(x),\] 
that is, $c_ig_i(x) + c_jg_j(x)$ can be written as a conic 
combination of the following polynomials, which we group into three (possibly empty) sets:
\[\left\{xf_\alpha(x)\right\}_{0\le \alpha < t_i} \  \cup \  
\left\{(c_i+c_jx)f_\beta(x)\right\}_{t_i \le \beta < t_j } \ \cup  \ 
\left\{f_\gamma(x)\right\}_{t_j \le \gamma \le m}\,.\]
Therefore, it suffices to show that these $m$
polynomials are compatible.
In fact, by Lemma~\ref{lem:ChudnovskySeymour},
it is equivalent to show that they
are pairwise compatible. This is what we do next.

First, two polynomials from the same sets are compatible by \ref{f1}. 
Secondly, a polynomial from the first set is compatible with another from the third set by 
\ref{f2}, since $\alpha < \gamma$. To show compatibility between a polynomial from the first set and one from the second, we need that 
$a x f_\alpha(x) + b (c_i  + c_jx) f_\beta(x)$ has only real roots 
for all $a,b,c_i,c_j \ge 0$ and $\alpha < \beta$.
This expression is a conic combination of 
$xf_\alpha(x)$, $xf_\beta(x)$, and $f_\beta(x)$.
Since $\alpha < \beta$, 
these three polynomials 
are again pairwise 
compatible by \ref{f1} and \ref{f2} (and the basic fact the $f(x)$ and $xf(x)$ are compatible),
and hence compatible, by Lemma~\ref{lem:ChudnovskySeymour}. 
Finally, the compatibility of a polynomial in the second set and one in the third set follows by a similar argument, 
exploiting the fact that, $xf_\beta(x)$, $f_\beta(x)$, and $f_\gamma(x)$ are pairwise 
compatible for $\beta < \gamma$.

Now we are left to show \ref{g2}, that $xg_i(x)$ and $g_j(x)$ are compatible for all $i < j$. 
This is done in a similar manner. In order to show that $c_ixg_i(x)+ c_jg_j(x)$ 
is real-rooted for all $c_i, c_j \ge 0$ we show that
\[
\left\{x(c_ix+c_j)f_\alpha(x)\right\}_{0\le \alpha < t_i} \  \cup  \  
\left\{xf_\beta(x)\right\}_{t_i \le \beta < t_j} \ \cup \   
\left\{(c_ix+c_j)f_\gamma(x)\right\}_{t_j \le \gamma \le m}
\]
is a set of compatible polynomials, which follows from analogous reasoning to the above. 
Two polynomials from the same subsets are compatible by \ref{f1}.
 Considering one from the first and one 
from the third subset:  $xf_\alpha(x)$ 
and  $f_\gamma(x)$ are compatible by \ref{f2}, since $\alpha < \gamma$. 
Similarly, $x^2f_\alpha(x)$, $xf_\alpha(x)$, and 
$xf_\beta(x)$  
are pairwise compatible which settles the case when we have a polynomial from the first and one 
from the second subset. Finally, $xf_\beta(x)$, $xf_\gamma(x)$, and $f_\gamma(x)$ are compatible, settling the case of
one polynomial from the second subset and one from the third.

\end{proof}

\begin{proof}[Proof of Theorem~\ref{thm:main}]
We use induction on $n$.
When $n=1$, for $0 \leq i \leq j < s_1$,
\[(E^{(\s)}_{1,i}(x),E^{(\s)}_{1,j}(x)) \in
\{(1,1), \ (1,x), \ (x,x)\}\]
and thus
\[(xE^{(\s)}_{1,i}(x), E^{(\s)}_{1,j}(x)) \in
\{(x,1), \ (x,x), \ (x^2,x)\}\,.\]
Clearly, each of the pairs of polynomials
$(1,1)$, 
$(1,x)$, 
$(x,x)$, 
$(x^2,x)$, is compatible. 
From (\ref{eq:recurrencePni}) we see that the polynomials 
$E^{(\s)}_{n,i}(x)$ satisfy a recurrence of the form required in 
Theorem~\ref{thm:compatible}. Hence, by induction, they are compatible
for all $n$ and $0\le i < s_n$. 
In particular, $E^{(\s)}_{n}(x)$ has only real roots for $n\ge 1$.

\end{proof}

\subsection{Connection to interlacing}
We now make a small detour to discuss the connection of compatibility to interlacing, 
and mention some related work in this direction.

Given $f(x) = \prod_{i=1}^{\deg f} (x -x_i)$ and 
$g(x) = \prod_{j=1}^{\deg g} (x- \xi_j)$, two real-rooted polynomials, 
we say that \emph{$f$ interlaces $g$} if their roots alternate in the following way
 \begin{equation}
 \dots \le x_2 \le \xi_2 \le x_1 \le \xi_1\,.
 \label{eq:roots-interlacing}
 \end{equation}
Note that this requires the degrees of $f$ and $g$ to satisfy the following inequalities:
 $\deg f \le \deg g \le \deg f + 1.$ In particular, the order of polynomials is important.

Interlacing of two polynomials implies the real-rootedness of their arbitrary linear combination
by the famous theorem of Obreschkoff.
\begin{thm}[Satz 5.2 in \cite{obreschkoff}]
Let $f,g \in \mathbb{R}[x]$ with $\deg f \le \deg g \le \deg f + 1$. Then $f$ interlaces 
$g$ if and only if their arbitrary linear combination, 
$c_1f(x) + c_2g(x)$ for all $c_1,c_2 \in 
\mathbb{R}$ has only real roots.
\end{thm}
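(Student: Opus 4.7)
The plan is to prove both implications of Obreschkoff's theorem, using an intermediate value argument for the forward direction and a continuous deformation argument for the converse, with a perturbation step handling degeneracies.

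\textbf{Forward direction.} Assume first that the interlacing (\ref{eq:roots-interlacing}) is strict and that $f, g$ have simple roots $x_1 > \cdots > x_m$ and $\xi_1 > \cdots > \xi_n$ respectively, with $n \in \{m, m+1\}$. Let $h(x) = c_1 f(x) + c_2 g(x)$; the case $c_2 = 0$ is trivial. Evaluating at the roots of $f$ gives $h(x_i) = c_2 g(x_i)$, and because strict interlacing places exactly one root of $g$ in each open interval $(x_{i+1}, x_i)$, the values $g(x_1), g(x_2), \ldots, g(x_m)$ strictly alternate in sign. Thus $h$ has $m - 1$ sign changes across $x_1, \ldots, x_m$ and, by IVT, at least $m - 1$ distinct real roots in $\bigcup_i (x_{i+1}, x_i)$. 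Since $\deg h \le m + 1$, comparing the signs of $h(x_1)$ and $h(x_m)$ with the asymptotic signs of $h$ at $\pm\infty$ (determined by $\deg h$ and the sign of the leading coefficient, which is $c_2$ when $\deg g = m + 1$ and $c_1 + c_2$ when $\deg g = m$) produces the remaining one or two sign changes in the unbounded intervals $(-\infty, x_m)$ and $(x_1, \infty)$. A short case analysis over these possibilities yields $\deg h$ real roots, so all roots of $h$ are real. The non-strict case (coincident roots, or common roots of $f$ and $g$) follows by perturbation: replace $f, g$ with nearby polynomials $f_\varepsilon, g_\varepsilon$ in strict interlacing with simple roots, apply the strict case, and let $\varepsilon \to 0$, using continuity of roots in the coefficients.

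\textbf{Reverse direction.} Suppose every real linear combination $c_1 f + c_2 g$ is real-rooted. Specializing $(c_1, c_2) = (1, 0)$ and $(0, 1)$ shows that $f$ and $g$ are themselves real-rooted. Consider the one-parameter family $h_t(x) = f(x) + t g(x)$. By hypothesis, the roots of $h_t$ are all real for each $t$, and they depend continuously on $t$: as $t \to 0$ they converge to the roots of $f$, and as $|t| \to \infty$ the finite roots converge to the roots of $g$ (with one root escaping to $\pm\infty$ if $\deg g = \deg f + 1$). Since a pair of real roots cannot leave the real axis without first coalescing---any complex conjugate pair must bifurcate from a double real root---the trajectories $t \mapsto (\text{roots of } h_t)$ are continuous real-valued curves linking the roots of $f$ to those of $g$. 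An order/counting argument on these trajectories, combined with the degree constraint $\deg g \le \deg f + 1$, forces precisely the interlacing pattern (\ref{eq:roots-interlacing}).

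\textbf{Main obstacle.} The most delicate aspects are the sign-and-multiplicity bookkeeping in the forward case analysis (especially when $c_1 + c_2 = 0$ drops the degree of $h$, or when $f$ and $g$ share roots) and the rigorous execution of the trajectory argument in the reverse direction. Both are handled cleanly by reducing to the generic strict-interlacing, simple-roots setting via the perturbation-and-limit strategy, leaning on continuity of roots in the polynomial's coefficients.
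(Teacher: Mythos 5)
The paper does not prove this statement; it invokes it as a classical result (Satz~5.2 in Obreschkoff's book), so there is no in-text proof to compare against. Evaluating your argument on its own terms:

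Your \textbf{forward direction} is essentially correct. Evaluating $h=c_1f+c_2g$ at the roots of $f$, using strict alternation of the signs of $g(x_i)$, and invoking IVT is the standard route, and the perturbation-and-limit step for degenerate cases (shared roots, coincident roots) is legitimate because real-rootedness is a closed condition. One simplification you are missing: when $\deg g=\deg f=m$, once you have $m-1$ real roots and $\deg h\le m$ you are already done, since nonreal roots come in conjugate pairs; the asymptotic sign bookkeeping is only actually needed when $\deg g=\deg f+1$. Also, your parenthetical about the escaping root is backwards: when $\deg g=\deg f+1$, the root escapes to infinity as $t\to 0$ (where $h_t\to f$ drops degree), not as $|t|\to\infty$.

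The \textbf{reverse direction} contains a genuine gap. The sentence ``An order/counting argument on these trajectories, combined with the degree constraint, forces precisely the interlacing pattern'' asserts the conclusion rather than proving it; this is exactly where the content of the converse lies. The missing idea is a \emph{confinement} argument: assuming $f,g$ coprime, if $h_t(x_0)=0$ and $g(x_0)=0$ then $f(x_0)=0$, contradicting coprimality, so no root trajectory of $h_t$ ever crosses a root of $g$ (and similarly never revisits a root of $f$ for $t\ne 0$). It is this confinement of each trajectory to a single open interval between consecutive roots of $g$, together with the continuity and the endpoint limits, that pins one root of $f$ into each such interval and yields interlacing; continuity and the no-collision-without-coalescing observation alone do not. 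Moreover, your final paragraph proposes to dispatch degeneracies in the converse by perturbation, but that does not work here: perturbing $f$ and $g$ can destroy the hypothesis that \emph{every} real linear combination is real-rooted, which is not an open condition. The degenerate reverse case is instead handled by factoring out $d=\gcd(f,g)$, reducing to the coprime pair $(f/d,g/d)$, and then reassembling the interlacing---a different reduction than the one you describe.
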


In Theorem~\ref{thm:compatible}, if we require that the $f_1, \dotsc, f_m$ polynomials have
only nonnegative coefficients we can simplify the conditions \ref{f1} and \ref{f2} using the notion of
interlacing. 
(Note that this will not be much of a restriction for us as all polynomials considered in this paper have 
this property.) 
The following lemma is due to D.~G.~Wagner. This version appeared (without a proof) in 
\cite[Lemma~3.4]{Wag00} where it was also mentioned that
it can be proved with the same techniques that were used to obtain \cite[Corollary~5.3]{Wag92}, the special case of the lemma 
when $\deg f = \deg g.$
\begin{lem}
Let $f,g \in \mathbb{R}[x]$ be polynomials with nonnegative coefficients. Then the
 following two statements are equivalent:
\begin{enumerate}[label=\emph{(\roman*)}]
\item $f(x)$ and $g(x)$ are compatible, and $xf(x)$ and $g(x)$ are also compatible.
\item $f(x)$ interlaces $g(x)$.
\end{enumerate}
\label{lem:interlacing}
\end{lem}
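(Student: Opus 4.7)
The plan is to prove the two directions separately, using Obreschkoff's theorem for the easy direction (ii) $\Rightarrow$ (i) and the Chudnovsky--Seymour lemma as an intermediate step for (i) $\Rightarrow$ (ii).

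\medskip

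\emph{Direction (ii) $\Rightarrow$ (i).} Assume $f$ interlaces $g$. By Obreschkoff's theorem, every real linear combination $c_1 f + c_2 g$ is real-rooted, so $f,g$ are compatible in particular. For compatibility of $xf$ and $g$, note that $f$ and $g$ are real-rooted and have nonnegative coefficients, so all their roots lie in $(-\infty, 0]$; in particular $\xi_1 \le 0$. The roots of $xf$ are those of $f$ together with a new rightmost root $0$. Combining the alternation $\ldots \le x_2 \le \xi_2 \le x_1 \le \xi_1 \le 0$ with a quick degree check (using $\deg g \in \{\deg f, \deg f + 1\}$) shows that $g$ interlaces $xf$, and another application of Obreschkoff gives compatibility of $xf$ and $g$.

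\medskip

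\emph{Direction (i) $\Rightarrow$ (ii).} First, the compatibility hypotheses imply that $f$, $g$, and $xf$ are each real-rooted with roots in $(-\infty, 0]$. Next, observe that $f$ and $xf$ are trivially pairwise compatible, since any conic combination equals $(c_1 + c_2 x) f(x)$, which is real-rooted. Together with the two given pairwise compatibilities, this makes $\{f, xf, g\}$ pairwise compatible; by Lemma~\ref{lem:ChudnovskySeymour} the collection is then compatible. In particular, $(\alpha + \beta x) f(x) + \gamma g(x)$ is real-rooted for all $\alpha, \beta, \gamma \ge 0$.

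\medskip

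\emph{Main obstacle.} The hardest step is extracting the precise interlacing pattern from this enhanced real-rootedness. My plan is twofold. First, I would establish the degree constraint $\deg f \le \deg g \le \deg f + 1$: if the degrees differed too much, an asymptotic analysis of $(\alpha + \beta x) f + \gamma g$ under rescalings of the parameters would force some roots to leave the real line along the parameter path, contradicting real-rootedness. Second, with the degrees controlled, I would track the roots of $xf + \gamma g$ as $\gamma$ ranges over $[0, \infty)$: the rightmost root at $x = 0$ of $xf$ (present at $\gamma = 0$) serves as an ``anchor'', and by a Hurwitz-type continuity argument the remaining roots interlace with those of $g$ in the forced direction. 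A subtlety is that compatibility alone is strictly weaker than ``every real combination is real-rooted'' (so Obreschkoff is not directly available on $f, g$); the use of $xf$ as a second pivot, together with the enhanced real-rootedness above, is what rules out the reverse interlacing $g$-interlaces-$f$ and produces the required alternation $\ldots \le x_2 \le \xi_2 \le x_1 \le \xi_1$.
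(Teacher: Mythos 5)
Your direction (ii) $\Rightarrow$ (i) is correct: Obreschkoff gives compatibility of $f,g$, and showing $g$ interlaces $xf$ (by inserting the new root at $0$ to the right of $\xi_1 \le 0$) and applying Obreschkoff again handles $xf, g$. This is a legitimate alternative to the paper's argument for that direction.

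The problem is direction (i) $\Rightarrow$ (ii), which is where the real content of the lemma lives. You correctly observe that $\{f, xf, g\}$ are compatible (via Chudnovsky--Seymour), so $(\alpha + \beta x)f + \gamma g$ is real-rooted for all nonnegative $\alpha, \beta, \gamma$, and you correctly identify that the $xf$ pivot is what must rule out the reverse interlacing. But you then explicitly flag the extraction of interlacing from this as the ``main obstacle'' and describe only a strategy: an unspecified ``asymptotic analysis'' to control degrees and a ``Hurwitz-type continuity argument'' tracking roots of $xf + \gamma g$. Neither is carried out; as written this is a research plan, not a proof, and the hard step is left open. Note also a subtlety your sketch glosses over: compatibility allows only \emph{nonnegative} coefficients, so you never get Obreschkoff's hypothesis for free on the pair $(f,g)$, and the continuity argument would have to be done carefully to avoid circularity.

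The paper's proof takes a genuinely different and much shorter route. It uses the known quantitative characterization of compatibility in terms of the root-counting function $n_f(x_0) = \#\{\text{roots of } f \text{ in } [x_0,\infty)\}$: $f$ and $g$ are compatible iff $|n_f(x_0) - n_g(x_0)| \le 1$ for all $x_0$ (Chudnovsky--Seymour 3.5), and $f$ interlaces $g$ iff $0 \le n_g(x_0) - n_f(x_0) \le 1$. Since $n_{xf}(x_0) = n_f(x_0) + \chi(x_0 \le 0)$ and all roots are nonpositive, one may restrict to $x_0 \le 0$, and the equivalence of the two conditions becomes a one-line arithmetic check: $\bigl(|n_f - n_g| \le 1 \text{ and } |(n_f+1) - n_g| \le 1\bigr) \Longleftrightarrow 0 \le n_g - n_f \le 1$. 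Bringing in this characterization is exactly the missing idea that would close your gap: it converts both compatibility hypotheses and the interlacing conclusion into pointwise inequalities on integer-valued step functions, where the argument is immediate.
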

\begin{proof}
Let $n_f(x_0)$ denote the number of roots of the 
polynomial $f$ in the interval $[x_0,\infty)$. There exists an equivalent formulation for
both compatibility and interlacing in terms of this notion. First, $f$ and $g$
are compatible if and only if $\left|n_f(x_0) - n_g(x_0)\right| \le 1$ for all $x_0 \in \mathbb{R}$ (see
3.5 in \cite{ChudnovskySeymour} or for a proof in the case of $\deg f = \deg g$, see \cite[Theorem 2']{Fell} or
\cite[Theorem~5.2]{Wag92}).
Secondly, it is immediate from \eqref{eq:roots-interlacing} that $f$ interlaces $g$
if and only if $0 \le n_g(x_0) - n_f(x_0) \le 1$ for
 all $x_0 \in \mathbb{R}$. In addition, we also have that
 $n_{xf}(x_0) = n_{f}(x_0) + \chi(x_0 \le 0)$. Since all roots of $f$ and $g$ are nonpositive, we may assume that $x_0 \le 0$.
 
Finally, it can easily be seen that the following two conditions are equivalent, which completes the proof. 

\begin{enumerate}[label={(\roman*)}]
\item $\left|n_f(x_0) - n_g(x_0)\right| \le 1$ and $\left|(n_f(x_0)+1) - n_g(x_0)\right| \le 1.$
\item $0 \le n_g(x_0) - n_f(x_0) \le 1.$
\end{enumerate}

\end{proof}


\begin{rem}
Some further results on the connection of interlacing and compatibility appeared recently in 
\cite{Liu12}.
\end{rem}

Lemma~\ref{lem:interlacing} together with Theorem~\ref{thm:compatible} implies
the following result of Haglund, Ono, and Wagner \cite[Lemma~8]{HOW99}.
\begin{cor} 
\label{cor:HOW}
Let $f_1, \dotsc, f_m \in \reals[x]$ be real-rooted polynomials with nonnegative
coefficients, and such that $f_i$ interlaces $f_j$ for all $1\le i < j \le m.$
Let $b_1, \dotsc, b_m \ge 0$ and $c_1, \dotsc, c_m \ge 0$ be such that 
$b_ic_{i+1} \leq c_ib_{i+1}$ for all $1 \le i \le m-1.$ Then 
$c_1f_1 + \dotsb + c_mf_m$ interlaces $b_1f_1 + \dotsb + b_mf_m.$
\end{cor}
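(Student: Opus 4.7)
The plan is to reverse-engineer Lemma~\ref{lem:interlacing}: I will express $h := b_1 f_1 + \cdots + b_m f_m$ as a conic combination of polynomials each of which is interlaced by $g := c_1 f_1 + \cdots + c_m f_m$, and then invoke the classical ``common interlacer'' principle. Throughout I write $p \preceq q$ for the interlacing relation defined in \eqref{eq:roots-interlacing}. The two facts I need are: (i) if $p \preceq q_1, \ldots, p \preceq q_r$, then $p \preceq \sum_i a_i q_i$ for all $a_i \ge 0$; and dually (ii) if $q_1 \preceq p, \ldots, q_r \preceq p$, then $\sum_i a_i q_i \preceq p$ for all $a_i \ge 0$. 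Both are standard and follow from Lemma~\ref{lem:interlacing} together with Lemma~\ref{lem:ChudnovskySeymour} once one knows that two polynomials sharing a common interlacer (from above or below) are compatible.

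After disposing of the degenerate cases by dropping indices where $c_i = 0$, set $\lambda_i := b_i / c_i$; the hypothesis $b_i c_{i+1} \le c_i b_{i+1}$ rearranges to $\lambda_1 \le \lambda_2 \le \cdots \le \lambda_m$. Introduce the tail sums $T_j := \sum_{i=j}^{m} c_i f_i$, so that $T_1 = g$. Summation by parts then yields the key decomposition
\[
h \;=\; \sum_{i=1}^{m} \lambda_i c_i f_i \;=\; \lambda_1 T_1 + \sum_{j=2}^{m} (\lambda_j - \lambda_{j-1})\, T_j ,
\]
which is a conic combination of the $T_j$'s, since every increment $\lambda_j - \lambda_{j-1}$ is nonnegative.

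The heart of the argument is to show $g \preceq T_j$ for each $j$. Applying (i) to the hypothesis $f_i \preceq f_k$ for $i < k$, every $f_i$ with $i < j$ interlaces every $f_k$ with $k \ge j$, and hence $f_i \preceq T_j$. Combined with the trivial $T_j \preceq T_j$, (ii) gives
\[
g \;=\; \sum_{i<j} c_i f_i \,+\, T_j \;\preceq\; T_j
\]
for every $j$. One final application of (i) to the decomposition of $h$ above then yields $g \preceq h$, as desired. The main obstacle will be verifying (i) and (ii) carefully, since the polynomials involved need not share a common degree; but the interlacing chain $f_1 \preceq \cdots \preceq f_m$ forces $\deg f_m \le \deg f_1 + 1$, so all the tails $T_j$ have degree equal to $\deg f_m$ and the degree bookkeeping stays uniform throughout.
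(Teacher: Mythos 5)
Your proposal is correct in its essentials and is a genuinely self-contained argument; note that the paper does not actually spell out a proof of Corollary~\ref{cor:HOW} --- it merely asserts that the result follows from Lemma~\ref{lem:interlacing} and Theorem~\ref{thm:compatible}. Your route is the classical one, close to the original in Haglund--Ono--Wagner: summation by parts using the increasing ratios $\lambda_i = b_i/c_i$ writes $h$ as a conic combination of the tails $T_j$, and the two common-interlacer closure principles give $g \preceq T_j$ for each $j$ and hence $g \preceq h$. The paper's intended route (were it carried out) would translate everything into compatibility via Lemma~\ref{lem:interlacing} and then verify the compatibility of $g,h$ and of $xg,h$ by a pairwise case analysis in the spirit of Theorem~\ref{thm:compatible}; your argument is more transparent for this particular statement, at the cost of importing the interlacer closure facts (i) and (ii), which --- contrary to what you suggest --- do not follow from Lemma~\ref{lem:interlacing} and Lemma~\ref{lem:ChudnovskySeymour} alone but require the ``compatible $\Leftrightarrow$ common interlacer'' equivalence from Chudnovsky--Seymour, which this paper never states.

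One step needs repair. ``Disposing of the degenerate cases by dropping indices where $c_i=0$'' does not work as written: if $c_i=0$ but $b_i>0$, dropping the index $i$ changes $h$, so the conclusion you obtain is for the wrong right-hand polynomial. In fact the hypotheses only force $c_{i+1}=0$ (from $b_i c_{i+1} \le c_i b_{i+1} = 0$), not $b_i = 0$, so nonzero $b_i$ can survive with $c_i = 0$. A correct handling is: after discarding indices with $b_i=c_i=0$, if some $c_i=0$ then the zeros of $c$ occur in blocks bounded on the left by an index with $b>0$ that forces all subsequent $c$'s in that block to vanish; one must then add the corresponding $f_i$ (with $i$ outside the support of $c$) to the list of polynomials interlaced by $g$ before the final application of (i). Related to this, your claim that ``all the tails $T_j$ have degree equal to $\deg f_m$'' is only automatic when the support of $c$ reaches $m$; the degree bookkeeping does close up, but via the observation that if $c_r$ is the last nonzero entry of $c$ and $b_r = 0$ then the constraints cascade to make $b_1=\dots=b_r=0$. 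In the core case $c_i>0$ for all $i$ your argument is complete and correct.
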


In fact, since the $\s$-Eulerian polynomials have all positive coefficients this implies that
the $n$th polynomial interlaces the $(n+1)$th.
\begin{thm}
For any sequence $\s$ of positive integers and any positive integer $n$, we have that
\[ \poly^{(\s)}_n(x) \quad \textrm{interlaces}\quad \poly^{(\s)}_{n+1}(x). \]
\end{thm}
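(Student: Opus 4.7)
The plan is to realize both $\E_n^{(\s)}(x)$ and $\E_{n+1}^{(\s)}(x)$ as conic combinations of one and the same pairwise-interlacing family of polynomials, and then to invoke Corollary~\ref{cor:HOW}. The key observation that makes this work is the identity
\[
\E_{n+1,0}^{(\s)}(x) \;=\; \E_n^{(\s)}(x),
\]
which comes immediately from the recurrence (\ref{eq:recurrencePni}): for $i=0$ we have $t_0 = \lceil 0 \rceil = 0$, so the $x$-weighted sum is empty, and what remains is $\sum_{h=0}^{s_n-1} \E_{n,h}^{(\s)}(x) = \E_n^{(\s)}(x)$. Combinatorially, appending a trailing $0$ to an $\s$-inversion sequence of length $n$ creates no new ascent at the last position.

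Combined with (\ref{Esum}) at level $n+1$, this lets me write
\[
\E_n^{(\s)}(x) \;=\; \E_{n+1,0}^{(\s)}(x) \qquad \text{and} \qquad \E_{n+1}^{(\s)}(x) \;=\; \sum_{j=0}^{s_{n+1}-1} \E_{n+1,j}^{(\s)}(x).
\]
The induction used to prove Theorem~\ref{thm:main} already established that the family $\bigl\{\E_{n+1,j}^{(\s)}\bigr\}_{j=0}^{s_{n+1}-1}$ satisfies conditions \ref{g1} and \ref{g2} of Theorem~\ref{thm:compatible}. Since these polynomials have nonnegative coefficients, Lemma~\ref{lem:interlacing} upgrades this to the statement that $\E_{n+1,i}^{(\s)}$ interlaces $\E_{n+1,j}^{(\s)}$ whenever $i<j$, so the family is of precisely the sort demanded by Corollary~\ref{cor:HOW}.

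I then apply that corollary with $f_j := \E_{n+1,j-1}^{(\s)}$ for $j = 1,\dots, s_{n+1}$, the weights $c_1 = 1$ and $c_j = 0$ for $j\ge 2$, and $b_j = 1$ for every $j$. The hypothesis $b_j c_{j+1} \le c_j b_{j+1}$ reduces to $0 \le 1$ at $j=1$ and $0 \le 0$ for $j \ge 2$, and is therefore trivial. The conclusion is exactly that $\sum_j c_j f_j = \E_{n+1,0}^{(\s)}(x) = \E_n^{(\s)}(x)$ interlaces $\sum_j b_j f_j = \E_{n+1}^{(\s)}(x)$, which is what was claimed. The only nontrivial step is spotting the identity $\E_{n+1,0}^{(\s)} = \E_n^{(\s)}$; once it is noticed, the rest is a routine assembly of machinery already developed in this section.
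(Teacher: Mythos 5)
Your proof is correct and follows essentially the same approach as the paper: both identify the key fact that $\E^{(\s)}_{n+1,0}(x) = \E^{(\s)}_n(x)$ and then invoke Corollary~\ref{cor:HOW} with $c_1=1$, $c_j=0$ for $j\ge 2$, $b_j=1$, and $f_j = \E^{(\s)}_{n+1,j-1}$. You spell out the verification of $t_0 = 0$ and the hypothesis check in Corollary~\ref{cor:HOW} more explicitly than the paper does, but the argument is the same.
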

\begin{proof}
By definition, $\poly^{(\s)}_n(x)$ has only nonnegative coefficients, so we can apply 
Corollary~\ref{cor:HOW}. Set $m = s_{n+1}$, $c_1 = 1$, $c_2 = \dotso = c_{m} = 0$, 
$b_1 = \dotso = b_{m} = 1$ and $f_i = \poly^{(\s)}_{n+1,i-1}(x)$ for all $1\le i \le m$
to get that $\poly^{(\s)}_{n}(x) = \poly^{(\s)}_{n+1,0}(x) = f_1$ interlaces 
$f_1 + \dotsb + f_m = \sum_{i=0}^{m-1} \poly^{(\s)}_{n+1,i}(x) = \poly^{(\s)}_{n+1}(x).$

\end{proof}

\section{Applications}
\label{sec:applications}

In this section, we show that Theorem~\ref{thm:main} contains as special cases 
several existing real-rootedness results on (generalized) Eulerian polynomials, as well as some results which appear to be new. In particular, we prove the real-rootedness of the Eulerian
polynomials of type $D$ in Subsection~\ref{subsec:coxeter}.

\subsection{Permutations}

We first show that Theorem~\ref{thm:main} implies the real-rootedness of the familiar
Eulerian polynomials, a result known since Frobenius \cite{Fro}.

For $\pi \in \Sn$, let $\D \pi$ be the descent set of $\pi$,
\[
\D \pi = \{ i \in \{1, \ldots, n-1\} \mid \pi_i > \pi_{i+1}\},
\]
and let $\inv \pi$ be the number of {\em inversions} of $\pi$:
\[
\inv \pi \ =  \left |\{(i,j) \mid 1 \leq i < j \leq n \ {\rm and} \ \pi_i > \pi_j \} \right |.
\]
We will make use of the following bijection between $\Sn$ and
$\I_n^{(1,2, \ldots, n)}$
which was proved in \cite[Lemma 1]{SS} to have the properties claimed.
\begin{lem}
The mapping $\phi: \Sn \rightarrow \I_{n}^{(1,2, \ldots, n)}$ defined by
$\phi(\pi) = \boldsymbol{t} = (t_1, t_2, \ldots, t_n)$ for $\pi = (\pi_1, \dotsc, \pi_n)$ as
\[
t_i \ = \ \left|\{j \in  \{1,2, \ldots, i-1\} \mid   \pi_j>\pi_i\}\right|
\]
is a bijection satisfying both $\D \pi   =  \A \boldsymbol{t}$ and 
$\inv \pi   =  |\boldsymbol{t}| = t_1+t_2+ \cdots + t_n$.
\label{Desinv}
\end{lem}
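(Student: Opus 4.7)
The plan is to verify the four assertions---well-definedness of $\phi$, the inversion identity, bijectivity, and the descent/ascent identity---in that order. Well-definedness is immediate: the set whose size defines $t_i$ sits inside $\{1,\ldots,i-1\}$, so $0 \le t_i \le i-1 < s_i$ for $\s = (1,2,\ldots,n)$. The inversion identity $\inv\pi = |\boldsymbol{t}|$ is a one-line double count, since both sides enumerate the pairs $(j,i)$ with $1 \le j < i \le n$ and $\pi_j > \pi_i$, the sum $\sum_i t_i$ simply grouping them by the second coordinate.

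For bijectivity, since $|\Sn| = n! = \prod_{i=1}^n s_i = |\I_n^{(1,2,\ldots,n)}|$, it suffices to exhibit an inverse. I would do this by downward induction on $i$: the equality $t_n = |\{j < n : \pi_j > \pi_n\}| = n - \pi_n$ (as $\{\pi_1,\ldots,\pi_{n-1}\} = \{1,\ldots,n\}\setminus\{\pi_n\}$) forces $\pi_n = n - t_n$; stripping off $\pi_n$ and applying the order-preserving relabeling of $\{1,\ldots,n\}\setminus\{\pi_n\}$ to $\{1,\ldots,n-1\}$, which leaves $t_1,\ldots,t_{n-1}$ invariant, reduces the problem to a permutation of length $n-1$ with Lehmer code $(t_1,\ldots,t_{n-1})$.

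The main content lies in the descent/ascent identity. First, $0 \notin \A\boldsymbol{t}$ automatically, because $t_1 = 0$ gives $t_1/s_1 = 0 = e_0/s_0$; hence $\A\boldsymbol{t} \subseteq \{1,\ldots,n-1\}$, matching the range of $\D\pi$. For $1 \le i \le n-1$ I must show that $\pi_i > \pi_{i+1}$ if and only if $(i+1)\,t_i < i\,t_{i+1}$. The technical move is to insert the auxiliary count $B = |\{j < i : \pi_j > \pi_{i+1}\}|$, so that $t_{i+1} = B + \chi(\pi_i > \pi_{i+1})$ while $t_i = |\{j < i : \pi_j > \pi_i\}|$. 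If $\pi_i > \pi_{i+1}$, then the inclusion $\{j < i : \pi_j > \pi_i\} \subseteq \{j < i : \pi_j > \pi_{i+1}\}$ gives $B \ge t_i$, hence $t_{i+1} \ge t_i + 1$; combining this with the elementary bound $t_i \le i - 1$ yields $i\,t_{i+1} \ge i(t_i+1) > (i+1)t_i$. Conversely, if $\pi_i < \pi_{i+1}$ the reverse inclusion gives $t_{i+1} = B \le t_i$, whence $i\,t_{i+1} \le i\,t_i \le (i+1)t_i$, ruling out the strict inequality.

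The main obstacle is this final equivalence, because $\pi_i > \pi_{i+1}$ is a discrete combinatorial condition while the ascent condition compares fractions with distinct denominators $i$ and $i+1$. The auxiliary count $B$ is what cleanly separates the ``switching'' contribution $\chi(\pi_i > \pi_{i+1})$ from the monotone comparison between $t_i$ and $B$, and the bound $t_i \le i-1$ is precisely what upgrades the non-strict inequality $t_{i+1} \ge t_i + 1$ into the strict inequality needed on the descent side.
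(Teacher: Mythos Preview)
Your proof is correct. Note, however, that the paper does not actually prove this lemma: it is stated with a citation to \cite[Lemma~1]{SS} and no proof environment follows. So there is no ``paper's own proof'' to compare against here.

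That said, your argument is exactly in the spirit of what the paper does elsewhere. Your key technical step---that for integers $0\le t_i \le i-1$ and $0\le t_{i+1}\le i$ one has $t_i/i < t_{i+1}/(i+1)$ if and only if $t_i < t_{i+1}$---is precisely the content of the paper's Lemma~\ref{lem:observation}, which is later used in the type-$B$ bijection of Theorem~\ref{bijection}. You establish the combinatorial equivalence $\pi_i>\pi_{i+1}\Leftrightarrow t_i<t_{i+1}$ via the auxiliary count $B$ and then handle the fraction comparison by the same inequality chain; the paper would phrase this as ``$t_i<t_{i+1}$ by Lemma~\ref{Desinv} (i.e., the cited [SS] result), and then apply Lemma~\ref{lem:observation}.'' Your proof is thus a clean, self-contained version of the argument the paper outsources.
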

\begin{cor}
For $n \geq 1$, the Eulerian polynomial,
\[
  A_n(x) =\sum_{\pi \in \Sn} x^{\des \pi}  ,
\]
has only real roots.
\end{cor}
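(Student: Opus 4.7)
The plan is to reduce this directly to Theorem~\ref{thm:main} via the bijection of Lemma~\ref{Desinv}. First I would specialize the sequence $\s$ to $\s=(1,2,3,\ldots,n)$, so that $\I_n^{(\s)}$ becomes the set of classical inversion sequences of length $n$, i.e., tuples $(t_1,\ldots,t_n)$ with $0 \le t_i < i$.

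Next I would invoke Lemma~\ref{Desinv}, which supplies a bijection $\phi:\Sn \to \I_n^{(1,2,\ldots,n)}$ with the property that $\D\pi = \A\phi(\pi)$. Taking cardinalities gives $\des\pi = \asc\phi(\pi)$ for every $\pi \in \Sn$. Summing $x^{\des\pi}$ over $\pi\in\Sn$ and reindexing the sum by $\e = \phi(\pi)$ therefore yields
\[
A_n(x) \;=\; \sum_{\pi \in \Sn} x^{\des\pi} \;=\; \sum_{\e \in \I_n^{(1,2,\ldots,n)}} x^{\asc\e} \;=\; \E_n^{(1,2,\ldots,n)}(x).
\]

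Finally I would apply Theorem~\ref{thm:main} to the sequence $\s=(1,2,\ldots,n)$ to conclude that $\E_n^{(1,2,\ldots,n)}(x)$, and hence $A_n(x)$, has only real roots. There is essentially no obstacle here: the entire content of the corollary is already packaged in the combination of the main theorem and the equidistribution statement of Lemma~\ref{Desinv}, and the only thing the proof must do is record that the classical Eulerian polynomial is the $\s$-Eulerian polynomial for this particular choice of $\s$.
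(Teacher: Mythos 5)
Your proof is correct and follows essentially the same route as the paper: identify $A_n(x)$ with $\E_n^{(1,2,\ldots,n)}(x)$ via Lemma~\ref{Desinv} and then invoke Theorem~\ref{thm:main}. You have simply spelled out the reindexing step more explicitly than the paper does.
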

\begin{proof}
By Lemma \ref{Desinv},
$A_n(x) 
= \E_n^{(1,2, \ldots, n)}(x),
$
which has all roots real by Theorem~\ref{thm:main}
with $\s=(1,2, \ldots, n)$.

\end{proof}

\subsection{Signed permutations}
\label{sec:signedpermutations}
Let $\Bn$ denote  the  hyperoctahedral group, whose elements
are the   signed permutations 
of $\{1,2, \ldots, n\}$.  Each $\sigma \in \Bn$ has the form
$(\pm \pi_1, \pm \pi_2, \ldots, \pm \pi_n)$ where $\pi = (\pi_1, \dotsc, \pi_n) \in \Sn$.

In defining the notion a of ``descent'' on $\Bn$, various orderings
 have been used in the literature.  
 In this subsection, we will assume the 
 \[-1 <_B -2 <_B \cdots <_B -n <_B 0 <_B 1 <_B 2 <_B \cdots <_B n\]
 ordering, since 
 it generalizes naturally to the wreath products discussed in the next subsection.
(For another ordering used for $\Bn$, see Subsection~\ref{subsec:coxeter}.)

Let $\sigma$ be a signed permutation of length $n$. 
An index $i \in \{0,1, \ldots, n-1\}$ is a {\em descent} of $\sigma$
if $\sigma_i >_B \sigma_{i+1}$, where $\sigma_0:=0$.
Let $\desB \sigma$ denote the number of
descents of $\sigma \in \Bn$.

There is 
a correspondence between statistics on signed permutations and statistics on 
inversion sequences $\I_n^{(\s)}$ with $\s = (2,4,6, \dotsc)$. The following was shown in \cite[eq.~(26)]{SS}.
\begin{lem}
\begin{equation}
\sum_{t \geq 0} (2t+1)^n x^t \ = \
\frac{\E_n^{(2,4,\ldots, 2n)}(x)}
{(1-x)^{n+1}}\,.
\label{signed_perms}
\end{equation}
\label{lem:signed_perms}
\end{lem}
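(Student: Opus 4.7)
The plan is to use the geometric interpretation of $\E_n^{(\s)}(x)$ given in the introduction, namely that $\E_n^{(\s)}(x)$ is the $\h^*$-polynomial of the lecture hall polytope $\PP_n^{(\s)}$. This is equivalent to the Ehrhart series identity
\[
\sum_{t \ge 0} \bigl|t\PP_n^{(\s)} \cap \integers^n\bigr|\, x^t = \frac{\E_n^{(\s)}(x)}{(1-x)^{n+1}}.
\]
Comparing this with the left-hand side of the statement, the lemma is equivalent to the Ehrhart-polynomial identity
\[
\bigl|t\PP_n^{(2,4,\dotsc,2n)} \cap \integers^n\bigr| = (2t+1)^n.
\]

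To prove this count, I would construct an explicit bijection between the integer points of $t\PP_n^{(2,4,\dotsc,2n)}$ and the set $\{0, 1, \dotsc, 2t\}^n$, whose cardinality is manifestly $(2t+1)^n$. The natural approach decomposes each coordinate in mixed-radix form, $\la_i = 2i\cdot q_i + r_i$ with $0 \le r_i < 2i$, so that the residues $(r_1, \dotsc, r_n)$ form an element of $\I_n^{(\s)}$ and the quotients $(q_1, \dotsc, q_n)$ form a weakly increasing sequence in $\{0, 1, \dotsc, t\}$. The lecture hall inequality $\la_i/(2i) \le \la_{i+1}/(2(i+1))$ then becomes: $q_i < q_{i+1}$, or $q_i = q_{i+1}$ and $i \notin \A(r_1, \dotsc, r_n)$, together with a boundary condition at $q_n = t$. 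One reassembles each admissible pair $(q, r)$ into a tuple in $\{0, 1, \dotsc, 2t\}^n$ using the ranking information encoded by the ascent set of $r$.

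Alternatively, and perhaps more transparently, one can first identify $\E_n^{(\s)}(x)$ with the type-$B$ Eulerian polynomial $B_n(x) = \sum_{\sigma \in \Bn} x^{\desB \sigma}$ via a statistic-preserving bijection $\Bn \to \I_n^{(\s)}$, then invoke the classical type-$B$ Worpitzky identity
\[
(2t+1)^n = \sum_{\sigma \in \Bn} \binom{t + n - \desB \sigma}{n}.
\]
The main obstacle in either route is verifying that the mixed-radix decomposition (or the bijection with $\Bn$) correctly tracks the ascent/descent statistic, including careful handling of the boundary cases at position $0$ and at $q_n = t$; once this is in place, the enumeration is routine.
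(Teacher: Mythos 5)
The paper itself does not prove this lemma; it is quoted directly from \cite[eq.~(26)]{SS}, so there is no in-paper argument to compare against. Your two suggested routes are both reasonable in outline, but neither is carried through, and the first one as written contains a concrete error.

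The reduction via the Ehrhart interpretation of $\E_n^{(\s)}(x)$ as the $\h^*$-polynomial of $\PP_n^{(\s)}$ is legitimate and correctly reduces the claim to $\bigl|t\PP_n^{(2,4,\dotsc,2n)} \cap \integers^n\bigr| = (2t+1)^n$. However, your mixed-radix analysis reverses the ascent condition. Writing $\la_i = 2iq_i + r_i$ with $0 \le r_i < 2i$, the inequality $\la_i/(2i) \le \la_{i+1}/(2(i+1))$ is $q_i + r_i/(2i) \le q_{i+1} + r_{i+1}/(2(i+1))$; since each fractional part lies in $[0,1)$ this is equivalent to ``$q_i < q_{i+1}$, or $q_i = q_{i+1}$ and $r_i/(2i) \le r_{i+1}/(2(i+1))$.'' The second alternative says $i$ is a \emph{weak ascent} of $r$, which is the \emph{opposite} of ``$i \notin \A(r)$'' (the latter means $r_i/(2i) \ge r_{i+1}/(2(i+1))$). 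For example, with $n=2$ and $r=(1,1)$ one has $r_1/2 = 1/2 > 1/4 = r_2/4$, so $1 \notin \A(r)$, yet taking $q_1 = q_2$ gives $(2q_1+1)/2 > (4q_1+1)/4$, violating the lecture hall inequality; your stated criterion would admit these points. Beyond this, the ``reassembly'' into $\{0,\dotsc,2t\}^n$ is the entire content of the count --- it amounts to a bijective proof of the type-$B$ Worpitzky identity --- and you have not constructed it; it is also more delicate than the sketch suggests because the per-residue count is \emph{not} simply $\binom{t+n-\asc r}{n}$ (try $r=(1,1)$ versus $r=(1,3)$ with $n=2$).

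Your second route is the cleaner one. If you construct a descent-to-ascent bijection $\Bn \to \I_n^{(2,4,\dotsc,2n)}$ --- this is exactly the $k=2$ case of the map $\Theta$ in Lemma~\ref{lem:theta}, or can be extracted from parts~\ref{cond:typeB} and \ref{cond:typeA} of Theorem~\ref{bijection} --- you get $\E_n^{(2,4,\dotsc,2n)}(x) = B_n(x)$, and then the classical type-$B$ Worpitzky identity (which in generating-function form is precisely Brenti's identity \eqref{steingrimsson}) finishes the proof. This reverses the logical order the paper uses in Corollary~\ref{cor:typeB} without circularity, but you must actually exhibit and verify the bijection rather than assert it, since that verification is the whole substance of the lemma.
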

On the other hand, the infinite series in (\ref{signed_perms}) was shown  
by Brenti in \cite[Theorem~3.4]{brenti94}
to satisfy:
\begin{equation}
\sum_{t \geq 0} (2t+1)^n x^t \ = \
\frac{\sum_{\sigma \in \Bn}x^{\desB(\sigma)}}
{(1-x)^{n+1}}\,.
\label{steingrimsson}
\end{equation}
So, we have the following result, originally due to
Brenti \cite[Corollary~3.7]{brenti94}.
\begin{cor}
\label{cor:typeB}
The descent polynomial for signed permutations,
\[
B_n(x)  \ := \  \sum_{\sigma \in \Bn}x^{\desB(\sigma)},
\]
has all real roots.
\end{cor}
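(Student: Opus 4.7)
The plan is to recognize that Corollary~\ref{cor:typeB} is essentially immediate from what has already been established, and merely requires matching up two expressions for the same rational generating function. The key observation is that both $\E_n^{(2,4,\ldots,2n)}(x)/(1-x)^{n+1}$ and $B_n(x)/(1-x)^{n+1}$ have been identified as the generating function $\sum_{t \geq 0} (2t+1)^n x^t$: the first via Lemma~\ref{lem:signed_perms} (which comes from \cite{SS}) and the second via Brenti's identity \eqref{steingrimsson}.

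First, I would equate the right-hand sides of \eqref{signed_perms} and \eqref{steingrimsson}, and clear the common denominator $(1-x)^{n+1}$, to conclude the polynomial identity
\[
B_n(x) \;=\; \E_n^{(2,4,\ldots,2n)}(x).
\]
This reduces the problem of real-rootedness of $B_n(x)$ to real-rootedness of an $\s$-Eulerian polynomial for a concrete choice of $\s$.

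Second, I would apply the main result Theorem~\ref{thm:main} with $\s = (2,4,6,\ldots,2n)$, which is a sequence of positive integers, and conclude that $\E_n^{(2,4,\ldots,2n)}(x)$ has only real roots. Combined with the identity from the first step, this gives that $B_n(x)$ has only real roots, completing the proof.

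There is no serious obstacle here: the real work has been done in establishing Theorem~\ref{thm:main} and in the two generating-function identities cited, and the entire proof consists of composing these ingredients. The only point worth noting is that Lemma~\ref{lem:signed_perms} and Brenti's formula must refer to the same descent statistic (the one defined using the ordering $<_B$ with $\sigma_0 := 0$), which is indeed the convention adopted in Subsection~\ref{sec:signedpermutations}, so the identification of the two numerators is legitimate.
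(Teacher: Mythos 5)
Your proposal is correct and matches the paper's own proof essentially verbatim: both equate the two forms of $\sum_{t\ge 0}(2t+1)^n x^t$ from \eqref{signed_perms} and \eqref{steingrimsson} to get $B_n(x) = \E_n^{(2,4,\ldots,2n)}(x)$, then invoke Theorem~\ref{thm:main} with $\s=(2,4,\ldots,2n)$.
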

\begin{proof}
Combining \eqref{signed_perms} and \eqref{steingrimsson},
$B_n(x) 
\ = \ \E_n^{(2,4,\ldots, 2n)}(x)$.
The result follows with $\s=(2,4,\ldots, 2n)$ from Theorem~\ref{thm:main}.

\end{proof}

\subsection{$k$-colored permutations}

For a positive integer $k$,
the wreath product $\w$,
of a cyclic group, $\integers_k$, of order $k$, and the symmetric group $\Sn$,
generalizes both $\Sn$ (the case $k=1$) and $\Bn$ ($k=2$).
We regard $\w$ 
as the set of $k$-colored permutations, or as pairs $(\pi, \xi)$,
written as
\[
\ps \ = \
(\pi_1^{\xi_1}, \pi_2^{\xi_2}, \ldots, \pi_n^{\xi_n}),
\]
 where $\pi=(\pi_1, \ldots, \pi_n) \in \Sn$ and
$\xi = (\xi_1, \ldots, \xi_n) \in \{0,1, \ldots, k-1\}^n$.

The {\em descent set} of $\ps \in \w$ is
\begin{align}
\D \ps  & \ = \ \{i \in \{0, \ldots, n-1\} \mid \xi_i < \xi_{i+1} \  {\rm or} \  \xi_i = \xi_{i+1} \ {\rm and} \ \pi_i > \pi_{i+1}\},
\label{des}
\end{align}
with the convention that $\pi_0=\xi_0=0$.
Let $\des \ps =\left|\D \ps\right|$ denote the number of descents.
Note that this definition of $\des$ agrees with $\des$ on $\Sn$ when $k=1$,
and with $\desB$ on $\Bn$ when $k=2$.
The descent polynomial for $G_{n,k} := \w$ is defined analogously as
\[
G_{n,k}(x) \ :=  \ \sum_{\ps \in \w} x^{\des \ps }.
\]

As we now describe, the statistics on $\w$ are related to
statistics on $\s$-inversion sequences, 
$\I_n^{(\s)}$, with $\s=(k,2k, \ldots, nk)$.  
The following bijection was proven in \cite[Theorem 3]{PS2} to
map the descent set on colored permutations, $\w$, 
to the ascent set on inversion sequences $\I_n^{(k,2k, \ldots, nk)}$.
\begin{lem}
 For each pair $(n,k)$ with $n \geq 1$, $k \geq 1$,
define
\[
\Theta: \ \ \w \ \longrightarrow \ \I_n^{(k,2k, \ldots, nk)}
\]
by
\begin{equation}
\e \ = \ \Theta(\pi_1^{\xi_1}, \pi_2^{\xi_2}, \ldots, \pi_n^{\xi_n})\\
\ =  \
(\xi_1 + t_1,\   2 \xi_2 + t_2,\    \ldots,\   n \xi_n + t_n),
\end{equation}
where $(t_1, t_2, \ldots, t_n) = \phi(\pi)$, for $\phi$ defined on $\Sn$ as in  Lemma \ref{Desinv}.

Then
\[
 \A \e  \  = \  \D \ps.
\]
\label{lem:theta}
\end{lem}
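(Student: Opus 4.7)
My plan is to decompose the proof into three parts: well-definedness, bijectivity, and the descent/ascent correspondence.

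First I would verify that $\Theta$ indeed lands in $\I_n^{(k,2k,\ldots,nk)}$. Writing $e_i = i\xi_i + t_i$, the bounds $0 \le \xi_i \le k-1$ and (by Lemma~\ref{Desinv}, since $\phi(\pi)\in \I_n^{(1,2,\ldots,n)}$) $0 \le t_i \le i-1$ give $0\le e_i \le i(k-1)+(i-1) = ik-1$. So each $e_i$ lies in $\{0,1,\dotsc,ik-1\}$ as required.

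Next, bijectivity follows because the equation $e_i = i\xi_i + t_i$ with $0 \le t_i < i$ is just Euclidean division by $i$: $\xi_i = \lfloor e_i/i \rfloor$ and $t_i = e_i \bmod i$ are uniquely determined by $e_i$. Hence we can recover $\xi$ coordinate-by-coordinate from $\e$, and we can recover $\pi$ from $(t_1,\ldots,t_n)$ via $\phi^{-1}$, which exists by Lemma~\ref{Desinv}.

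The heart of the proof is showing $\A\e = \D\ps$, which I would handle by fixing $i\in\{0,1,\dotsc,n-1\}$ and rewriting the ascent condition. Using $s_i=ik$, the condition $e_i/s_i < e_{i+1}/s_{i+1}$ simplifies, after multiplying by $k$, to
\[
\xi_i + \frac{t_i}{i} \;<\; \xi_{i+1} + \frac{t_{i+1}}{i+1},
\]
with the convention $\xi_0=0$, $t_0=0$ at the boundary. Since $0 \le t_i/i < 1$ and $0 \le t_{i+1}/(i+1) < 1$, I would split into three cases. If $\xi_i < \xi_{i+1}$, the inequality holds automatically and $i \in \D\ps$ by definition. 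If $\xi_i > \xi_{i+1}$, the inequality fails and $i\notin \D\ps$. If $\xi_i=\xi_{i+1}$, the inequality reduces to $t_i/i < t_{i+1}/(i+1)$, which is precisely the ascent condition for $\phi(\pi)\in\I_n^{(1,2,\ldots,n)}$; by Lemma~\ref{Desinv} this is equivalent to $\pi_i>\pi_{i+1}$, matching the definition of descent at position $i$ in $\D\ps$ when $\xi_i=\xi_{i+1}$.

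The main subtlety, and where I expect the only real bookkeeping, is the boundary case $i=0$: one must check that the convention $e_0=0$, $s_0=1$ together with $\pi_0=\xi_0=0$ yields consistent behavior. Here $0\in\A\e$ iff $e_1 > 0$ iff $\xi_1+t_1>0$; since $t_1=0$ (because $s_1=1$ forces $t_1=0$), this is iff $\xi_1>0$, which coincides with $0\in\D\ps$ because $\pi_1 \ge 1 > 0 = \pi_0$ rules out the second clause in \eqref{des} when $\xi_1=0$. Aside from this bookkeeping, everything else is routine.
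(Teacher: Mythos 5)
The paper does not actually prove this lemma; it simply cites \cite[Theorem 3]{PS2}, so there is no in-paper argument to compare against. Your proof is correct and self-contained: the well-definedness and bijectivity parts are just the observation that $e_i = i\xi_i + t_i$ is Euclidean division by $i$, and the ascent/descent correspondence correctly splits on the sign of $\xi_{i+1} - \xi_i$, reducing the equality case to Lemma~\ref{Desinv}. The one small slip is cosmetic: when you write ``$s_1 = 1$ forces $t_1 = 0$'' you are referring to the sequence $(1,2,\dotsc,n)$ governing the codomain of $\phi$, not the $\s=(k,2k,\dotsc,nk)$ of the present lemma, for which $s_1=k$; it would be clearer to say $t_1 \in \I_n^{(1,2,\dotsc,n)}$ forces $0 \le t_1 < 1$. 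Likewise, for $i=0$ you cannot literally write $s_i = ik$ (since $s_0=1$ by convention, not $0$), but you do handle $i=0$ separately, so the argument is sound.
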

The following result is originally due to Steimgr\'{i}msson \cite[Theorem~3.19]{Stein}.
\begin{cor}
 For each pair $(n,k)$ with $n \geq 1$, $k \geq 1$, the descent polynomial
of $\w$ has all roots real.
\end{cor}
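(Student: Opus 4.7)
The plan is to reduce the statement directly to Theorem~\ref{thm:main} via the bijection $\Theta$ supplied by Lemma~\ref{lem:theta}. Since the descent statistic $\des \ps$ is by definition the cardinality of $\D \ps$, and the ascent statistic $\asc \e$ is the cardinality of $\A \e$, the identity $\A(\Theta(\ps)) = \D \ps$ from Lemma~\ref{lem:theta} implies $\asc(\Theta(\ps)) = \des \ps$ for every $\ps \in \w$.

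First, I would use this statistic-preserving bijection, together with the fact that $\Theta$ is a bijection between $\w$ and $\I_n^{(k,2k,\ldots,nk)}$, to rewrite the generating polynomial:
\[
G_{n,k}(x) \;=\; \sum_{\ps \in \w} x^{\des \ps} \;=\; \sum_{\e \in \I_n^{(k,2k,\ldots,nk)}} x^{\asc \e} \;=\; \E_n^{(k,2k,\ldots,nk)}(x).
\]
Next, I would invoke Theorem~\ref{thm:main} with the particular sequence $\s = (k, 2k, \ldots, nk)$ (which is a sequence of positive integers, as required) to conclude that $\E_n^{(k,2k,\ldots,nk)}(x)$, and hence $G_{n,k}(x)$, has only real roots.

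There is essentially no obstacle here: the nontrivial content sits in Lemma~\ref{lem:theta} (already established) and in Theorem~\ref{thm:main} (the main theorem, already proved). The corollary is a one-line consequence. The only thing worth noting in the write-up is that the cases $k=1$ and $k=2$ recover the earlier corollaries for $\Sn$ and $\Bn$, consistent with the fact that $\s = (1,2,\ldots,n)$ and $\s = (2,4,\ldots,2n)$ are exactly the sequences used in those subsections.
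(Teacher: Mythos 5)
Your proof is correct and matches the paper's argument exactly: both use the bijection $\Theta$ from Lemma~\ref{lem:theta} to identify $G_{n,k}(x)$ with $\E_n^{(k,2k,\ldots,nk)}(x)$ and then invoke Theorem~\ref{thm:main}. The extra remark about recovering the $\Sn$ and $\Bn$ cases is accurate but not part of the proof.
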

\begin{proof}
By Lemma \ref{lem:theta},
$G_{n,k}(x) \ = \ 
 \E_n^{(k,2k, \ldots, nk)}(x)$,
so the result follows from  Theorem~\ref{thm:main}
with $\s=(k,2k, \ldots, nk)$.

\end{proof}

In Section~\ref{sec:q-analogs}, we will use the fact that the bijection  $\Theta$
of Lemma \ref{lem:theta} relates other  statistics
of $\w$ and $\I_n^{(k,2k, \ldots, nk)}$,  to
show that several $q$-analogs of $G_{n,k}(x)$ are real-rooted for all positive $q$, settling some open questions.

\subsection{Finite Coxeter groups}
\label{subsec:coxeter}
The symmetric  group and the hyperoctahedral group are examples of finite Coxeter groups.
The descent statistic can be extended to all such groups (with the appropriate choice
of generators), and hence one can define
the Eulerian polynomials for all finite Coxeter groups, sometimes called Coxeter
systems (see \cite{BB05, brenti94}). 
Brenti showed that these polynomials have only real roots for type $B$ (see 
Corollary~\ref{cor:typeB}) and the exceptional 
groups and conjectured that this is the case in general \cite[Conjecture~5.2]{brenti94}. 
\begin{conj} 
\label{conj:Coxeter}
The Eulerian polynomials for all finite Coxeter groups have only real roots. 
\end{conj}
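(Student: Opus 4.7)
The plan is to reduce to irreducible finite Coxeter groups and then dispatch each irreducible type. Every finite Coxeter group is a direct product of irreducible ones, and since descents decompose coordinatewise under the standard Coxeter generators, the Eulerian polynomial of a product factors as the product of the Eulerian polynomials of the factors. Because a product of real-rooted polynomials with nonnegative coefficients is real-rooted, it suffices to verify the conjecture for each irreducible finite Coxeter system, namely types $A_n$, $B_n$, $D_n$, the exceptional types $E_6, E_7, E_8, F_4, H_3, H_4, G_2$, and the dihedral family $I_2(m)$.

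Types $A_n$ and $B_n$ are already dispatched by the corollaries to Theorem~\ref{thm:main}, in particular Corollary~\ref{cor:typeB} for type $B$ (with $\s=(2,4,\ldots,2n)$). Brenti verified the exceptional Eulerian polynomials by direct computation in \cite{brenti94}, and the dihedral polynomials $I_2(m)$ are quadratic, hence trivially real-rooted. Thus the only substantive remaining case is type $D_n$.

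For $D_n$ my plan is to combine a classical combinatorial decomposition with the interlacing machinery of Section~\ref{sec:main}. The group $D_n$ sits as the index-$2$ subgroup of $B_n$ consisting of signed permutations with an even number of negative entries, and the type $D$ descent set agrees with the type $B$ descent set except at position $0$, where in type $D$ it is defined using $\sigma_1+\sigma_2$ instead of $\sigma_1$. A careful bookkeeping of these two differences should yield an identity of the form
\[
D_n(x) \;=\; B_n(x) \;-\; c_n\, x\, R_{n-1}(x),
\]
where $c_n>0$ is an explicit constant and $R_{n-1}(x)$ is a polynomial with nonnegative coefficients that is naturally realizable as an $\s$-Eulerian polynomial---or, failing that, as a conic combination of the refined polynomials $\E^{(\s)}_{n-1,i}(x)$---for a sequence $\s$ closely related to $(2,4,\ldots,2(n-1))$.

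The crucial step, and the main obstacle, is to prove that $xR_{n-1}(x)$ \emph{interlaces} $B_n(x)=\E_n^{(2,4,\ldots,2n)}(x)$. Once established, Obreschkoff's theorem (Satz~5.2 of \cite{obreschkoff}) implies that \emph{every} real linear combination of these two polynomials, including with a negative coefficient, is real-rooted; in particular $D_n(x)$ is real-rooted. To obtain the interlacing I would appeal to Lemma~\ref{lem:interlacing} and Corollary~\ref{cor:HOW}, using the fact established in the proof of Theorem~\ref{thm:main} that the refined family $\{\E^{(\s)}_{n,i}(x)\}_i$ is pairwise compatible, and invoking the full strength of Theorem~\ref{thm:compatible} to propagate the required compatibility/interlacing relations through the conic combinations that describe $xR_{n-1}(x)$ and $B_n(x)$.
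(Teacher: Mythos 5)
Your reduction to irreducible types and the dispatch of $A_n$, $B_n$, the exceptionals, and $I_2(m)$ are sound and mirror what Brenti already established; the substantive issue is entirely in your treatment of type $D$, where you diverge from the paper and leave a genuine gap. Your proposed identity $D_n(x) = B_n(x) - c_n\,x\,R_{n-1}(x)$ does in fact exist (Brenti showed $D_n(x) = B_n(x) - n2^{n-1}x A_{n-1}(x)$), but the crux of your argument---that $x R_{n-1}(x)$ interlaces $B_n(x)$---is asserted, not proved, and it does \emph{not} follow from the machinery of Section~\ref{sec:main}. Theorem~\ref{thm:compatible} and Corollary~\ref{cor:HOW} propagate compatibility/interlacing \emph{within} a single refined family $\{\E^{(\s)}_{n,i}\}_i$ built from one fixed $\s$; they say nothing about interlacing between polynomials that come from two different $\s$-sequences (here $(1,2,\ldots,n-1)$ for $A_{n-1}$ versus $(2,4,\ldots,2n)$ for $B_n$). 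Furthermore Obreschkoff's criterion is an if-and-only-if for \emph{pairs}, so pairwise compatibility of each of $xR_{n-1}$ and $B_n$ with members of their own families gives you nothing about the cross-pair you actually need. Without an independent proof of that single interlacing, the argument does not close.

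The paper takes a different and self-contained route that avoids the subtraction entirely. It works with $T_n(x) = \sum_{\sigma\in\Bn} x^{\des_D \sigma} = 2D_n(x)$ to sidestep the parity constraint, constructs an explicit bijection $\psi:\Bn\to\I^{(2,4,\ldots,2n)}_n$ (Theorem~\ref{bijection}) under which type $D$ descents become a modified ``type $D$ ascent'' statistic on inversion sequences, and then derives a recurrence (Lemma~\ref{lem:T-recurrence}) for refinements $T_{n,i}(x)$ of exactly the form required by Theorem~\ref{thm:compatible}. The subtlety, which your plan would also have encountered in some form, is that the base cases $n=2,3$ fail the compatibility hypotheses; one must compute up to $n=4$ explicitly, verify interlacing there by inspecting the roots, and only then run the induction. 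This is the step your proposal glosses over by appealing to ``propagation'' of interlacing; the paper has to pay for it with a concrete finite check.
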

Brenti also showed by a simple argument that it is enough to check this for irreducible 
finite Coxeter groups. Combining this with the above results reduced 
Conjecture~\ref{conj:Coxeter} to the case of even-signed permutations \cite[Conjecture~5.1]{brenti94}.
\begin{conj} 
\label{conj:typeD}
The Eulerian polynomials of type $D$ have only real roots. 
\end{conj}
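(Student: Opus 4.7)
The plan is to reduce the real-rootedness of $D_n(x)$ to an interlacing statement that can be attacked by the compatibility/refinement machinery of Section~\ref{sec:main}. The starting point is Stembridge's classical identity
\[
D_n(x) = B_n(x) - n \cdot 2^{n-1} \cdot x \cdot A_{n-1}(x).
\]
Both polynomials on the right-hand side are already known to be real-rooted with nonnegative coefficients: $B_n(x) = \E_n^{(2,4,\ldots,2n)}(x)$ by Corollary~\ref{cor:typeB}, and $A_{n-1}(x) = \E_{n-1}^{(1,2,\ldots,n-1)}(x)$ by the analogous corollary for permutations, both as consequences of Theorem~\ref{thm:main}. By Obreschkoff's theorem (quoted in Section~\ref{sec:main}), to conclude that $D_n(x)$ is real-rooted it therefore suffices to show that $xA_{n-1}(x)$ interlaces $B_n(x)$, since then any real linear combination of the two --- including the relevant difference --- has only real roots.

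To establish this interlacing, I would invoke Lemma~\ref{lem:interlacing}, which (since $A_{n-1}$ and $B_n$ have nonnegative coefficients) reduces interlacing to the compatibility of both pairs $\{A_{n-1}(x), B_n(x)\}$ and $\{xA_{n-1}(x), B_n(x)\}$. The natural way to prove such compatibilities is through the refinement framework: recall the refined polynomials $\poly^{(\s)}_{n,i}(x)$ from \eqref{eq:Pni}. I would try to show that the refinement $\poly^{(2,4,\ldots,2n)}_{n,j}(x)$ of $B_n(x)$ together with $xA_{n-1}(x)$ (or rather a collection of polynomials that expresses both $xA_{n-1}$ and each $\poly^{(2,4,\ldots,2n)}_{n,j}$ as conic combinations) forms a pairwise-compatible family. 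Once this cross-sequence compatibility is in hand, summing over $j$ in the second slot recovers the interlacing of $xA_{n-1}(x)$ with $B_n(x)$, and Obreschkoff's theorem finishes the argument.

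The main obstacle is exactly this cross-sequence step: Theorem~\ref{thm:compatible} propagates compatibility \emph{within} a single $\s$-family, but $A_{n-1}$ and $B_n$ arise from different $\s$-sequences of different lengths. To bridge them, one natural attempt is to embed both refinement hierarchies into a common extended recurrence---for example by padding $(1,2,\ldots,n-1)$ to length $n$ and interleaving its refinement with that of $(2,4,\ldots,2n)$---and to verify inductively that the conditions \ref{f1} and \ref{f2} of Theorem~\ref{thm:compatible} are preserved at every step. Verifying conditions \ref{f1} and \ref{f2} across sequences is exactly where the technical work lies; the compatibility has to be strong enough to survive the mixing of refinements at different scales $s_i$ and $2s_i$. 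If this unified recurrence is too rigid, a fallback is to argue the interlacing directly from the generating function identities \eqref{signed_perms} and \eqref{steingrimsson}, or to identify an intermediate family of polynomials that simultaneously refines both $A_{n-1}$ and $B_n$ while remaining inside the scope of Theorem~\ref{thm:compatible}.
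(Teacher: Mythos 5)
Your reduction starts from Brenti's identity $D_n(x) = B_n(x) - n\,2^{n-1}\,x\,A_{n-1}(x)$, which is correct, but the interlacing you then try to establish is \emph{false}, so the plan cannot be completed. Take $n=2$: $xA_1(x)=x$ has root $0$, while $B_2(x)=1+6x+x^2$ has roots $-3\pm 2\sqrt 2 \approx -0.17,\,-5.83$, both strictly negative, so $x$ does not interlace $B_2$ in the sense of \eqref{eq:roots-interlacing}. Equivalently, by Obreschkoff's theorem (in the ``only if'' direction), there must exist a real linear combination with non-real roots, and indeed $-6\cdot x + 1\cdot B_2(x) = 1+x^2$ has none. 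More tellingly, $B_2(x) - c\,x\,A_1(x) = 1+(6-c)x+x^2$ has real roots precisely when $c\le 4$ or $c\ge 8$, and the relevant constant is $c = n\,2^{n-1}=4$, sitting exactly on the boundary. This is the real reason the type~$D$ conjecture was hard: the decomposition through $B_n$ and $A_{n-1}$ is tight with zero slack, so no interlacing (or compatibility) argument between $B_n$ and $xA_{n-1}$ can work, and the ``main obstacle'' you identify is not merely technical but an obstruction in principle.

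The paper sidesteps the $B_n - n\,2^{n-1}\,x\,A_{n-1}$ route entirely. It extends the type~$D$ descent statistic to all of $\Bn$, proves via Lemma~\ref{lem:involution} that $T_n(x)=\sum_{\sigma\in\Bn}x^{\des_D\sigma}=2D_n(x)$, constructs a bijection $\psi:\Bn\to\I_n^{(2,4,\dotsc,2n)}$ (Theorem~\ref{bijection}) matching $\des_D$ to a tailored ascent statistic $\asc_D$ on inversion sequences, and then sets up the refinement $T_{n,i}(x)$ with a recurrence of exactly the shape required by Theorem~\ref{thm:compatible} (Lemma~\ref{lem:T-recurrence}). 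The subtlety is that the hypotheses of Theorem~\ref{thm:compatible} fail for $n=2,3$ (the refined polynomials are not even pairwise compatible there), so the induction is bootstrapped by an explicit root check of $T_{4,0},\dotsc,T_{4,7}$ and then propagated for $n\ge 4$. If you want to salvage your idea, you would have to replace the interlacing of $xA_{n-1}$ against $B_n$ by a refined interlacing of an intermediate family, which is essentially what the paper's $T_{n,i}$ accomplish; but as stated your key step asks for something that is simply not true.
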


In this subsection, we give the first proof of Conjecture~\ref{conj:typeD}.
To be precise, we view
the Coxeter group of type $B$ (resp.~$D$) of rank $n$, denoted by $\Bn$ (resp.~$\Dn$),
as the set of signed (resp. even-signed) permutations of the set $\{1, \dotsc, n\}$.
The type $B$ and $D$ descents have the following simple combinatorial interpretation
(see \cite{brenti94, BB05}).
For a signed (resp. even-signed) permutation $\sigma$ given in its ``window notation" 
$(\sigma_1, \dotsc, \sigma_n)$, let
\begin{align}
\D_B \sigma &= \{ i \in \{1,\dotsc, n-1\} \mid \sigma_i > \sigma_{i+1}\} \cup
\{0 \mid \mathrm{if}\; \sigma_1 < 0\},
\label{def:B-des}
\\
\D_D \sigma &=  \{ i \in \{1,\dotsc, n-1\} \mid \sigma_i > \sigma_{i+1}\} \cup
\{0 \mid \mathrm{if}\; \sigma_1+\sigma_2 < 0\}.
\label{eq:Ddes}
\end{align}

Let us start with a simple observation.  Note that the  type $D$ 
descent statistic, $\D_D$, defined in (\ref{eq:Ddes}) can be extended 
to all signed permutations. Furthermore, $\D_D$ is equidistributed over
even-signed and odd-signed permutations. In other words, we have the following equality.
\begin{lem} For $n\ge 2,$
\[
\sum_{\sigma \in \Bn} x^{\des_D \sigma} = 2 \sum_{\sigma \in \Dn}x^{\des_D \sigma}.
\]
\label{lem:involution}
\end{lem}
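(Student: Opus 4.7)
The plan is to construct a sign-changing involution on $\Bn$ that preserves $\des_D$. Since the even-signed permutations $\Dn$ form an index-two subgroup of $\Bn$, producing an involution $\iota : \Bn \to \Bn$ that maps $\Dn$ bijectively onto $\Bn \setminus \Dn$ and satisfies $\des_D \iota(\sigma) = \des_D \sigma$ immediately gives
\[
\sum_{\sigma \in \Bn} x^{\des_D \sigma} = \sum_{\sigma \in \Dn} x^{\des_D \sigma} + \sum_{\sigma \in \Bn \setminus \Dn} x^{\des_D \sigma} = 2 \sum_{\sigma \in \Dn} x^{\des_D \sigma}.
\]

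The involution I would use is the simplest sign-changing map available: flip the sign of the first entry. Define $\iota(\sigma) = \bar\sigma$ where $\bar\sigma_1 = -\sigma_1$ and $\bar\sigma_i = \sigma_i$ for $i \ge 2$. Clearly $\iota$ is an involution and changes the total number of negative entries by exactly one, so $\iota(\Dn) = \Bn \setminus \Dn$. This is where the hypothesis $n \ge 2$ enters, so that $\sigma_2$ exists and the comparisons below make sense.

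The main verification is that $\des_D$ is preserved. For any $i \in \{2, \dotsc, n-1\}$, the comparison $\bar\sigma_i > \bar\sigma_{i+1}$ coincides with $\sigma_i > \sigma_{i+1}$, so those descents are unchanged. The interesting observation is that the two ``front'' descent conditions at positions $0$ and $1$ swap under $\iota$: one checks that
\[
1 \in \D_D\bar\sigma \iff -\sigma_1 > \sigma_2 \iff \sigma_1 + \sigma_2 < 0 \iff 0 \in \D_D\sigma,
\]
\[
0 \in \D_D\bar\sigma \iff -\sigma_1 + \sigma_2 < 0 \iff \sigma_1 > \sigma_2 \iff 1 \in \D_D\sigma.
\]
Consequently $|\D_D \bar\sigma| = |\D_D \sigma|$, which is exactly $\des_D \bar\sigma = \des_D \sigma$.

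I do not expect any serious obstacle here; the only subtlety is recognizing that the artificial-looking condition $\sigma_1 + \sigma_2 < 0$ in the definition of $\des_D$ is designed precisely so that flipping the sign of $\sigma_1$ transposes the roles of the indices $0$ and $1$ while leaving the rest of the descent structure intact. The identity in the lemma then follows by summing $x^{\des_D \sigma}$ over the orbits of $\iota$.
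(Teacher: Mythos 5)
Your proof is correct. However, it uses a genuinely different involution than the paper. The paper's involution locates the entry $\sigma_k$ with $|\sigma_k| = 1$ and flips its sign, which works because replacing $\pm 1$ by $\mp 1$ leaves \emph{every individual} condition defining $\D_D$ unchanged (each comparison $\sigma_i > \sigma_{i+1}$ involving the value $\pm 1$ and the condition $\sigma_1 + \sigma_2 < 0$ keep the same truth value, since the other entry in any comparison has absolute value at least $2$). Your involution instead always flips the sign of $\sigma_1$, and the preservation of $\des_D$ is not termwise: the descent conditions at positions $0$ and $1$ literally swap truth values under the map, while positions $2, \dots, n-1$ are untouched. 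This makes the ``designed'' symmetry of the type-$D$ descent at position $0$ especially transparent, at the small cost of needing to exhibit the swap explicitly rather than argue that nothing changes. Both routes are clean; yours arguably reveals more about why the condition $\sigma_1 + \sigma_2 < 0$ is natural, while the paper's is slightly shorter to state and requires no rearranging of descent positions. Either way, the $n \ge 2$ hypothesis is used in the same spot (to have $\sigma_2$ available), and the final bijection $\Dn \to \Bn \setminus \Dn$ together with $\des_D$-preservation gives the identity.
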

\begin{proof}
The involution on $\Bn$ that swaps the values $1$ and $-1$ in (the window notation of) 
$\sigma\in\Bn$ is a bijection between $\Dn$ and $\Bn\setminus \Dn$ that preserves 
the type $D$ descent statistic whenever $n\ge 2$.

\end{proof}

Therefore, in order to avoid dealing with the parity of the signs and to allow for simpler 
recurrences, we will be working instead with the polynomial 
\begin{align} 
\label{Tdef}
T_n(x) & = \sum_{\sigma \in \Bn}x^{\des_D \sigma}. 
\end{align} 
Clearly, $T_n(x)$ has all roots real if and only if $D_n(x)$ does (even for $n=1$, in the trivial case not covered
by Lemma~\ref{lem:involution}, since $T_1(x) = x+1$ and $D_1(x)=1$). This observation 
allows us to focus our attention on signed permutations,
with the goal of showing $T_n(x)$ has all real roots.

We will prove the following inversion sequence representation of $T_n(x)$.
For $\e = (e_1, \dotsc, e_n) \in \I^{(2,4,\dots, 2n)}_n$, the type $D$ ascent set of
$\e$ is defined as
\begin{equation}
\A_D \e =  \left\{i\in \{1,\dotsc, n-1\} \Bigm| \frac{e_i}{i} < \frac{e_{i+1}}{i+1}\right\} \cup \left\{0 \Bigm| {\rm if} \  e_1 + \frac{e_2}{2}\ge \frac{3}{2}\right\}.
\label{eq:D-Asc}
\end{equation}
Let
\[\asc_D \e = \left|\A_D \e\right|.
\]

\begin{lem} 
\label{lem:invseqD}
For $n \ge 1,$
\[
T_n(x) = \sum_{\e \in \I^{(2,4,\dots, 2n)}_n} x^{\asc_D \e}\,.
\]
\label{lem:Tn-invseq}
\end{lem}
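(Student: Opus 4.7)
The plan is to prove Lemma~\ref{lem:invseqD} by producing an explicit bijection
\[\psi : \Bn \longrightarrow \I^{(2,4,\dots,2n)}_n\]
that carries $\des_D$ to $\asc_D$. Given $\sigma = (\sigma_1, \dots, \sigma_n) \in \Bn$, set $\psi(\sigma) = (e_1, \dots, e_n)$, where $e_i$ is the number of elements of $\{\pm \sigma_1, \pm \sigma_2, \dots, \pm \sigma_i\}$ that are strictly greater than $\sigma_i$ in the natural ordering of integers. This $2i$-element set is pairwise distinct (since $\sigma$ is a signed permutation), so $e_i \in \{0, 1, \dots, 2i-1\}$, confirming $\psi(\sigma) \in \I^{(2,4,\dots,2n)}_n$. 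A short case analysis on the sign of $\sigma_i$ gives the closed form
\[
e_i = \begin{cases} t_i, & \sigma_i > 0, \\ 2i - 1 - t_i, & \sigma_i < 0, \end{cases}
\]
where $t_i := |\{j < i : |\sigma_j| > |\sigma_i|\}|$ is the classical Lehmer-code entry of the underlying permutation $|\sigma| \in \Sn$. This formula makes $\psi$ easy to invert: the sign of $\sigma_i$ is determined by whether $e_i < i$ or $e_i \ge i$, then $t_i$ is recovered, and finally $|\sigma|$ is reconstructed from $(t_1, \dots, t_n)$ via the inverse of $\phi$ in Lemma~\ref{Desinv}.

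The remaining task is to verify $\A_D \psi(\sigma) = \D_D \sigma$ position by position. For $1 \le i \le n-1$, I would split into the four cases determined by the signs of $\sigma_i$ and $\sigma_{i+1}$, substitute the formula above into the ascent condition $e_i (i+1) < e_{i+1} i$, and reduce using
\[
t_{i+1} = |\{j < i : |\sigma_j| > |\sigma_{i+1}|\}| + \chi(|\sigma_i| > |\sigma_{i+1}|).
\]
The both-positive case is precisely the classical Lehmer-code descent correspondence from Lemma~\ref{Desinv} applied to $|\sigma|$; the two mixed-sign cases are forced (ascent is automatic when $\sigma_i > 0 > \sigma_{i+1}$, impossible when $\sigma_i < 0 < \sigma_{i+1}$), matching the sign-determined descent status; and the both-negative case rearranges, using $t_i \le i-1$ to pass from a strict rational inequality to the equivalent non-strict integer inequality $t_{i+1} i \le t_i (i+1)$, which is equivalent to $|\sigma_i| < |\sigma_{i+1}|$ and hence to $\sigma_i > \sigma_{i+1}$.

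For position $i = 0$, I would simply tabulate the four sign-combinations of $(\sigma_1, \sigma_2)$: $e_1 = \chi(\sigma_1 < 0) \in \{0, 1\}$ and $e_2$ is determined by the sign of $\sigma_2$ together with $\chi(|\sigma_1| > |\sigma_2|)$, so $e_2 \in \{0,1\}$ if $\sigma_2 > 0$ and $e_2 \in \{2,3\}$ if $\sigma_2 < 0$. One then checks in each case that $e_1 + e_2/2 \ge 3/2$ holds precisely when $\sigma_1 + \sigma_2 < 0$, completing the proof.

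The main technical obstacle is the sign-by-sign analysis at positions $i \ge 1$; the only genuinely delicate case is the both-negative one, where the ascent inequality effectively reverses direction and one must carefully convert the strict rational inequality into an equivalent non-strict integer inequality by exploiting $t_i \le i - 1$.
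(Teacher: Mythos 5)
Your proposal is correct and takes essentially the same route as the paper: you construct the very same bijection $\psi$ (the paper states the case-split formula $e_i = t_i$ or $2i-1-t_i$ directly; you derive it from the equivalent counting description), and you then verify $\D_D\sigma = \A_D\psi(\sigma)$ by exactly the same case analysis on signs, which in the paper is packaged as parts \ref{cond:typeD} and \ref{cond:typeA} of Theorem~\ref{bijection}, from which Lemma~\ref{lem:invseqD} follows immediately.
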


We will also make use of the following basic but practical observation.
\begin{lem}
\label{lem:observation}
Let $a,b,p$ be nonnegative integers such that $0 \le a/p < 1$ and  $0 \leq b/(p+1) < 1$.
Then
\[\frac{a}{p} < \frac{b}{p+1} \Longleftrightarrow a < b.\]
\end{lem}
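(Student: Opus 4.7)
The plan is to prove both directions by cross-multiplication, exploiting the fact that the hypothesis $0 \le a/p < 1$ forces the strict inequality $a < p$, which is precisely the slack needed to make the routine argument work. First I would observe that $p \ge 1$ (otherwise $a/p$ is not defined), and that $a < p$ and $b \le p$ follow from the two assumed inequalities. Since $p$ and $p+1$ are positive, $a/p < b/(p+1)$ is equivalent to $a(p+1) < bp$, i.e.\ to $a < p(b-a)$.

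For the forward direction, I would argue contrapositively: if $a \ge b$, then $b - a \le 0$, so $p(b-a) \le 0 \le a$, contradicting $a < p(b-a)$. Hence $a < b$. For the reverse direction, suppose $a < b$, so $b \ge a+1$; then
\[
bp \ge (a+1)p = ap + p > ap + a = a(p+1),
\]
where the strict inequality uses precisely $a < p$. Dividing by $p(p+1) > 0$ gives $a/p < b/(p+1)$.

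There is no real obstacle here — the lemma is an elementary arithmetic fact and the only subtlety is that the hypothesis $a/p < 1$ (equivalently $a < p$) is essential for the reverse direction, since without it one could take, e.g., $a = p$ and $b = p$ to get $a = b$ yet $a/p = 1 > p/(p+1) = b/(p+1)$, showing the equivalence fails. The proof just needs to make clear where this strict inequality $a < p$ is used.
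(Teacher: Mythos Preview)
Your proof is correct and follows essentially the same approach as the paper: both directions are handled by cross-multiplying and using $a < p$ (from $a/p < 1$) together with $b \ge a+1$ for the reverse implication, and a straightforward contrapositive for the forward one. The only cosmetic difference is that the paper phrases the contrapositive as $(p+1)a \ge (p+1)b > pb$, whereas you write it via $p(b-a) \le 0 \le a$; the content is identical.
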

\begin{proof}
If $a < b$, then $a+1 \leq b$.  Thus, since $a < p,$
$
(p+1)a = pa + a < pa + p = p(a+1) \leq p b.
$
So, $(p+1)a < pb$.
Conversely, if $a \geq b$, then
$(p+1)a \geq (p+1)b > p b,
$
so $(p+1)a > p b$.

\end{proof}

Clearly, the set of signed permutations, $\Bn$, has the same cardinality
as the set of ``type $B$'' inversion sequences, $\I^{(2,4,\dots, 2n)}_n$. 
Next, we define a bijection $\psi$ between these sets  
that maps type $D$ descents in signed permutations 
to type $D$ ascents  in the inversion sequences.  We will prove several other properties of 
$\psi$ as well.  Some will be used to establish the real-rootedness of $T_n(x)$---and
hence $D_n(x)$---others will be needed in Section~\ref{subsec:affine} for the affine Eulerian polynomials.

Throughout this subsection we will assume
the natural ordering of integers, \[-n < \dotsb <-1 < 0 < 1 < \dotsb < n.\]

For $\sigma=(\sigma_1, \ldots, \sigma_n) \in \Bn$, let
$(t_1, \dotsc, t_n)= \phi(|\sigma_1|, \dotsc, |\sigma_n|)$ where $\phi$ is the 
map defined in Lemma~\ref{Desinv} and $(|\sigma_1|, \dotsc, |\sigma_n|)$ denotes the underlying
permutation in $\Sn$. Define the map $\psi:  \Bn \rightarrow \I^{(2,4,\dots, 2n)}_n$ as follows. Let
$
\psi(\sigma) = (e_1, \ldots, e_n),
$
where, for all $1\le i\le n$, \[e_i=\begin{cases} t_i&$ if\, $\sigma_i > 0\,, \\ 2i-1-t_i&$ if\, $\sigma_i < 0\,.\end{cases}\]

\begin{thm}
\label{bijection}
The map $\psi: \Bn \rightarrow \I^{(2,4,\dots, 2n)}_n$ is a bijection satisfying the following properties.
\begin{enumerate}[label=\emph{(\arabic*)}, ref=(\arabic*)]
\item
$\sigma_1 < 0$ if and only if $e_1 > 0$. \label{cond:typeB}
\item
$\sigma_n > 0$ if and only if $e_n < n$. \label{cond:typeBtilde}
\item
$\sigma_1 + \sigma_2 < 0$ if and only if $e_1 + e_2/2 \geq 3/2$. \label{cond:typeD}
\item $\sigma_i > \sigma_{i+1}$ if and only if
$e_i/i < e_{i+1}/(i+1)$, for $1\le i\le n-1$. \label{cond:typeA}
\item
$\sigma_{n-1} + \sigma_n > 0$ if and only if $e_{n-1}/(n-1) + e_n/n <
 (2n-1)/n$. \label{cond:typeDtilde}
\end{enumerate}
\end{thm}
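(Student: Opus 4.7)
My plan is to introduce a simple ``sign-flip symmetry'' of $\psi$ and then use it to reduce properties \ref{cond:typeD} and \ref{cond:typeDtilde} to the core condition \ref{cond:typeA}; the remaining two, \ref{cond:typeB} and \ref{cond:typeBtilde}, will follow immediately from the definition of $\psi$.

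First I would check that $\psi$ is a well-defined bijection. Since $0 \le t_i \le i-1$, the formula for $e_i$ yields $e_i \in \{0,\dots,i-1\}$ when $\sigma_i > 0$ and $e_i \in \{i,\dots,2i-1\}$ when $\sigma_i < 0$, so $\psi(\sigma) \in \I_n^{(2,4,\dots,2n)}$. An explicit inverse reads off $\lfloor e_i/i \rfloor \in \{0,1\}$ to recover the sign of $\sigma_i$, extracts $t_i = e_i$ or $t_i = 2i-1-e_i$ accordingly, and then recovers $(|\sigma_1|,\dots,|\sigma_n|)$ via $\phi^{-1}$ from Lemma~\ref{Desinv}. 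The structural observation driving the rest of the argument is: if $\sigma'$ agrees with $\sigma$ except that $\sigma'_k = -\sigma_k$, then all $t_j$ are unchanged (they depend only on absolute values), so $e_j(\sigma') = e_j(\sigma)$ for $j \ne k$ while $e_k(\sigma') = 2k-1-e_k(\sigma)$. Properties \ref{cond:typeB} and \ref{cond:typeBtilde} are then immediate: $t_1 = 0$ forces $e_1 \in \{0,1\}$ with $e_1 = 1$ iff $\sigma_1 < 0$, and $t_n \le n-1$ gives $e_n \le n-1$ iff $\sigma_n > 0$.

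For \ref{cond:typeA}, I would decompose $e_i = \alpha_i \cdot i + r_i$ with $\alpha_i = \lfloor e_i/i \rfloor \in \{0,1\}$ and $0 \le r_i < i$, noting that $\alpha_i = 0$ iff $\sigma_i > 0$, and $r_i = t_i$ when $\sigma_i > 0$ while $r_i = i-1-t_i$ when $\sigma_i < 0$. Comparing $e_i/i = \alpha_i + r_i/i$ with $e_{i+1}/(i+1) = \alpha_{i+1} + r_{i+1}/(i+1)$, the case $\alpha_i \ne \alpha_{i+1}$ is settled by the integer parts alone and matches the sign pattern of $\sigma_i,\sigma_{i+1}$, whereas when $\alpha_i = \alpha_{i+1}$, Lemma~\ref{lem:observation} reduces the inequality to comparing $r_i$ and $r_{i+1}$. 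In the both-positive subcase this becomes $t_i < t_{i+1}$; in the both-negative subcase the substitution $r = i-1-t$ reverses it to $t_{i+1} \le t_i$. Either way, combining with Lemma~\ref{Desinv} (which says $|\sigma_i| > |\sigma_{i+1}|$ iff $t_i < t_{i+1}$) identifies the condition with $\sigma_i > \sigma_{i+1}$.

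Finally, \ref{cond:typeD} and \ref{cond:typeDtilde} fall out of \ref{cond:typeA} via the sign-flip symmetry. For \ref{cond:typeD}, take $\sigma' = (-\sigma_1,\sigma_2,\dots,\sigma_n)$ so that $\psi(\sigma') = (1-e_1,e_2,\dots,e_n)$; applying \ref{cond:typeA} at $i = 1$ gives $\sigma_1+\sigma_2 < 0$ iff $-\sigma_1 > \sigma_2$ iff $1-e_1 < e_2/2$ iff $e_1 + e_2/2 > 1$, and since $e_1 \in \{0,1\}$ and $e_2 \in \{0,1,2,3\}$ force $e_1+e_2/2$ to be a half-integer, this is equivalent to $e_1+e_2/2 \ge 3/2$. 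For \ref{cond:typeDtilde}, take $\sigma'' = (\sigma_1,\dots,\sigma_{n-1},-\sigma_n)$, so $\psi(\sigma'') = (e_1,\dots,e_{n-1},2n-1-e_n)$; applying \ref{cond:typeA} at $i = n-1$ gives $\sigma_{n-1} > -\sigma_n$ iff $e_{n-1}/(n-1) < (2n-1-e_n)/n$, which rearranges to the claimed inequality. The main obstacle is the case analysis inside \ref{cond:typeA}, particularly the both-negative subcase where the orientation reversal $r_i = i-1-t_i$ flips the direction of the inequality; the $\alpha_i$--$r_i$ decomposition together with Lemma~\ref{lem:observation} is what makes this tractable.
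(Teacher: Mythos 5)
Your proof is correct and takes a genuinely cleaner route for parts~\ref{cond:typeD} and~\ref{cond:typeDtilde}. The paper verifies the bijection and parts~\ref{cond:typeB}, \ref{cond:typeBtilde}, \ref{cond:typeA} essentially as you do (the four-case analysis for~\ref{cond:typeA} is the same content as your $\alpha_i$--$r_i$ decomposition, just without your tidy $e_i=\alpha_i i + r_i$ bookkeeping), but it then handles part~\ref{cond:typeD} by tabulating all eight elements of $\mathfrak{B}_2$ and part~\ref{cond:typeDtilde} by a direct six-subcase computation using explicit formulas for $e_{n-1},e_n$ in terms of $\sigma_{n-1},\sigma_n$. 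Your sign-flip lemma --- that flipping $\sigma_k$ sends $e_k \mapsto 2k-1-e_k$ and fixes all other $e_j$ --- lets you derive both of these from the already-proved part~\ref{cond:typeA} applied to $\sigma'=(-\sigma_1,\sigma_2,\dots)$ at $i=1$ and to $\sigma''=(\dots,\sigma_{n-1},-\sigma_n)$ at $i=n-1$, avoiding the repeated case splitting entirely. The only small point worth spelling out is the strict-to-weak conversion in~\ref{cond:typeD} (you do: since $e_1\in\{0,1\}$ and $e_2\in\{0,\dots,3\}$, $e_1+e_2/2$ is a half-integer, so $>1$ is equivalent to $\ge 3/2$); the rest is airtight. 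What your approach buys is economy and a structural explanation of why~\ref{cond:typeD} and~\ref{cond:typeDtilde} look like perturbed versions of~\ref{cond:typeA}; what the paper's brute-force approach buys is that each part can be read independently.
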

\begin{proof} 
Note that $\sigma_i < 0$ if and only if $e_i \geq i$
which proves \ref{cond:typeB} and \ref{cond:typeBtilde}.
Moreover, this shows that the map $\psi$ is a bijection since $\phi$ is.

\begin{enumerate}
\setcounter{enumi}{2}
\item It is not too hard to see that it is sufficient to verify this claim 
for all $\sigma \in \mathfrak{B}_2$. See Table~\ref{table}.
\begin{table}[ht]
\begin{center}
\begin{tabular}{|c||c|c|c|}

\hline
$\sigma\in\mathfrak{B}_2$ &$\e \in I_2^{(2,4)}$ & $\A_D \e$ & $\asc_D \e$ \\
\hline
(1,2)&(0,0) & $\{ \ \}$ & 0 \\
\hline
(-1,2)&(1,0) & $\{ \ \}$ & 0 \\
\hline
(2,1) &(0,1) & $\{ 1 \}$ & 1 \\
\hline
(-2,1)&(1,1) & $\{ 0 \}$ & 1 \\
\hline
(2,-1)&(0,2) & $\{ 1 \}$ & 1 \\
\hline
(-2,-1)&(1,2) & $\{ 0 \}$ & 1 \\
\hline
(1,-2)&(0,3) & $\{ 0,1 \}$ & 2 \\
\hline
(-1,-2)&(1,3) & $\{ 0,1 \}$ & 2 \\
\hline
\end{tabular}
\end{center}
\caption{An example of the bijection for $n=2$.}
\label{table}
\end{table}

\item
To prove this claim, we consider four cases, based the signs of $\sigma_i$ and $\sigma_{i+1}$.

\begin{enumerate}
\item{If $\sigma_i > 0$ and $\sigma_{i+1} > 0$},
then $e_i = t_i < i$ and $e_{i+1}=t_{i+1} < i+1$.
By Lemma~\ref{Desinv}, $\sigma_i > \sigma_{i+1}$ if and only if $t_i < t_{i+1}$, 
i.e, if and only if $e_i < e_{i+1}$.
By Lemma~\ref{lem:observation}, this is equivalent to 
$e_i/i < e_{i+1}/(i+1)$.

\item{If $\sigma_i < 0$ and $\sigma_{i+1} < 0$},
then $e_i =2i-1- t_i $ and $e_{i+1}=2(i+1)-1-t_{i+1} $.
Now $\sigma_i > \sigma_{i+1}$ if and only if $|\sigma_i| < |\sigma_{i+1}|$,
which,
applying Lemma~\ref{Desinv}, is equivalent to $t_i \geq t_{i+1}$.

If $t_i \geq t_{i+1}$,
\[
\frac{e_i}{i} = 2 - \frac{t_i+1}{i} \leq 2 - \frac{t_{i+1}+1}{i} 
< 2 - \frac{t_{i+1}+1}{i+1} = \frac{e_{i+1}}{i+1}.
\]
On the other hand, if $t_i < t_{i+1}$, then $t_i+1 \leq  t_{i+1}$ and 
by Lemma~\ref{lem:observation}, $t_{i+1}/i < (t_{i+1}+1)/(i+1)$, so
\[
\frac{e_i}{i} = 2 - \frac{t_i+1}{i} \geq 2 - \frac{t_{i+1}}{i}
> 2 - \frac{t_{i+1}+1}{i+1} =
\frac{e_{i+1}}{i+1}.
\]

\item{If $\sigma_i < 0<\sigma_{i+1}$}, then
$e_i =2i-1- t_i $ and $e_{i+1}=t_{i+1} \leq  i$.
Since $t_i \leq i-1$, $e_i \geq 2i-1-(i-1)=i.$
Thus we have
\[
\frac{e_i}{i} \geq 1> \frac{i}{i+1}
\geq \frac{e_{i+1}}{i+1}.
\]

\item{If $\sigma_i > 0>\sigma_{i+1}$}, then
$e_i = t_i < i $ and $e_{i+1}= 2(i+1)-1-t_{i+1}.$
Since $t_{i+1} \leq i$, $e_{i+1} \geq 2(i+1)-1-(i)=i+1.$
Thus we have
\[
\frac{e_i}{i} < 1 
\leq \frac{e_{i+1}}{i+1}.
\]
\end{enumerate}
\item 
 Since $t_n = n-|\sigma_n|$,
\[
e_n = \left \{
\begin{array}{ll}
n-\sigma_n & {\rm if} \ \sigma_n > 0\\
n-1 + |\sigma_n| & {\rm if} \ \sigma_n < 0.\\
\end{array}
\right.
\]
Note that $t_{n-1} = n-|\sigma_{n-1}| - \chi(|\sigma_n| > |\sigma_{n-1}|)$.
Thus,
\[
e_{n-1} = \left \{
\begin{array}{ll}
n-\sigma_{n-1} - \chi(|\sigma_n| > |\sigma_{n-1}|)  & {\rm if} \ \sigma_{n-1} > 0\\
n-3 + |\sigma_{n-1}| + \chi(|\sigma_n| > |\sigma_{n-1}|) & {\rm if} \ \sigma_{n-1} < 0.\\
\end{array}
\right.
\]

First assume $\sigma_{n-1}+\sigma_{n} > 0$.
Then either
(i) $\sigma_{n-1} > 0$ and $\sigma_{n} > 0$, and so
\[
\frac{e_{n-1}}{n-1}+ \frac{e_{n}}{n}
\leq \frac{n-2}{n-1}+ \frac{n-1}{n} < \frac{2n-1}{n}\,,
\]

or
(ii) $\sigma_{n-1} > 0$ and $\sigma_{n} < 0$ with $1 \leq  |\sigma_{n}| < \sigma_{n-1} \leq n,$
in which case

\begin{align*}
\frac{e_{n-1}}{n-1}+ \frac{e_{n}}{n}
&=
\frac{n-\sigma_{n-1}}{n-1} + \frac{n-1+|\sigma_{n}|}{n}\\
&=
\frac{2n-1}{n} + \left(\frac{|\sigma_{n}|}{n} - \frac{\sigma_{n-1}-1}{n-1} \right)
<
\frac{2n-1}{n}\,,
\end{align*}

or
(iii) $\sigma_{n-1} < 0$ and $\sigma_{n} > 0$ with 
 $ 1\le |\sigma_{n-1}| < \sigma_{n}\le n,$ so that

\begin{align*}
\frac{e_{n-1}}{n-1}+ \frac{e_{n}}{n}
&=
\frac{n-2+|\sigma_{n-1}|}{n-1} + \frac{n-\sigma_{n}}{n}\\
&=
\frac{2n-1}{n} + \left(\frac{|\sigma_{n-1}|-1}{n-1} - \frac{\sigma_n-1}{n}\right)
<
\frac{2n-1}{n}\,.
\end{align*}
The last inequality in both cases (ii) and (iii) follows by Lemma~\ref{lem:observation}.

Now assume $\sigma_{n-1}+\sigma_{n} < 0$.
Then either
(iv) $\sigma_{n-1} < 0$ and $\sigma_{n} < 0$, so
\[ \frac{e_{n-1}}{n-1}+ \frac{e_{n}}{n} \ge 1 + 1 > \frac{2n-1}{n}\,,\]

or
(v) $\sigma_{n-1} < 0$ and $\sigma_{n} > 0$ with $1\le |\sigma_{n}| < \sigma_{n-1} \le n$, then 
\begin{align*}
\frac{e_{n-1}}{n-1}+ \frac{e_{n}}{n}
&=\frac{n-3 +|\sigma_{n-1}| }{n-1} + \frac{n-|\sigma_{n}|}{n}\\
&= \frac{2n-1}{n} + \left (\frac{|\sigma_{n-1}| - 2}{n-1} - \frac{|\sigma_{n}| - 1}{n} \right ) \ge \frac{2n-1}{n}\,,
\end{align*}

 or
(vi) $\sigma_{n-1} > 0$ and $\sigma_{n} < 0$ with
 $ 1\le |\sigma_{n-1}| < \sigma_{n} \le n$, then
\begin{align*}
\frac{e_{n-1}}{n-1}+ \frac{e_{n}}{n} &= 
\frac{n-1 -|\sigma_{n-1}| }{n-1} + \frac{n+|\sigma_{n}|-1}{n}
\\&= \frac{2n-1}{n} + \left (\frac{|\sigma_{n}| }{n} - \frac{|\sigma_{n-1}| }{n-1} \right )
 \ge \frac{2n-1}{n},
\end{align*}
Again, we applied Lemma~\ref{lem:observation} in the last steps of (v) and (vi).
\end{enumerate}

\end{proof}

\begin{proof}[Proof of Lemma~\ref{lem:invseqD}]
Follows from parts \ref{cond:typeD} 
and \ref{cond:typeA} 
of Theorem~\ref{bijection}.

\end{proof}

\begin{cor} Parts \ref{cond:typeB} and \ref{cond:typeA} of Theorem~\ref{bijection} can be used to give an alternative
proof of Corollary~\ref{cor:typeB}.
\end{cor}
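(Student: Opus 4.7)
The plan is to use parts \ref{cond:typeB} and \ref{cond:typeA} of Theorem~\ref{bijection} to show that the bijection $\psi$ transports the descent set $\D_B\sigma$ (cf.\ \eqref{def:B-des}) onto the ascent set $\A\psi(\sigma)$ in $\I_n^{(2,4,\ldots,2n)}$. This yields $B_n(x)=\sum_{\sigma \in \Bn} x^{\des_B\sigma}=\E_n^{(2,4,\ldots,2n)}(x)$, whose real-rootedness is then immediate from Theorem~\ref{thm:main} applied with $\s=(2,4,\ldots,2n)$.

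Concretely, I first unfold the definition of $\A\e$ for $\s=(2,4,\ldots,2n)$ under the convention $e_0=0$, $s_0=1$. The ascent condition at index $i=0$ reduces to $e_1>0$; for $1\le i\le n-1$, cancelling the common factor of $2$ turns $e_i/(2i)<e_{i+1}/(2(i+1))$ into $e_i/i<e_{i+1}/(i+1)$. Part \ref{cond:typeB} of Theorem~\ref{bijection} equates the first condition with ``$\sigma_1<0$'', while part \ref{cond:typeA} equates the second with ``$\sigma_i>\sigma_{i+1}$''. These are precisely the two clauses defining $\D_B\sigma$ in \eqref{def:B-des}, hence $\des_B\sigma=\asc\psi(\sigma)$ index by index, and summing over $\sigma\in\Bn$ produces the required identity of polynomials.

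There is essentially no obstacle here: the substantive case analysis was already carried out in the proof of Theorem~\ref{bijection}, and the present corollary just extracts parts \ref{cond:typeB} and \ref{cond:typeA}, which happen to package exactly the information needed to match $\A\e$ with $\D_B\sigma$ in the $\s=(2,4,\ldots,2n)$ case. The novelty, compared to the original proof of Corollary~\ref{cor:typeB}, is that this approach exhibits an explicit combinatorial bijection $\psi\colon\Bn\to\I_n^{(2,4,\ldots,2n)}$ that sends $\des_B$ to $\asc$, bypassing Brenti's generating-function identity \eqref{steingrimsson} entirely.
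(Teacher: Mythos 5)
The paper states this corollary as a remark without an accompanying proof, and your reconstruction is exactly the argument the authors have in mind: parts \ref{cond:typeB} and \ref{cond:typeA} of Theorem~\ref{bijection} show that $\psi$ carries $\D_B\sigma$ to $\A\psi(\sigma)$, so $B_n(x)=\E_n^{(2,4,\ldots,2n)}(x)$ bijectively and Theorem~\ref{thm:main} closes the loop, bypassing Brenti's generating-function identity. The only thing worth being a bit more careful about is that $\D_B$ in \eqref{def:B-des} (Subsection~\ref{subsec:coxeter}, natural integer ordering) and $\desB$ in Subsection~\ref{sec:signedpermutations} (the $<_B$ ordering) are not the same function on $\Bn$ pointwise, only equidistributed; your phrase ``$\des_B\sigma=\asc\psi(\sigma)$ index by index'' is literally true for the Coxeter-theoretic $|\D_B\sigma|$ from \eqref{def:B-des}, which is what Theorem~\ref{bijection} addresses, and this suffices since both conventions yield the same polynomial $B_n(x)$.
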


\begin{lem} For $n \geq 2$ and $0 \leq i < 2(n+1),$
\[T_{n+1,i}(x) = \sum_{\ell=0}^{\lceil ni/(n+1)\rceil - 1} xT_{n,\ell}(x) 
+ \sum_{\ell=\lceil ni/(n+1)\rceil}^{2n-1}T_{n,\ell}(x),\]
with initial conditions
$T_{2,0}(x)=2$,
$T_{2,1}(x)= T_{2,2}(x)=2x$, and
$T_{2,3}(x)=2x^2$.
\label{lem:T-recurrence}
\end{lem}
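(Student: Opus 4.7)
The plan is to mimic the proof of the recurrence in equation~\eqref{eq:recurrencePni} for $E_{n,i}^{(\s)}(x)$, with the key observation that for $n+1\ge 3$, appending a new last entry does not disturb the type~$D$ contribution at position $0$ of the ascent set (which depends only on $e_1$ and $e_2$). By Lemma~\ref{lem:invseqD}, $T_{n+1}(x)=\sum_{\e\in \I_{n+1}^{(2,4,\dots,2(n+1))}} x^{\asc_D \e}$, so it is natural to define the refinement
\[
T_{n+1,i}(x) \;=\; \sum_{\e\in \I_{n+1}^{(2,4,\dots,2(n+1))},\ e_{n+1}=i} x^{\asc_D \e},
\]
which gives $T_{n+1}(x)=\sum_{i=0}^{2(n+1)-1} T_{n+1,i}(x)$.

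Fix $n\ge 2$ and $0\le i<2(n+1)$. Given $\e=(e_1,\dotsc,e_n,i)\in \I_{n+1}^{(2,4,\dots,2(n+1))}$, the ascent set $\A_D \e$ as defined in \eqref{eq:D-Asc} splits into two independent contributions: the indicator of $0\in \A_D\e$, determined entirely by $e_1+e_2/2\ge 3/2$, and the indicators of $j\in \A_D\e$ for $1\le j\le n$, each determined by the usual ascent test $e_j/j<e_{j+1}/(j+1)$. Since $n\ge 2$, the position-$0$ contribution and the position-$j$ contributions for $1\le j\le n-1$ depend only on $(e_1,\dotsc,e_n)$, not on $i$. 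Therefore
\[
\asc_D(e_1,\dotsc,e_n,i) \;=\; \asc_D(e_1,\dotsc,e_n) + \chi\!\left(\frac{e_n}{n}<\frac{i}{n+1}\right),
\]
and the last indicator equals $\chi\bigl(e_n\le \lceil ni/(n+1)\rceil - 1\bigr)$. Summing over all valid $(e_1,\dotsc,e_n)$ and grouping by the value $\ell=e_n$ yields the stated recurrence
\[
T_{n+1,i}(x) \;=\; \sum_{\ell=0}^{\lceil ni/(n+1)\rceil - 1} x\,T_{n,\ell}(x) \;+\; \sum_{\ell=\lceil ni/(n+1)\rceil}^{2n-1} T_{n,\ell}(x).
\]

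Finally, the initial conditions at $n=2$ are obtained by direct inspection: reading off the eight rows of the $n=2$ table in the proof of Theorem~\ref{bijection} and grouping inversion sequences by the value of $e_2$ yields $T_{2,0}(x)=1+1=2$, $T_{2,1}(x)=x+x=2x$, $T_{2,2}(x)=x+x=2x$, and $T_{2,3}(x)=x^2+x^2=2x^2$, exactly as claimed. There is no real obstacle here; the only point that deserves care is ensuring $n\ge 2$ so that the special type~$D$ position at index $0$ has already been fixed by $(e_1,e_2)$ and is unaffected by appending $e_{n+1}$, which is precisely why the recurrence starts at $n=2$ rather than $n=1$.
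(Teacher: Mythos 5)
Your proof is correct and takes essentially the same approach as the paper: decompose $\asc_D$ when appending the last coordinate, observe that the new coordinate only affects the indicator at position $n$, translate $e_n/n < i/(n+1)$ into $e_n \le \lceil ni/(n+1)\rceil - 1$, and verify the initial conditions directly from the $n=2$ table. Your added remark explaining why the position-$0$ type-$D$ contribution is unaffected for $n \ge 2$ is a helpful clarification that the paper leaves implicit.
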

\begin{proof}
The initial conditions can be checked from the Table~\ref{table}. 
Now suppose $n \geq 2$ and  $\e=(e_1, \ldots, e_{n+1}) \in \I_{n+1}^{(2,4,\dotsc,2n+2)}$ with $e_{n+1}=i$.  
Then, by the definition of the type $D$ ascent set,
$n \in \A_D \e$ if and only if
$e_n/n < i/(n+1)$ or, equivalently, whenever
$0 \leq 
\ell \leq  \lceil ni/(n+1)\rceil - 1$.
So,
\[
\asc_D \e = \asc_D(e_1, \ldots, e_n) + \chi(e_n \leq \lceil ni/(n+1)\rceil - 1). 
\]
We conclude the proof by letting $\ell = e_n$. 

\end{proof}

Finally, we are in position to prove Brenti's conjecture (Conjecture~\ref{conj:typeD}). 
\begin{thm} For $n\ge 2$, the polynomial
$T_n(x)$ has only real roots.
In fact, for $0 \leq i < 2n$, $T_{n,i}(x)$ has only real roots.
\label{thm:typeD}
\end{thm}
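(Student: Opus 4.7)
The plan is to prove the theorem by induction on $n \ge 2$, establishing the stronger statement that the refined polynomials $T_{n,0}(x), \dotsc, T_{n,2n-1}(x)$ satisfy both hypotheses of Theorem~\ref{thm:compatible}: for all $0 \le i < j \le 2n-1$, the polynomials $T_{n,i}(x)$ and $T_{n,j}(x)$ are compatible, and so are $xT_{n,i}(x)$ and $T_{n,j}(x)$. From this, Lemma~\ref{lem:ChudnovskySeymour} yields compatibility of the whole family, so $T_n(x) = \sum_i T_{n,i}(x)$ has only real roots, and each $T_{n,i}(x)$ is real-rooted (being compatible with itself).

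The inductive step is essentially automatic: Lemma~\ref{lem:T-recurrence} expresses each $T_{n+1,k}(x)$ as a conic combination of $xT_{n,\ell}(x)$ (for $\ell < t_k$) and $T_{n,\ell}(x)$ (for $\ell \ge t_k$) with the nondecreasing pivots $t_k = \lceil nk/(n+1)\rceil$. This has exactly the form required by Theorem~\ref{thm:compatible}, whose conclusion propagates the two hypotheses from level $n$ to level $n+1$.

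The main obstacle is establishing the base case, which is subtle because the natural choice $n=2$ does \emph{not} work: the initial data $T_{2,0}=2$, $T_{2,1}=T_{2,2}=2x$, $T_{2,3}=2x^2$ fails the pairwise compatibility hypothesis, since the pair $(2,2x^2)$ admits combinations $c_1 \cdot 2 + c_2 \cdot 2x^2$ with no real roots when $c_1,c_2>0$. My plan is therefore to iterate the recurrence explicitly through $n=2,3$ to obtain closed forms for the $T_{4,i}(x)$, and to verify the base case at $n=4$ using the interlacing criterion of Lemma~\ref{lem:interlacing}: check that $T_{4,i}(x)$ interlaces $T_{4,j}(x)$ for every pair $0 \le i < j \le 7$. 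At level $4$ this verification is tractable because many of the polynomials share common factors, e.g.\ $T_{4,0} = 2(1+x)(1+10x+x^2)$, $T_{4,1} = 4x(1+x)(5+x)$, $T_{4,6} = 4x(1+x)(1+5x)$, and $T_{4,7} = xT_{4,0}$, so that most pairwise checks reduce to verifying the positivity of an explicit quadratic discriminant. The two small cases $n=2,3$ falling outside the induction are handled directly by the factorizations $T_2(x) = 2(1+x)^2$ and $T_3(x) = 2(1+x)(1+10x+x^2)$, both evidently real-rooted, with each $T_{n,i}(x)$ at these levels also real-rooted by inspection.
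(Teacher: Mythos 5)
Your proposal is correct and follows essentially the same route as the paper: induct via Theorem~\ref{thm:compatible}, recognize that the hypotheses fail at $n=2$ (and $n=3$), compute through to $n=4$ and verify the interlacing criterion of Lemma~\ref{lem:interlacing} pairwise on the eight polynomials $T_{4,0},\dotsc,T_{4,7}$ as the base case, and dispose of $n=2,3$ by direct inspection. Your explicit factorizations ($T_2=2(1+x)^2$, $T_3=2(1+x)(1+10x+x^2)$, $T_{4,7}=xT_{4,0}$, etc.) agree with those in the paper, and your diagnosis of why $n=2$ fails as a base case matches the paper's observation that $T_{2,0}+T_{2,3}$ has no real roots.
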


\begin{proof}
We prove the statement by induction for $n\ge 4$ using Theorem~\ref{thm:compatible}. For $n=2$ and $n=3$ the hypotheses
of Theorem~\ref{thm:compatible} do not hold. We need to check the cases $n=2$ and $n=3$ separately.

Clearly, $T_2(x) = 2(x+1)^2$ has only real roots, but the polynomials 
$T_{2,0}(x)=2,T_{2,1}(x)=T_{2,2}(x)=2x,T_{2,3}(x)=2x^2$ 
fail to be compatible, since $T_{2,0}(x) + T_{2,3}(x)$ has no real roots.

Using the recurrence given in Lemma~\ref{lem:T-recurrence} we can easily compute 
$T_{n,i}(x)$ for $n=3$. While $T_3(x) = 2(x^3+11x^2+11x+1)$ has only real roots, 
the polynomials $T_{3,0}(x) = 2(x+1)^2$, $T_{3,1}(x) = 2x(x+3)$, 
$T_{3,2}(x) =  T_{3,3}(x) = 4x(x+1)$,
$T_{3,4}(x)  = 2x(3x+1)$,
$T_{3,5}(x) = 2x(x+1)^2$ are not compatible, e.g., $T_{3,0}(x)+T_{3,4}(x)$ has no real roots. 

However, iterating one more time, we obtain the following eight polynomials (the approximate values of their roots
are also given for the reader's convenience):
\[
\begin{tabular}{rlll}
$T_{4,0}(x) = $& $2(x+1)(x^2+10x+1)$ && $\{-9.899, -1, -0.101\}$\\
$T_{4,1}(x) = $& $4x(x+1)(x+5)$&& $\{-5,-1,0\}$ \\
$T_{4,2}(x) = $& $2x(3x^2+14x+7)$&&$\{-4.097, -0.569, 0\}$\\
$T_{4,3}(x) = $& $2x(5x^2+14x+5)$&&$\{-2.380,-0.420, 0\}$ \\
$T_{4,4}(x) = $& $2x(5x^2+14x+5)$&&$\{-2.380,-0.420, 0\}$ \\
$T_{4,5}(x) = $& $2x(7x^2+14x+3)$&&$\{-1.756, -0.244, 0\}$\\
$T_{4,6}(x) = $& $4x(x+1)(5x+1)$&&$\{-1,-0.2, 0\}$\\
$T_{4,7}(x) = $& $2x(x+1)(x^2+10x+1)$&&$\{-9.899, -1, -0.101,0\}$.
\end{tabular}\]

We need to show that these eight polynomials
are indeed pairwise compatible and also that $xT_{4,i}(x)$ and $T_{4,j}(x)$ are
compatible for all $0\le i < j \le 7$.
By Lemma~\ref{lem:interlacing}, this can be done by checking the roots explicitly to verify that 
$T_{4,i}(x)$ interlaces $T_{4,j}(x)$
for all $0 \le i < j \le 7$. 
Proceeding by induction on $n$, successive applications of Theorem~\ref{thm:compatible} gives us that for all $n\ge 4$ the 
polynomials
$T_{n,0}(x), \dotsc, T_{n, 2n-1}(x)$ are pairwise compatible and also that $xT_{n,i}(x)$ and $T_{n,j}(x)$ are compatible 
for all $0\le i < j \le 2n-1$. 
In particular, the former is equivalent to saying that 
these $2n$ polynomials are compatible. Therefore, their sum, $T_n(x)$, has only real roots for all $n\ge 4$ as well.

\end{proof}

\subsection{Affine descents in Weyl groups}
\label{subsec:affine}
Recently, Dilks, Petersen and Stembridge defined and studied Eulerian-like polynomials associated
to irreducible affine Weyl groups. In \cite{DPS09}, they define these ``affine'' Eulerian polynomials 
as generating functions for ``affine descents'' over the corresponding finite Weyl group. An affine descent is similar to an ordinary descent in a Weyl group, except that the reflection corresponding 
to the highest root (in the underlying root system) may also contribute a descent, depending on its effect on length. 

Dilks, Petersen and Stembridge observed that these polynomials have interesting properties similar
to their counterparts for the Coxeter groups and proposed a companion conjecture to Brenti's 
conjecture. 
\begin{conj}[Conjecture~4.1 in \cite{DPS09}]
The affine Eulerian polynomials for all finite Weyl groups have only real roots.
\label{conj:weyl}
\end{conj}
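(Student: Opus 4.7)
The plan is to attack Conjecture~\ref{conj:weyl} type by type, mirroring the strategy that succeeded for Conjecture~\ref{conj:Coxeter} in Subsection~\ref{subsec:coxeter}. The exceptional affine Weyl groups $\widetilde{G}_2, \widetilde{F}_4, \widetilde{E}_6, \widetilde{E}_7, \widetilde{E}_8$ are of bounded rank, so their affine Eulerian polynomials are explicit polynomials of small degree whose real-rootedness can be verified by a direct computation (explicit factorization or Sturm's theorem). The substance lies in the four classical families $\widetilde{A}_{n-1}, \widetilde{B}_n, \widetilde{C}_n, \widetilde{D}_n$, and my goal would be to extend the $\s$-inversion sequence machinery to each of them.

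For each classical type, the affine descent statistic differs from the ordinary descent statistic by a single additional ``boundary'' indicator coming from the reflection through the highest root. Using the bijections already in hand---the map $\phi$ of Lemma~\ref{Desinv} for type $A$, and the map $\psi$ of Theorem~\ref{bijection} for types $B$ and $D$---I would translate this extra contribution into a condition on the first and/or last entries of an inversion sequence $\e \in \I_n^{(\s)}$. For instance, for type $\widetilde{B}_n$, properties \ref{cond:typeB} and \ref{cond:typeBtilde} of Theorem~\ref{bijection} already express $\sigma_1 < 0$ and $\sigma_n > 0$ as conditions on $e_1$ and $e_n$ alone, so the affine descent can be encoded as an ``affine ascent'' statistic on $\I_n^{(2,4,\ldots,2n)}$. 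For type $\widetilde{D}_n$, I would likewise combine properties \ref{cond:typeD} and \ref{cond:typeDtilde} to rewrite the affine descent as an ascent-type statistic over the same inversion sequences.

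The next step is to establish real-rootedness of the resulting generating polynomial. Here Theorem~\ref{thm:main} does not apply directly, since the statistic being tracked is modified. Instead, I would introduce a joint refinement $\poly^{(\s)}_{n,i,j}(x)$ obtained by fixing both $e_1 = i$ and $e_n = j$, derive a recurrence on the last coordinate analogous to~\eqref{eq:recurrencePni} (keeping $i$ fixed throughout), and apply Theorem~\ref{thm:compatible} inductively to show that the family $\{\poly^{(\s)}_{n,i,j}(x)\}_j$ remains pairwise compatible for each fixed $i$. Summing in $j$ would give compatibility of $\{\poly^{(\s)}_{n,i}(x)\}_i$, and then applying the appropriate boundary weights would yield real-rootedness of the affine Eulerian polynomial. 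The base case may need to be verified by hand for small $n$, exactly as was done in the proof of Theorem~\ref{thm:typeD}.

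The main obstacle---and presumably the reason this paper only partially settles Conjecture~\ref{conj:weyl}---is the nonlocal coupling introduced by the affine boundary condition. The recurrence~\eqref{eq:recurrencePni} driving the compatible polynomials machinery updates only one coordinate at a time, and in types $\widetilde{D}_n$ (and possibly $\widetilde{A}_{n-1}$) the affine descent simultaneously constrains the interaction between $\sigma_1, \sigma_2$ and between $\sigma_{n-1}, \sigma_n$, so the two-endpoint refinement does not cleanly decouple along the recurrence. In type $\widetilde{B}_n$ the affine contribution depends only on $\sigma_1$ (and symmetrically on $\sigma_n$), so the argument should go through and yield that case of the conjecture. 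For the remaining types a strengthened compatibility hypothesis, or a genuinely new inductive invariant adapted to the coupled boundary, would be needed.
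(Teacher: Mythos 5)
Your high-level plan (type by type, exceptionals by computation, classical types via the $\s$-inversion machinery, difficulty localized in the coupling at the boundary) is reasonable, but it misses the key observation the paper uses, and the specific mechanism you propose for type $\widetilde{B}_n$ has a gap.

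First, a factual slip: you claim that in type $\widetilde{B}_n$ the affine contribution ``depends only on $\sigma_1$ (and symmetrically on $\sigma_n$).'' It does not. The affine descent is $\chi(\sigma_1<0)+|\{i:\sigma_i>\sigma_{i+1}\}|+\chi(\sigma_{n-1}+\sigma_n>0)$, so one end carries the same two-variable coupling you identify as the obstruction for type $\widetilde D$. What saves type $B$ is a structural coincidence you do not exploit: applying the involution $(\sigma_1,\dots,\sigma_n)\mapsto(-\sigma_n,\dots,-\sigma_1)$ turns the statistic into $\chi(\sigma_n>0)+|\{i:\sigma_i>\sigma_{i+1}\}|+\chi(\sigma_1+\sigma_2<0)$, and the last two summands together are \emph{exactly} the type $D$ descent number $\des_D\sigma$. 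Under $\psi$ the extra term $\chi(\sigma_n>0)$ becomes $\chi(e_n<n)$, i.e.\ $e_n/n < (n+1)/(n+1)$, so the whole affine type $B$ Eulerian polynomial is precisely $T_{n+1,n+1}(x)$ --- one member of the compatible family $\{T_{n+1,i}(x)\}_i$ that Theorem~\ref{thm:typeD} has already shown to be real-rooted. No new recurrence or refinement is needed; the result drops out of work already done for the finite type $D$ case. This is why the type $\widetilde D$ case stays open: the second coupled endpoint $\chi(\sigma_{n-1}+\sigma_n>0)$ cannot be absorbed the same way.

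Second, the two-endpoint refinement $\poly^{(\s)}_{n,i,j}(x)$ you propose does not deliver what you claim. For each fixed $i$ you can indeed run the last-coordinate recurrence and get that $\{\poly^{(\s)}_{n,i,j}(x)\}_j$ is a compatible family, but ``summing in $j$ gives compatibility of $\{\poly^{(\s)}_{n,i}(x)\}_i$'' does not follow: compatibility of each horizontal slice says nothing about compatibility across different $i$, and the family $\{\sum_j \poly_{n,i,j}\}_i$ (refined by the \emph{first} coordinate) is a different object from the family the machinery of Theorem~\ref{thm:compatible} controls (refined by the \emph{last} coordinate). To conclude real-rootedness of $\sum_{i,j}c_{ij}\poly_{n,i,j}$ you would need the full doubly-indexed family $\{c_{ij}\poly_{n,i,j}\}_{i,j}$ to be compatible, and the single-coordinate recurrence gives no leverage on that.
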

As they pointed out the type $A$ and $C$ affine Eulerian polynomials 
were already known to be multiples of the classical Eulerian polynomial and hence,
have only real roots. 
In this section, we prove one of the remaining cases, for type $B$ 
(the type $D$ case remains open). 

The affine Eulerian polynomial of type $B$ is defined in \cite[Section 5.3]{DPS09} as 
the generating function of the ``affine descents'' over the corresponding finite Weyl group, $\Bn$,
\[ \widetilde{B}_n(x) = \sum_{\sigma\in \Bn} x^{\widetilde{\des}_B \sigma}, \]
where for a signed permutation $\sigma = (\sigma_1, \dotsc,  \sigma_{n}) \in \Bn$ the 
affine descent statistic is computed as
\[\widetilde{\des}_B \sigma = \chi(\sigma_1 <0) +  \left | \{1\le i \le n-1 \mid \sigma_i > \sigma_{i+1}\}\right | + \chi(\sigma_{n-1} + \sigma_{n} > 0).\]

Notice the affine Eulerian polynomial of type 
$B$ is intimately related to the type $D$ Eulerian polynomial in the following way.
\begin{thm} For $n \ge 2$,
\[\widetilde{B}_n(x) = T_{n+1,n+1}(x)\,,\]
where $T_{n,i}(x)$ is the refined Eulerian polynomial of type $D$ defined in (\ref{Tdef}).
\end{thm}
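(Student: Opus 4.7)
The plan is to unfold $T_{n+1,n+1}(x)$ into a sum over signed permutations via the bijection $\psi$ of Theorem~\ref{bijection}, and then match it to $\widetilde{B}_n(x)$ through a simple involution on $\mathfrak{B}_n$. By \eqref{eq:Pni} and Lemma~\ref{lem:invseqD}, $T_{n+1,n+1}(x)$ equals the sum of $x^{\asc_D \e}$ over those $\e = (e_1,\dots,e_n,n+1) \in \I_{n+1}^{(2,4,\dots,2n+2)}$ with $e_{n+1}=n+1$. Since $e_{n+1}/(n+1)=1$, the ascent at position $n$ is present exactly when $e_n<n$, so \eqref{eq:D-Asc} specializes to
\[ \asc_D \e \;=\; \chi\!\left(e_1 + \tfrac{e_2}{2} \ge \tfrac{3}{2}\right) + \Bigl|\bigl\{1 \le i \le n-1 : \tfrac{e_i}{i} < \tfrac{e_{i+1}}{i+1}\bigr\}\Bigr| + \chi(e_n < n). \]

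Next, I would apply $\psi^{-1}:\I_n^{(2,4,\dots,2n)}\to\mathfrak{B}_n$ to the first $n$ coordinates and translate the three summands using Theorem~\ref{bijection}: part \ref{cond:typeD} converts the first into $\chi(\sigma_1+\sigma_2<0)$, part \ref{cond:typeA} converts the middle into the ordinary descent count $|\{1\le i\le n-1 : \sigma_i > \sigma_{i+1}\}|$, and part \ref{cond:typeBtilde} converts the last into $\chi(\sigma_n>0)$. Therefore
\[ T_{n+1,n+1}(x) \;=\; \sum_{\sigma \in \mathfrak{B}_n} x^{f(\sigma)}, \]
where $f(\sigma) = \chi(\sigma_1+\sigma_2<0) + |\{1\le i\le n-1 : \sigma_i > \sigma_{i+1}\}| + \chi(\sigma_n>0)$.

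The last step is to introduce the reverse-and-negate involution $\iota:\mathfrak{B}_n\to\mathfrak{B}_n$ given by $\iota(\sigma)_i = -\sigma_{n+1-i}$, and verify that $f(\iota(\sigma)) = \widetilde{\des}_B \sigma$. The first summand of $f$ becomes $\chi(-\sigma_n-\sigma_{n-1}<0) = \chi(\sigma_{n-1}+\sigma_n>0)$, the middle summand is preserved under the substitution $j=n-i$, and the last summand becomes $\chi(-\sigma_1>0)=\chi(\sigma_1<0)$. Summing over $\sigma \in \mathfrak{B}_n$ and using that $\iota$ is a bijection then yields $T_{n+1,n+1}(x) = \widetilde{B}_n(x)$.

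The argument is essentially bookkeeping: the only real care required is matching each term of $\asc_D$ to the correct clause of Theorem~\ref{bijection}, and noting that the hypothesis $n\ge 2$ ensures all the boundary positions appearing in $f$ and $\widetilde{\des}_B$ refer to well-defined (and, in the right places, distinct) indices of the signed permutation.
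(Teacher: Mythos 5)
Your proposal is correct and is essentially the paper's proof run in the opposite direction: the paper first applies the reverse-and-negate involution to rewrite $\widetilde{\des}_B$ as the equivalent statistic $\widetilde{\stat}_B$ and then invokes parts \ref{cond:typeBtilde}, \ref{cond:typeD}, \ref{cond:typeA} of Theorem~\ref{bijection}, whereas you start from $T_{n+1,n+1}(x)$, pull back to $\mathfrak{B}_n$ via $\psi^{-1}$ using the same three parts, and then apply the involution. The two presentations use identical ingredients (the involution $\sigma\mapsto(-\sigma_n,\dots,-\sigma_1)$, the specialization $e_n<n$ arising from $e_{n+1}=n+1$, and the stated clauses of Theorem~\ref{bijection}), so this is the same argument.
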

\begin{proof}
It is easy to see under the involution $(\sigma_1, \ldots, \sigma_n) \mapsto
(-\sigma_n, \ldots, -\sigma_1)$, that $\widetilde{\des}_B$ has the same distribution over
$\Bn$ as the statistic
\[ \widetilde{\stat}_B \sigma = \chi(\sigma_n > 0) +  \left | \{1\le i \le n-1 \mid \sigma_i > \sigma_{i+1}\}\right | + \chi(\sigma_{2} + \sigma_{1} < 0).\]
From Theorem~\ref{bijection} part \ref{cond:typeD} it follows that 
$\sigma_{2} + \sigma_{1} < 0$ is equivalent to $e_1 + e_2/2 > 3/2$ and from part \ref{cond:typeBtilde} we have that 
$\sigma_{n} > 0$ if and only if 
$e_n < n$.
Note $e_n < n$ is equivalent to $e_n/n < 1 = (n+1)/(n+1)$.
So, $\widetilde{B}_n(x) = T_{n+1,n+1}(x)$.

\end{proof}
\begin{cor} For $n\ge 2$,
$\widetilde{B}_n(x)$ has only real roots.
\end{cor}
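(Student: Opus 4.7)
The plan is to exploit the identity $\widetilde{B}_n(x) = T_{n+1,n+1}(x)$ established in the immediately preceding theorem, together with the strong form of Theorem~\ref{thm:typeD}. Recall that Theorem~\ref{thm:typeD} does not merely assert the real-rootedness of the aggregate polynomial $T_n(x)$; rather, its statement (and proof by induction via Theorem~\ref{thm:compatible}) guarantees that each refined polynomial $T_{n,i}(x)$ has only real roots, for every $0 \le i < 2n$.

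With this in hand, the proof is essentially a one-line invocation. I would first verify that the index $n+1$ lies in the admissible range: for $n \ge 2$ we certainly have $0 \le n+1 < 2(n+1)$, so $T_{n+1,n+1}(x)$ is one of the refined polynomials to which Theorem~\ref{thm:typeD} applies. Therefore $T_{n+1,n+1}(x)$ has only real roots, and by the identity $\widetilde{B}_n(x) = T_{n+1,n+1}(x)$ the same is true of $\widetilde{B}_n(x)$.

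There is essentially no obstacle: all of the work has already been absorbed into establishing the finer statement in Theorem~\ref{thm:typeD} (which required handling the small cases $n=2, 3, 4$ by direct computation in order to initialize the induction driven by Theorem~\ref{thm:compatible}) and into the bijective analysis in Theorem~\ref{bijection} that produced the clean formula $\widetilde{B}_n(x) = T_{n+1,n+1}(x)$. If I wanted to be self-contained, the only thing to double-check is that the initial cases $n=2,3$ of the present corollary are indeed covered: for $n=2$ we need $T_{3,3}(x)$, and for $n=3$ we need $T_{4,4}(x)$, both of which were computed explicitly in the proof of Theorem~\ref{thm:typeD} and shown to be real-rooted (e.g., $T_{4,4}(x) = 2x(5x^2+14x+5)$ with roots approximately $\{-2.380, -0.420, 0\}$).
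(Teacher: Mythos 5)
Your proof is correct and follows exactly the paper's one-line argument: both invoke the strong form of Theorem~\ref{thm:typeD}, namely that every refined polynomial $T_{n,i}(x)$ (not just the sum $T_n(x)$) has only real roots, together with the identity $\widetilde{B}_n(x) = T_{n+1,n+1}(x)$. The extra bookkeeping you provide (verifying the index range and pointing to the explicit small cases $T_{3,3}$ and $T_{4,4}$) is sound and merely spells out what the paper leaves implicit.
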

\begin{proof}
Follows from the fact that $T_{n,i}(x)$ have only real roots (see Theorem~\ref{thm:typeD}).

\end{proof}

As we mentioned earlier  there is an analogous conjecture for type $D$ which
remains unsolved.
\begin{conj}[\cite{DPS09}] For $\sigma \in \Dn,$ let \[\widetilde{\des}_D \sigma = \chi(\sigma_1+\sigma_2 <0) +  \left | \{1\le i \le n-1 \mid \sigma_i > \sigma_{i+1}\}\right | + \chi(\sigma_{n-1} + \sigma_{n} > 0)\,.\] 
Then the affine Eulerian polynomial of type $D$
\[ \sum_{\sigma \in \Dn} x^{\widetilde{\des}_D}\]
has only real roots. 
\end{conj}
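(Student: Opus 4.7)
The plan is to adapt the strategy used for Brenti's type $D$ conjecture (Theorem~\ref{thm:typeD}) and the affine type $B$ case, but at the cost of an additional refinement index.

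First, I would verify that the sign-flipping involution of Lemma~\ref{lem:involution}---which toggles the sign of the unique entry of absolute value $1$---also preserves $\widetilde{\des}_D$ on $\Bn$ for $n\ge 2$, since each of the three terms $\chi(\sigma_1+\sigma_2<0)$, the usual descent count, and $\chi(\sigma_{n-1}+\sigma_n>0)$ depends only on sign comparisons in which $\pm 1$ behaves identically against any value of absolute value $\ge 2$. Hence
\[\sum_{\sigma\in\Bn}x^{\widetilde{\des}_D\,\sigma}\;=\;2\sum_{\sigma\in\Dn}x^{\widetilde{\des}_D\,\sigma},\]
and it suffices to prove the left-hand side, call it $\widetilde{T}_n(x)$, is real-rooted. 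Applying the bijection $\psi$ from Theorem~\ref{bijection}, parts \ref{cond:typeD}, \ref{cond:typeA}, and \ref{cond:typeDtilde} translate $\widetilde{\des}_D$ into the statistic
\[\asc_D\e\;+\;\chi\!\left(\tfrac{e_{n-1}}{n-1}+\tfrac{e_n}{n}<\tfrac{2n-1}{n}\right)\]
on $\e\in\I_n^{(2,4,\ldots,2n)}$.

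Unlike the type $B$ affine case, where the extra term $\chi(\sigma_n>0)$ collapsed to the single-coordinate $\chi(e_n<n)$ and could be absorbed as one more ascent of a length-$(n{+}1)$ extended sequence (yielding $\widetilde{B}_n(x)=T_{n+1,n+1}(x)$), here the extra condition \emph{couples} $e_{n-1}$ with $e_n$ and no such absorption is available. To handle the coupling I would introduce the doubly-refined polynomials
\[T_{n,i,j}(x)\;=\;\sum_{\substack{\e\in\I_n^{(2,4,\ldots,2n)}\\ e_{n-1}=i,\ e_n=j}} x^{\asc_D \e},\qquad 0\le i<2(n-1),\ 0\le j<2n,\]
so that, with $k_j:=\lceil(n-1)(2n-1-j)/n\rceil$,
\[\widetilde{T}_n(x)\;=\;\sum_{j=0}^{2n-1} U_{n,j}(x),\qquad U_{n,j}(x)\,:=\,x\!\sum_{i=0}^{k_j-1}T_{n,i,j}(x)+\sum_{i=k_j}^{2n-3}T_{n,i,j}(x).\]
The goal becomes a two-index analogue of Theorem~\ref{thm:compatible} ensuring that $\{U_{n,j}\}_{j=0}^{2n-1}$ is a compatible family; its sum $\widetilde{T}_n(x)$ would then be real-rooted by Lemma~\ref{lem:ChudnovskySeymour}. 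A natural path is to derive a recurrence for $T_{n,i,j}$ by conditioning on $e_{n-2}$, verify small base cases $n\le 4$ as in the proof of Theorem~\ref{thm:typeD}, and induct on $n$ via a two-index variant of the compatibility-preservation mechanism of Theorem~\ref{thm:compatible}.

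The main obstacle is precisely this compatibility for the doubly-indexed family. The Chudnovsky--Seymour lemma reduces the final step to pairwise compatibility of the $U_{n,j}$, but verifying it is delicate: the cut $k_j$ varies with $j$, and the partial sums $\sum_{i<k_j}T_{n,i,j}$ and $\sum_{i\ge k_j}T_{n,i,j}$ mix entries from different rows of the refinement in a way that is not preserved by a single application of Theorem~\ref{thm:compatible}. A cleaner alternative, if one could find it, would be a bijection from $\Bn$ under $\widetilde{\des}_D$ onto a subset of some $\I_m^{(\mathbf{s})}$ cut out by a \emph{single}-coordinate constraint, analogous to $\widetilde{B}_n(x)=T_{n+1,n+1}(x)$ for type $B$; real-rootedness would then follow immediately from Theorem~\ref{thm:typeD}. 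Absent such a reformulation, the essential new difficulty beyond Theorem~\ref{thm:compatible} is the ``two-ended'' structure of $\widetilde{T}_n(x)$, with special two-variable conditions at both position $0$ and position $n-1$, which breaks the one-sided recursion that made the earlier arguments work.
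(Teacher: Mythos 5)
Be aware that this statement is a conjecture the paper explicitly leaves open: immediately before displaying it the authors write that ``there is an analogous conjecture for type $D$ which remains unsolved,'' and what follows it is not a proof but a Corollary reformulating the conjecture on $\s$-inversion sequences, namely
\[2\sum_{\sigma\in\Dn}x^{\widetilde{\des}_D\,\sigma}=\sum_{\e\in\I_n^{(2,4,\dotsc,2n)}}x^{\widetilde{\asc}_D\,\e},\quad\text{where}\quad\widetilde{\asc}_D\,\e=\asc_D\e+\chi\!\left(\frac{e_{n-1}}{n-1}+\frac{e_n}{n}<\frac{2n-1}{n}\right).\]
Your reduction via the involution of Lemma~\ref{lem:involution} and the bijection $\psi$ of Theorem~\ref{bijection} (parts~\ref{cond:typeD}, \ref{cond:typeA}, \ref{cond:typeDtilde}) reproduces exactly this reformulation, including the easily-checked but unstated fact that swapping $\pm1$ preserves $\widetilde{\des}_D$; up to this point you and the paper agree.

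Beyond the reformulation, your proposal is not a proof, as you yourself concede. The doubly-refined polynomials $T_{n,i,j}$ and the decomposition $\widetilde{T}_n=\sum_j U_{n,j}$ are a plausible reorganization, but the key ingredient---a two-index analogue of Theorem~\ref{thm:compatible} that would certify $\{U_{n,j}\}_j$ as a compatible family---is precisely what is missing, and your diagnosis of why the single-index machinery breaks (the cut $k_j$ varies with $j$ and mixes inner-index summands, and the statistic imposes two-variable boundary conditions at \emph{both} ends, destroying the one-sided recursion of Theorem~\ref{thm:compatible}) is accurate. Since the paper offers no proof either, this is not a defect relative to the source: you have correctly reproduced the paper's partial progress and correctly identified the obstruction that kept the authors from closing the type~$D$ affine case.
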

By Theorem \ref{bijection} (parts \ref{cond:typeD}, 
\ref{cond:typeA} and \ref{cond:typeDtilde}) we can at least express the type 
$D$ affine Eulerian polynomial in terms of ascent statistics on inversion sequences.
\begin{cor} The type $D$ affine Eulerian polynomial satisfies
\[
2 \sum_{\sigma \in \Dn} x^{\widetilde{\des}_D \sigma} = 
\sum_{\e \in {\I_n^{(2,4,\dotsc,2n)}}} x^{\widetilde{\asc}_D e},
\]
where the type $D$ affine ascent statistic for $\e \in{\I_n^{(2,4,\dots,2n)}}$ is given by
\begin{align*}
\widetilde{\asc}_D\, \e =& \;\chi(e_1 + e_2/2 \ge 3/2) + \left |  \left\{1\le i\le n-1 \;\Big|\; \frac{e_i}{i} < \frac{e_{i+1}}{i+1}\right\} \right |  \\ &+
\chi( e_{n-1}/(n-1) + e_n/n < (2n-1)/n).
\end{align*}
\end{cor}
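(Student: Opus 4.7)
The plan is to reduce to signed permutations via the same ``parity doubling" trick used for Lemma~\ref{lem:involution}, and then translate to inversion sequences via the bijection $\psi$ of Theorem~\ref{bijection}. Note that the statistic $\widetilde{\des}_D$ is defined by a formula in terms of the window $(\sigma_1,\dots,\sigma_n)$ that makes sense for every $\sigma \in \Bn$, not only for $\sigma \in \Dn$, so we may extend it to all of $\Bn$.

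First I would establish the identity
\[
\sum_{\sigma \in \Bn} x^{\widetilde{\des}_D \sigma} \;=\; 2 \sum_{\sigma \in \Dn} x^{\widetilde{\des}_D \sigma}
\]
by exhibiting a sign-parity reversing involution $\iota$ on $\Bn$ that preserves $\widetilde{\des}_D$. The natural candidate, exactly as in Lemma~\ref{lem:involution}, is the map that changes the sign of the (unique) entry $\sigma_j$ with $|\sigma_j|=1$. This swaps $\Dn$ with $\Bn\setminus \Dn$. To check that $\widetilde{\des}_D$ is preserved, note that flipping $\sigma_j=\pm 1$ to $\mp 1$ only can affect the three summands involving positions $j{-}1,j,j{+}1$ (for the middle term) together with the two boundary $\chi$-terms when $j\in\{1,2,n{-}1,n\}$. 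Since every other entry has absolute value at least $2$, each of the comparisons $\sigma_{j-1} > \sigma_j$, $\sigma_j > \sigma_{j+1}$, $\sigma_1+\sigma_2<0$, $\sigma_{n-1}+\sigma_n>0$ depends only on the sign of the neighbor (resp.\ on whether the relevant two-term sum is $\ge 1$ in absolute value), and this sign is unchanged by adding $\pm 2$ to an entry of absolute value $1$. A brief case check handles all of these, completing the doubling identity.

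Next I would apply the bijection $\psi: \Bn \to \I_n^{(2,4,\dots,2n)}$ from Theorem~\ref{bijection}. For $\e = \psi(\sigma)$, parts \ref{cond:typeD}, \ref{cond:typeA}, and \ref{cond:typeDtilde} of that theorem translate the three pieces of $\widetilde{\des}_D \sigma$ into exactly the three pieces of $\widetilde{\asc}_D \e$:
\begin{align*}
\chi(\sigma_1+\sigma_2 <0) &\;=\; \chi(e_1 + e_2/2 \ge 3/2), \\
\bigl|\{1\le i \le n-1 : \sigma_i > \sigma_{i+1}\}\bigr| &\;=\; \bigl|\{1\le i \le n-1 : e_i/i < e_{i+1}/(i+1)\}\bigr|, \\
\chi(\sigma_{n-1}+\sigma_n>0) &\;=\; \chi(e_{n-1}/(n-1)+e_n/n < (2n-1)/n).
\end{align*}
Summing over $\sigma \in \Bn$ and using the bijectivity of $\psi$ gives
\[
\sum_{\sigma \in \Bn} x^{\widetilde{\des}_D \sigma} \;=\; \sum_{\e \in \I_n^{(2,4,\dots,2n)}} x^{\widetilde{\asc}_D \e},
\]
and combining with the doubling identity yields the claim.

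The only delicate step is the case analysis for the involution $\iota$; in particular one must be careful that the two boundary terms $\chi(\sigma_1+\sigma_2<0)$ and $\chi(\sigma_{n-1}+\sigma_n>0)$ do not flip when $j \in \{1,2\}$ or $j \in \{n{-}1,n\}$. This is where the hypothesis $n \ge 2$ is used, ensuring that the neighbor of the $\pm 1$ entry has absolute value at least $2$ so that a perturbation by $2$ cannot change the sign of the relevant sum. Once this is checked, the rest of the argument is a direct translation via $\psi$.
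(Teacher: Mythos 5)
Your proposal is correct and follows essentially the same route the paper intends: combine the parity-doubling involution (swap $\pm 1$, as in Lemma~\ref{lem:involution}) with the translation furnished by parts~\ref{cond:typeD}, \ref{cond:typeA}, and \ref{cond:typeDtilde} of Theorem~\ref{bijection}. The one thing you make explicit that the paper leaves implicit is the verification that the involution preserves the \emph{affine} statistic $\widetilde{\des}_D$ (not just $\des_D$), and your case check there is sound: since every neighbor of the $\pm 1$ entry has absolute value at least $2$, neither the comparisons $\sigma_i > \sigma_{i+1}$ nor the boundary sums $\sigma_1+\sigma_2$, $\sigma_{n-1}+\sigma_n$ can change sign when that entry flips.
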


\subsection{$k$-ary words}
The {\em $k$-ary words of length $n$} are the elements of the set
$\{0,1, \ldots, k-1\}^n$. Define an ascent statistic for
$w \in \{0,1, \ldots, k-1\}^n $ by
\[
\asc w = |\left\{i \in \{0, 1, \ldots, n-1\} \mid w_i < w_{i+1}\right\}| ,
\]   
with the convention that $w_0=0$.
\begin{cor}
The ascent polynomial for $k$-ary words,
\[
\sum_{w \in  \{0,1, \ldots, k-1\}^n} x^{\asc w},
\] has all real roots.
\end{cor}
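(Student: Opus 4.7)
The plan is to recognize $k$-ary words as a special case of $\s$-inversion sequences and invoke Theorem~\ref{thm:main} directly. Specifically, I would take $\s = (k, k, \ldots, k)$ (the constant sequence of length $n$). Then by definition
\[
\I_n^{(\s)} = \{(e_1, \ldots, e_n) \in \integers^n \mid 0 \le e_i < k\} = \{0,1,\ldots,k-1\}^n,
\]
so the underlying sets coincide under the identification $w = \e$.

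Next I would check that the ascent statistic from \eqref{Ascdef} agrees with the one defined for $k$-ary words. Using the convention $e_0 = 0$, $s_0 = 1$, the index $0$ lies in $\A \e$ iff $0/1 < e_1/k$, i.e.\ iff $w_0 = 0 < w_1$, matching the $k$-ary word convention $w_0 = 0$. For $1 \le i \le n-1$, since $s_i = s_{i+1} = k$, the condition $e_i/s_i < e_{i+1}/s_{i+1}$ collapses to $e_i < e_{i+1}$, which is exactly $w_i < w_{i+1}$. Hence $\asc \e = \asc w$ for every $\e \in \I_n^{(\s)}$ under the identification.

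Therefore
\[
\sum_{w \in \{0,1,\ldots,k-1\}^n} x^{\asc w} = \E_n^{(k,k,\ldots,k)}(x),
\]
and applying Theorem~\ref{thm:main} with $\s = (k, k, \ldots, k)$ yields the real-rootedness. There is no real obstacle here; the only thing to be careful about is the boundary convention at $i = 0$, which turns out to be compatible between the two settings precisely because $s_0 = 1$ and $w_0 = 0$ play the same role.
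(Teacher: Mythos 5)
Your proof is correct and takes essentially the same approach as the paper: both identify $k$-ary words with $\I_n^{(k,k,\ldots,k)}$ via the identity map, observe that the ascent statistics coincide, and invoke Theorem~\ref{thm:main}. You spell out the boundary check at $i=0$ and the collapse of the ratio condition to $e_i < e_{i+1}$, which the paper leaves implicit with ``Clearly.''
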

\begin{proof}
Clearly, using the identity mapping from $\{0,1, \ldots, k-1\}^n $ to $ \I_{n}^{(k,k, \ldots, k)}$,
$\sum_{w \in \{0,1, \ldots, k-1\}^n } x^{\asc w} \ = \ 
E_n^{(k,k,\dots, k)}(x)\,.$
So, the result follows by setting $\s=(k,k, \ldots, k)$ in  Theorem~\ref{thm:main}.

\end{proof}
It was shown in \cite[Corollary 8]{SS} using Ehrhart theory that
\[
\frac{\sum_{\e \in \I_{n}^{(k,k, \ldots, k)}} x^{\asc \e}}{(1-x)^{n+1}} = 
\sum_{t \geq 0} \binom{n+kt}{n} x^t. 
\]

\begin{rem}
It was pointed out in \cite[Section 5]{DiaconisFulman} that
the above series arises as the Hilbert series of the $k$th Veronese
embedding of the coordinate ring of the full projective space 
$\mathbb{C}[x_1, \dotsc, x_{n+1}]$.
\end{rem}

\subsection{Excedances and number of cycles in permutations}
For a permutation $\pi \in \Sn$, the {\em excedance} number of $\pi$,
$\exc(\pi)$, is  defined by
\[
\exc(\pi) = |\left\{i \in \{1,2, \ldots, n\} \mid \pi(i) > i\right\}|,\]
 and the {\em cycle number} of $\pi$, $\cyc(\pi)$, is  the number of cycles in the disjoint cycle representation of
$\pi$.
Let
\[
A^{\exc,\cyc}_n(x,y) =  \sum_{\pi \in \Sn} x^{\exc \pi} y^{\cyc \pi}.
\]
It was proven by Brenti in \cite[Theorem 7.5]{brenti2000} that $A^{\exc,\cyc}_n(x,y)$ has all roots real for every positive $y \in \reals$.  This  was extended by 
Br\"and\'en in \cite[Theorem 6.3]{MR2218995} to include values of $y$ for which $n+y \le 0$.

In \cite{gopal},
$A^{\exc,\cyc}_n(x,1/k)$ was shown to be related to inversion sequences.  This will
allow us to deduce the real-rootedness in this special case (when $y = 1/k$) from  Theorem~\ref{thm:main}.
\begin{cor}
For every positive integer, $k$, the polynomial
\[
A^{\exc,\cyc}_n(x,1/k) =  \sum_{\pi \in \Sn} x^{\exc \pi} k^{-\cyc \pi}
\]
has only real roots.
\end{cor}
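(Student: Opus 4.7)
The plan is to recognize $A_n^{\exc,\cyc}(x,1/k)$, up to a positive scalar, as an $\s$-Eulerian polynomial for a specific arithmetic sequence, and then invoke Theorem~\ref{thm:main}. As a first step I would clear denominators by multiplying through by the positive constant $k^n$, which does not alter the set of real roots, obtaining the polynomial with nonnegative integer coefficients
\[
k^n A_n^{\exc,\cyc}(x,1/k) \ = \ \sum_{\pi \in \Sn} x^{\exc \pi} k^{n-\cyc \pi}.
\]

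Next I would appeal to the inversion sequence encoding from \cite{gopal}, which provides a weight-preserving correspondence between $(\Sn, \exc)$ (weighted by $k^{n-\cyc \pi}$) and $(\I_n^{(\s)}, \asc)$ for the arithmetic sequence $\s = (1, k+1, 2k+1, \dotsc, (n-1)k+1)$, yielding the identity
\[
\sum_{\pi \in \Sn} x^{\exc \pi} k^{n-\cyc \pi} \ = \ \E_n^{(\s)}(x).
\]
A useful consistency check is the specialization $k=1$, where $\s$ reduces to $(1,2,\dotsc,n)$ and the identity becomes the classical one relating $\exc$ on $\Sn$ to $\asc$ on $\I_n^{(1,2,\dotsc,n)}$ via the equidistribution of $\exc$ and $\des$ together with Lemma~\ref{Desinv}; the cardinality match $\sum_{\pi \in \Sn} k^{n-\cyc \pi} = \prod_{i=1}^{n-1}(ik+1) = |\I_n^{(\s)}|$ for general $k$ is also easy to verify and supports the choice of $\s$.

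Finally, applying Theorem~\ref{thm:main} to this $\s$ shows that $\E_n^{(\s)}(x)$ has only real roots, and dividing back by the positive constant $k^n$ gives the desired conclusion for $A_n^{\exc,\cyc}(x,1/k)$. The real content of the argument sits entirely in the correspondence imported from \cite{gopal}; once that is in hand, the real-rootedness is an immediate consequence of the main theorem, so no essential technical obstacle arises in the final step.
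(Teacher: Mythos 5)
Your proposal is essentially identical to the paper's proof: both clear the $k^n$ denominator, invoke the inversion-sequence correspondence from \cite{gopal} to identify $k^n A_n^{\exc,\cyc}(x,1/k)$ with $\E_n^{(\s)}(x)$ for the appropriate arithmetic sequence, and then apply Theorem~\ref{thm:main}. One minor point: the paper writes $\s=(k+1,2k+1,\dotsc,(n-1)k+1)$ while you write $\s=(1,k+1,\dotsc,(n-1)k+1)$; your version, which includes the leading $1$, is the one that makes the cardinalities match as you verified, so your bookkeeping is if anything the more careful of the two.
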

\begin{proof}
Let  $\s=(k+1,2k+1, \ldots,(n-1)k+1)$.
It was shown in \cite[Theorems 3 and 6]{gopal},  that for every positive integer $k$,
\[ \sum_{\e \in \I_{n}^{(\s)}} x^{\asc \e} = k^n A^{\exc,\cyc}_n(x,1/k).\]
The corollary follows from Theorem~\ref{thm:main} with $\s=(k+1,2k+1, \ldots,(n-1)k+1)$.

\end{proof}

\begin{rem}
It is well-known that the pair of statistics $(\exc, \cyc)$ is equidistributed with $(\des, \mathrm{lrm})$,
where $\mathrm{lrm}$ is the number of left-to-right minima in a permutation.
\end{rem}

\subsection{Multiset permutations}
Simion \cite[Section 2]{MR728500} showed that
for any $n$-element multiset, $M$, the descent polynomial for  the set of permutations,
$P(M)$, of $M$ has only real roots.  A {\em descent} in a multiset permutation,
 $\pi \in P(M)$ is an index $i \in \{1,2, \ldots, n-1\}$ such that
$\pi_i > \pi_{i+1}$.

When $M=\{1,1,2,2, \ldots, n,n\}$, there is a connection with
inversion sequences.  Let $\s$ be the sequence  $\s=(1,1,3,2,5,3,7,4, \ldots)$,
where for $i \geq 1$, $s_{2i}=i$ and $s_{2i-1}=2i-1$.
Observe that the number of  $\s$-inversion sequences of length $2n$ is the same as the number of permutations of $\{1,1,2,2, \ldots, n,n\}$:
\[
\left |\I_{2n}^{(1,1,3,2,5,3,7,4, \ldots, 2n-1,n)} \right | \ = \ \frac{(2n)!}{2^n} =  \ \left |P(\{1,1,2,2, \ldots, n,n\})\right |.
\]
We discovered
that the distribution of ascents on the first set is equal to the distribution of descents on the second set.
\begin{thm}
\[
\sum_{\pi \in P(\{1,1,2,2, \ldots, n,n\})} x^{\des \pi}
= \sum_{\e \in \I_{2n}^{(1,1,3,2,5,3, \ldots, 2n-1, n)}} x^{\asc \e}.
\]
\label{multiset_perms}
\end{thm}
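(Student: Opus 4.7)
My plan is to combine MacMahon's classical formula for the descent polynomial of a multiset permutation with the lecture-hall polytope interpretation of the $\s$-Eulerian polynomial recalled earlier in the paper, reducing the claim to a lattice-point identity, and then prove that identity.

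MacMahon's formula applied to $M=\{1,1,\dotsc,n,n\}$ gives
\[
\sum_{\pi\in P(M)}x^{\des\pi} \;=\; (1-x)^{2n+1}\sum_{t\ge0}\binom{t+2}{2}^{\!n}x^t,
\]
while the $\s$-Eulerian polynomial, being the $\h^*$-polynomial of $\PP^{(\s)}_{2n}$, satisfies
\[
\poly^{(\s)}_{2n}(x) \;=\; (1-x)^{2n+1}\sum_{t\ge0}\bigl|\,t\PP^{(\s)}_{2n}\cap\mathbb{Z}^{2n}\,\bigr|\,x^t.
\]
Equating coefficients in these power series reduces Theorem~\ref{multiset_perms} to the lattice-point identity
\[
\bigl|\,t\PP^{(\s)}_{2n}\cap\mathbb{Z}^{2n}\,\bigr| \;=\; \binom{t+2}{2}^{\!n} \quad\text{for every integer } t\ge 0.
\]

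To establish this identity I would construct a bijection between integer chains $(\la_1,\dotsc,\la_{2n})$ with $0\le \la_1/s_1\le\dotsb\le \la_{2n}/s_{2n}\le t$ and $n$-tuples $((a_i,b_i))_{i=1}^n$ of independent integer pairs with $0\le a_i\le b_i\le t$. Decomposing $\la_i = s_ic_i + r_i$ with $c_i = \lfloor\la_i/s_i\rfloor$ and $0\le r_i<s_i$, the chain splits into a weakly increasing integer sequence $0\le c_1\le\dotsb\le c_{2n}\le t$ together with, on each maximal block on which the $c_i$ are constant, a rational chain $r_i/s_i\le r_{i+1}/s_{i+1}$. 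For each $i$ the pair $(a_i,b_i)$ is then assembled from the integer parts $(c_{2i-1},c_{2i})$ and an appropriate fractional-slot datum extracted from the blocks containing indices $2i-1$ and $2i$.

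The main obstacle is showing that this assembly is exactly bijective: a constant-$c$ block can straddle the boundary between pairs $(2i-1,2i)$ and $(2i+1,2i+2)$, so the fractional data of consecutive pairs is coupled, and one must track carefully how the count within each block distributes across the pairs it touches. A more robust alternative is to prove the lattice-point identity by induction on $n$: condition on the outermost pair $(\la_{2n-1},\la_{2n})$ and introduce the refined count $G(y) = |\{(\la_1,\dotsc,\la_{2n-2}) : \text{chain with }\la_{2n-2}/(n-1)\le y\}|$, reducing the inductive step to a summation identity of binomial coefficients that can be verified by standard manipulations.
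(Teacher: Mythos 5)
Your reduction is the same one the paper uses: divide both sides by $(1-x)^{2n+1}$, invoke MacMahon's formula with all $p_i=2$ on the multiset side, and invoke the Ehrhart/$\h^*$-polynomial interpretation (Lemma~\ref{SSlem}) on the inversion-sequence side, so that the claim becomes the equality of $\sum_{t\ge 0}\binom{t+2}{2}^n x^t$ with the Ehrhart series of $\PP^{(\s)}_{2n}$. Up to that point the argument is correct, and $\binom{t+2}{2} = (t+1)(t+2)/2$ matches the paper's expression.

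The gap is that the lattice-point identity
\[
\bigl|\,t\PP^{(\s)}_{2n}\cap\mathbb{Z}^{2n}\,\bigr| = \binom{t+2}{2}^{n}
\]
is stated but not proven. This is precisely the content of Savage--Schuster \cite[Theorem~14]{SS}, which the paper cites and you attempt to rederive. Your bijection sketch (split off $c_i = \lfloor\la_i/s_i\rfloor$, leaving rational chains of fractional parts on constant-$c_i$ blocks) stalls at the very point you flag: a constant-$c$ block can span consecutive index pairs $(2i-1,2i)$ and $(2i+1,2i+2)$, so the block enumeration does not manifestly factor over $i$, and no rule for assigning the ``fractional-slot datum'' is given. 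The proposed inductive alternative has an additional difficulty you do not address: the refined count $G(y)$ is taken at $y = \la_{2n-1}/(2n-1)$, which is generally not an integer, so the inductive hypothesis (an Ehrhart count at an integer dilation $t$) does not apply to it. One would need a strengthened rational-dilation version of the statement, and the concluding binomial identity is never identified. As written, neither route closes the argument. Citing \cite[Theorem~14]{SS} for the Ehrhart-series evaluation would turn your outline into the paper's proof.
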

\begin{proof}
It was shown in 
\cite[Theorem 14]{SS} that
\[
\sum_{t \geq 0}\left ( \frac{(t+1)(t+2)}{2} \right )^{ n} x^t \ = \
\frac{\sum_{\e \in \I_{2n}^{(1,1,3,2,5,3, \ldots, 2n-1,n)}} x^{\asc \e}}
{(1-x)^{2n+1}}.
\]
MacMahon \cite[Volume 2, Chapter IV, p. 211, \S462]{MR2417935} showed that
\[
\frac{\sum_{\pi \in P(\{1^{p_1}, \dotsc, n^{p_n}\})} x^{\des \pi}}{(1-x)^{1+\sum_i p_i}} =
\sum_{t\ge 0}\frac{(t+1)\dotsb (t+p_1) \dotsb (t+1)\dotsb(t+p_n)}{p_1! \cdot \dotsb \cdot p_n!} \,x^t\,.
\]
In particular, 
when $p_i = 2$ for all $i$, this implies
\[
\frac{\sum_{\pi \in P(\{1,1,2,2, \ldots, n,n\})} x^{\des \pi}}{(1-x)^{2n+1}} \ = \   \sum_{t \geq 0}\left ( \frac{(t+1)(t+2)}{2} \right )^{ n} x^t. 
\]

\end{proof}
We thus obtain the following special case of Simion's result as a corollary of Theorem~\ref{thm:main}.
\begin{cor}
The polynomial
\[
\sum_{\pi \in P(\{1,1,2,2, \ldots, n,n\})} x^{\des \pi}
\]
has only real roots.
\end{cor}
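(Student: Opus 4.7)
The plan is essentially a one-line deduction from the two main tools already assembled in this section. First, I would invoke Theorem~\ref{multiset_perms}, which establishes the identity
\[
\sum_{\pi \in P(\{1,1,2,2, \ldots, n,n\})} x^{\des \pi}
= \sum_{\e \in \I_{2n}^{(1,1,3,2,5,3, \ldots, 2n-1, n)}} x^{\asc \e}.
\]
This rewrites the multiset descent polynomial as an $\s$-Eulerian polynomial $\E_{2n}^{(\s)}(x)$ for the specific sequence $\s=(1,1,3,2,5,3,7,4,\ldots,2n-1,n)$ of positive integers.

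Then I would apply Theorem~\ref{thm:main} with this choice of $\s$. Since $\s$ is a sequence of positive integers and $2n$ is a positive integer, the theorem directly yields that $\E_{2n}^{(\s)}(x)$ has only real roots. Combining with the identity above completes the proof.

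There is no real obstacle here: the heavy lifting was done in Theorem~\ref{thm:main} (real-rootedness of $\s$-Eulerian polynomials via compatibility) and in Theorem~\ref{multiset_perms} (the generating-function identity derived from the Savage--Schuster formula for $\I_{2n}^{(\s)}$ combined with MacMahon's classical formula for descents in multiset permutations). The corollary is simply the juxtaposition of these two facts, and merits only a brief proof.
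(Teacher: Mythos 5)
Your proposal is correct and matches the paper's own (unstated but clearly intended) argument exactly: the corollary is presented immediately after Theorem~\ref{multiset_perms} as a direct consequence of that identity together with Theorem~\ref{thm:main}. Nothing more is needed.
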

The sequence $\s=(1,1,3,2,5,3,7,4, \ldots)$ was  studied in \cite{CSS}, where it
was shown  that the
$\s$-lecture hall partitions lead to a new finite model for
the {\em Little G\"ollnitz identities}. 
There was a companion sequence, $\s=(1,4,3,8,5,12, \ldots, 2n-1,4n)$ defined by
$s_{2i}=4i$, $s_{2i+1}=2i+1$, which we now consider in the context of multiset permutations.

Let $P^{\pm}(\{1,1,2,2, \ldots, n,n\})$ be the set of all \emph{signed} permutations of the multiset
$\{1,1,2,2, \ldots, n,n\}$.  The elements are those of the form
$(\pm \pi_1, \pm \pi_2, \ldots, \pm \pi_{2n})$, where
$( \pi_1,  \pi_2, \ldots,  \pi_{2n}) \in P(\{1,1,2,2, \ldots, n,n\})$.
Note that
\[
\left | P^{\pm}(\{1,1,2,2, \ldots, n,n\}) \right | \ = \ \frac{(2n)!}{2^n} 2^{2n} \ = \ 2^n(2n)! \  = \ 
\left | \I_{2n}^{(1,4,3,8, \ldots, 2n-1,4n)} \right | .
\]
From our experiments it appears that
distribution of descents on the first set is equal to the distribution of ascents on the second set, and we make that conjecture.
\begin{conj}
\[
\sum_{\pi \in P^{\pm}(\{1,1,2,2, \ldots, n,n\})} x^{\des \pi}
= \sum_{\e \in \I_{2n}^{(1,4,3,8, \ldots, 2n-1, 4n)}} x^{\asc \e}.
\]
\end{conj}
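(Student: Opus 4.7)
The plan is to imitate the proof of Theorem~\ref{multiset_perms} by realizing both sides of the conjectured identity as the numerator over $(1-x)^{2n+1}$ of a single rational generating function. Writing $\s=(1,4,3,8,\ldots,2n-1,4n)$, the goal is to establish
\[
\frac{\E_{2n}^{(\s)}(x)}{(1-x)^{2n+1}} \;=\; \sum_{t \ge 0}\bigl((t+1)(2t+1)\bigr)^n x^t \;=\; \frac{\sum_{\pi\in P^{\pm}(\{1,1,\ldots,n,n\})} x^{\des \pi}}{(1-x)^{2n+1}},
\]
where $\des$ must be read in the type-$B$ sense (with $\pi_0=0$, so position $0$ is a descent iff $\pi_1<0$) for the conjecture to hold. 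The $n=1$ sanity check supports this interpretation: the four elements of $P^{\pm}(\{1,1\})$ have $\des$-values $0,1,1,1$, yielding $1+3x=\E_2^{(1,4)}(x)$.

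For the first equality, I would invoke the fact (from \cite{SS}) that $\E_{2n}^{(\s)}(x)$ is the $\h^*$-polynomial of the $\s$-lecture hall polytope $\PP_{2n}^{(\s)}$, reducing matters to the Ehrhart formula
\[
L_{\PP_{2n}^{(\s)}}(t) \;=\; \bigl((t+1)(2t+1)\bigr)^n.
\]
I have verified this for $n=1$ (a direct count of integer points in $\{0\le \la_1 \le \la_2/4\le t\}$ gives $(t+1)(2t+1)$) and for $n=2,\,t=1$ (giving $36=6^2$). The general proof should parallel the generating-function argument of \cite[Theorem~14]{SS}, where the unsigned analog $L_{\PP_{2n}^{(1,1,3,2,\ldots,2n-1,n)}}(t) = \binom{t+2}{2}^n$ was established; the modification needed is bookkeeping for the four-fold rescaling of each even-indexed modulus, $s_{2i}=i\mapsto 4i$.

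For the second equality, I would derive a MacMahon-style formula for signed multiset permutations, in analogy with the role played by MacMahon's classical identity in the proof of Theorem~\ref{multiset_perms}. The key combinatorial interpretation is $(t+1)(2t+1)=\#\{(a,b)\in\integers^2 \mid -t\le a\le b\le t\}$, so $((t+1)(2t+1))^n$ counts $n$-tuples of such pairs, one per letter $1,2,\ldots,n$; the standard $P$-partition dictionary then rewrites this sum as $(1-x)^{-(2n+1)}\sum_\pi x^{\des\pi}$. An appealing alternative is to construct an explicit bijection $\Psi\colon P^{\pm}(\{1,1,\ldots,n,n\})\to \I_{2n}^{(\s)}$ that carries $\des$ to $\asc$, in the spirit of the bijections $\phi$ (Lemma~\ref{Desinv}) and $\psi$ (Theorem~\ref{bijection}); the sign information would be absorbed into the enlarged even-indexed coordinate ranges (from $\{0,\ldots,i-1\}$ to $\{0,\ldots,4i-1\}$, an increase of two bits per pair).

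The main obstacle is the Ehrhart computation. The chain inequalities $\la_{2i}/(4i)\le \la_{2i+1}/(2i+1)$ genuinely couple consecutive pairs $(\la_{2i-1},\la_{2i})$ and $(\la_{2i+1},\la_{2i+2})$, so the clean product $((t+1)(2t+1))^n$ must arise from a nontrivial cancellation rather than any factorization of the polytope. If instead the bijective route is taken, the main challenge shifts to the case analysis for descent-preservation on pairs $(\pi_i,\pi_{i+1})$ with $|\pi_i|=|\pi_{i+1}|$---a case absent from Theorem~\ref{bijection}, where in $B_n$ each absolute value occurs at most once.
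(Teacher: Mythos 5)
This statement is not proved in the paper; it is stated explicitly as an open conjecture, and the paper's remarks immediately after it already record the one ingredient you treat as the hard part. You write that ``the main obstacle is the Ehrhart computation,'' but the identity
\[
\sum_{t \ge 0}\bigl((t+1)(2t+1)\bigr)^{n} x^t \;=\;
\frac{\E_{2n}^{(1,4,3,8,\ldots,2n-1,4n)}(x)}{(1-x)^{2n+1}}
\]
is quoted in the paper from \cite[Theorem 13]{SS}, right below the conjecture. So there is nothing to redo on the inversion-sequence side, and your worry about the chain inequalities $\la_{2i}/(4i)\le \la_{2i+1}/(2i+1)$ ``coupling'' consecutive pairs is misplaced energy.

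What is genuinely open is the other half: a MacMahon-style formula
$\sum_{\pi\in P^{\pm}(\{1,1,\ldots,n,n\})} x^{\des\pi}/(1-x)^{2n+1} = \sum_{t\ge 0}\bigl((t+1)(2t+1)\bigr)^{n} x^t$
with $\des$ in the type-$B$ sense, or equivalently an explicit descent-to-ascent bijection. Your observation that $(t+1)(2t+1)=\#\{(a,b)\in\integers^2 \mid -t\le a\le b\le t\}$ is the right starting point, and your $n=1$ check is correct, but the remainder of the proposal is a sketch that does not close this. The $P$-partition route would require a signed analogue of the sorting/tie-breaking argument (and a decision rule when a coordinate equals $0$, since the letters of a signed permutation are $\pm i$, never $0$), together with a verification that the type-$B$ descent at position $0$ and the descents between repeated absolute values $|\pi_i|=|\pi_{i+1}|$ come out right; none of this is carried out. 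The bijective route, as you yourself note, faces new cases absent from Theorem~\ref{bijection}. In short, the proposal correctly frames the problem but does not prove the conjecture; the part you flag as the obstacle is already known, and the part that is actually open is left unresolved.
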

If this  conjecture is true, it would follow as a corollary of Theorem~\ref{thm:main}
 that the descent polynomial for the signed permutations of the multiset
$\{1,1,2,2, \ldots, n,n\}$ has all real roots.

It was shown in
\cite[Theorem 13]{SS} that
\[
\sum_{t \geq 0}\left ( (t+1)(2t+1)\right  )^{ n} x^t \ = \
\frac{\sum_{\e \in \I_{2n}^{(1,4,3,8,5,12, \ldots, 2n-1,4n)}} x^{\asc \e}}
{(1-x)^{2n+1}}.
\]
It may be possible to show that $\sum_{\pi \in P^{\pm}(\{1,1,2,2, \ldots, n,n\})} x^{\des \pi}$
satisfies the same identity. Finally, it would be interesting to investigate $q$-analogs of the above
identities and possibly the conjecture.

\section{Geometric consequences}
\label{sec:geometry}
In this section, we describe some geometric consequences of Theorem~\ref{thm:main}.

\subsection{The $h$-polynomials of finite Coxeter complexes and reduced Steinberg tori}
There is a natural simplicial complex associated with a finite Coxeter group $W$ and 
its reflection representation.  The Coxeter complex of $W$ is the simplicial complex 
$\Sigma = \Sigma(W)$ formed as the intersection of a unit sphere and the reflecting 
hyperplanes of $W$.

The $f$-polynomial of a $(d-1)$-dimensional simplicial complex
$\Delta$ is the generating function for the dimensions
of the faces of the complex:
\[f(\Delta,x) = \sum_{F \in \Delta} x^{\dim F + 1}. \]

The $h$-polynomial of $\Delta$ is a transformation of the
$f$-polynomial: 
\[h(\Delta,x) = (1-x)^df\left(\Delta,\frac{x}{1-x}\right). \]

In fact, the $h$-polynomial of a Coxeter complex $\Sigma(W)$
is the Eulerian polynomial of type $W$ discussed in 
Subsection~\ref{subsec:coxeter}:
\[h(\Sigma(W),x) = \sum_{\sigma\in W} x^{\des_W(\sigma)}. \]

In \cite{DPS09}, Dilks, Petersen, and Stembridge defined
a Boolean cell complex, called the \emph{reduced Steinberg
torus} whose $h$-polynomial is the affine Eulerian polynomial of type $W$ 
discussed in Subsection~\ref{subsec:affine}.
 
A curious property that implies unimodality and is implied by
real-rootedness (under certain conditions) is called \emph{$\gamma$-nonnegativity}. Every polynomial $h(x)$ of degree $n$ 
that is palindromic, i.e., satisfies $h(x) = x^nh(1/x)$, can be written uniquely in the form
\[h(x) = \sum_{i=0}^{\lfloor n/2\rfloor} \gamma_i x^i(1+x)^{n-2i}.\]
If the coefficients $\gamma_i$ are nonnegative for $0\le i \le n/2$, then we say that $h(x)$
is $\gamma$-nonnegative. For a (palindromic) polynomial with only positive coefficients real-rootedness implies
$\gamma$-nonnegativity (see \cite[Lemma~4.1]{Br04}, \cite[Remark~3.1.1]{Gal05}).

Since, all the Eulerian and the affine Eulerian polynomials of type $W$ are palindromic with positive
coefficients, the real-rootedness of all but one of these polynomials (recall that the case of the affine 
$D$-Eulerian polynomial which is still open) implies their $\gamma$-nonnegativity. 
The $\gamma$-nonnegativity was established for various types combinatorially: for type $B$ 
in \cite[4.15]{Petersen}, for type $D$ in 
\cite[Theorem~6.9]{Chow},  (see also \cite[Theorem~1.2]{Stembridge}); and for the affine Eulerian
polynomials in \cite[Theorem~4.2]{DPS09}.

\subsection{The $\h^*$-polynomials of $\s$-lecture hall polytopes}

For background, the {\em Ehrhart series} of a polytope $\PP$ in $\reals^n$ is the series
\[
\sum_{t \geq 0}  |t\PP \cap \integers^n| x^t,
\]
where
$t\PP$ is the $t$-fold {\em dilation} of $\PP$:
\[
t\PP = \left\{(t\la_1, t \la_2, \ldots, t \la_n) \mid (\la_1, \la_2, \ldots, \la_n) \in \PP\right\}.
\]
So, $i(\PP,t) := |t\PP \cap \integers^n|$ is the number of points in $t\PP$, all of whose coordinates
are integer.

For the rest of this discussion assume that the polytope $\PP$ is integral, that is, all of its vertices have integer coordinates.  Then $i(\PP,t)$ 
is a {\em polynomial} in $t$
and the Ehrhart series of $\PP$ has the form
\[
\sum_{t \geq 0} i(\PP,t) x^t = \frac{\h(x)}{(1-x)^{n+1}},
\]
for a polynomial $\h(x) = h_0 + h_1x + \cdots h_dx^d$,  where $h_d \not = 0$ and $d \leq n$.  The polynomial $\h(x)$ is known as the {\em $\h^*$-polynomial} of $\PP$ \cite{ehrhart1,ehrhart2}.  

By Stanley's {\em Nonnegativity Theorem} 
 \cite[Theorem~2.1]{nonnegthm} 
 the coefficients of its $\h^*$-polynomial are
nonnegative.
The sequence of coefficients $h_1,h_2, \ldots, h_d$ of $\h(x)$
is called the {\em $\h^*$-vector} of $\PP$. (Alternative names also appear in the literature, such as Ehrhart $h$-vector, $\delta$-vector.)

\begin{example} Consider the triangle in the plane with vertices $(0,0)$, $(1,2)$, $(2,1)$, formally, let
\begin{equation}
\PP = \{(\la_1, \la_2) \in \reals^2 \mid \la_1 \leq 2 \la_2, \ \la_2 \leq 2 \la_1, \ {\rm and} \ \la_1 + \la_2 \leq 3\}.
\label{polytope_ex}
\end{equation}
Then
\[
i(\PP,t) \  = | t\PP \cap \integers^2 | \ =  \  1 + 3/2 t + 3/2 t^2,
\]
and the Ehrhart series of $\PP$ is
\begin{equation}
\sum_{t \geq 0}(1 + 3/2 t + 3/2 t^2) \, x^t \  = \  \frac{x^2+x+1}{(1-x)^3}.
\label{polytope_es}
\end{equation}

So, the $\h^*$-polynomial of the  polytope $\PP$ is $\h(x)=x^2+x+1$  and its $\h^*$-vector is
$[1,1,1]$, which is nonnegative, symmetric, and unimodal.
\end{example}

The $\h^*$-vector of a convex polytope with integer vertices need not be
symmetric or unimodal.
Although there has been much progress in the direction of
characterizing those
polytopes whose $\h^*$-vector is unimodal (see, e.g., \cite{Athanasiadis,BrunsRomer,MustataPayne,Stanley1980}) this is still
an open question.

However, we can  use
Theorem~\ref{thm:main} to answer the question for the following class of
polytopes associated with $\s$-inversion sequences.

The {\em $\s$-lecture hall polytope} $\PP_n^{(\s)}$ is defined by
\[
\PP_n^{(\s)} = \left\{ (\la_1, \la_2, \dotsc, \la_n) \in \reals^n \ \Big| \
0 \leq \frac{\la_{1}}{s_{1}}
\leq \frac{\la_{2}}{s_{2}} \leq \cdots
\leq \frac{\la_{n}}{s_{n}} \leq 1 \right\},
\]
where $\s$ is an arbitrary sequence of positive integers.

The following is a special case of Theorem 5 in \cite{SS}.
\begin{lem}  For any sequence $\s$ of positive integers,
\[
\sum_{t \geq 0} i(\PP_n^{(\s)},t) x^t =
\frac{\E_n^{(\s)}(x)}{(1-x)^{n+1}}.
\]
\label{SSlem}
\end{lem}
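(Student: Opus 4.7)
The plan is to count lattice points of $t\PP_n^{(\s)}$ by grouping them via their residues modulo $\s$, and then assemble the generating function. For $\lambda \in t\PP_n^{(\s)} \cap \integers^n$, uniquely write $\lambda_i = s_i q_i + e_i$ with $0 \le e_i < s_i$, producing $\e = (e_1, \dots, e_n) \in \I_n^{(\s)}$ and a quotient $\mathbf{q} \in \integers^n$. Since $\lambda_i/s_i = q_i + e_i/s_i$ with $e_i/s_i \in [0,1)$, comparing $\lambda_i/s_i \le \lambda_{i+1}/s_{i+1}$ in the three subcases (strict ascent, equality, or strict descent of the fractional parts) translates the polytope's inequalities into
\[ q_1 \ge 0, \quad q_{i+1} \ge q_i + \chi(e_i/s_i > e_{i+1}/s_{i+1}) \ \ (1 \le i < n), \quad q_n \le t - \chi(e_n > 0). \]

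Next, the substitution $p_i := q_i - \sum_{j<i}\chi(e_j/s_j > e_{j+1}/s_{j+1})$ collapses these conditions into a single chain $0 \le p_1 \le \cdots \le p_n \le t - D(\e)$, where
\[ D(\e) := |\{i \in \{1, \dots, n-1\}: e_i/s_i > e_{i+1}/s_{i+1}\}| + \chi(e_n > 0). \]
There are $\binom{n + t - D(\e)}{n}$ such integer tuples, so summing over $\e$ and applying $\sum_{t \ge 0} \binom{n + t - k}{n} x^t = x^k/(1-x)^{n+1}$ gives
\[ \sum_{t \ge 0} i(\PP_n^{(\s)}, t)\,x^t \;=\; \frac{1}{(1-x)^{n+1}}\sum_{\e \in \I_n^{(\s)}} x^{D(\e)}. \]

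The remaining (and hardest) task is to establish the equidistribution $\sum_\e x^{D(\e)} = \E_n^{(\s)}(x)$. For this I propose the involution $\psi$ on $\I_n^{(\s)}$ that fixes zero entries and complements positive ones: $\psi(\e)_i := s_i - e_i$ when $e_i > 0$ and $\psi(\e)_i := 0$ when $e_i = 0$. A four-case analysis on each adjacent pair $(e_i, e_{i+1})$ shows that $\chi(\psi(\e)_i/s_i < \psi(\e)_{i+1}/s_{i+1})$ coincides with $\chi(e_i/s_i > e_{i+1}/s_{i+1})$ in the ``both zero'' and ``both positive'' subcases but differs in the two mixed subcases. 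The mixed-case imbalance is neutralized by the boundary terms $\chi(\psi(\e)_1 > 0) = \chi(e_1 > 0)$ and $\chi(e_n > 0)$: regarding the binary pattern $(\chi(e_i > 0))_{i=1}^n$ padded on both sides by zero as a closed walk, its number of $0 \to 1$ transitions equals its number of $1 \to 0$ transitions, and this is precisely the identity needed to conclude $\asc(\psi(\e)) = D(\e)$ pointwise.

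The main obstacle will be this last step---carefully executing the case analysis and invoking the closed-walk identity so that the mixed-case and boundary contributions cancel out. The first two steps amount to routine lattice-point bookkeeping once the residue decomposition is set up.
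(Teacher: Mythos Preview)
Your argument is correct. The paper does not actually prove this lemma---it simply records it as a special case of Theorem~5 in \cite{SS}---so you have supplied a self-contained proof where the paper is content to cite.

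The residue decomposition $\lambda_i = s_i q_i + e_i$ and the resulting count $\binom{n+t-D(\e)}{n}$ of quotient chains is the standard lecture-hall computation and is essentially what \cite{SS} does. Where your write-up adds something is the equidistribution step $\sum_{\e} x^{D(\e)} = \sum_{\e} x^{\asc \e}$. Your complementation involution $\psi$ together with the $0$--$1$ walk argument on the support pattern works exactly as you describe: the two ``mixed'' subcases contribute the $0\!\to\!1$ and $1\!\to\!0$ transitions of the padded sequence $0,\chi(e_1>0),\dots,\chi(e_n>0),0$, and since this sequence starts and ends at $0$ these counts agree, yielding $\asc(\psi(\e)) = D(\e)$ pointwise. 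Note that a naive reversal $\e \mapsto (e_n,\dots,e_1)$ would \emph{not} work here because the bounds $s_i$ vary with position, so the complementation map really is the right tool.
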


So combining Lemma~\ref{SSlem} with Theorem~\ref{thm:main} we have:
\begin{cor} For any sequence $\s$ of positive integers,
the $\h^*$-polynomial of the $\s$-lecture hall polytope has all roots real.
\end{cor}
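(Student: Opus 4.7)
The plan is to assemble the corollary directly from the two ingredients stated just above it. By the definition recalled earlier in the section, the $\h^*$-polynomial of an integral polytope $\PP$ in $\reals^n$ is the numerator that appears when the Ehrhart series of $\PP$ is written with denominator $(1-x)^{n+1}$. Thus the first step is simply to invoke Lemma~\ref{SSlem}, which identifies this numerator for $\PP = \PP_n^{(\s)}$ explicitly as the $\s$-Eulerian polynomial $\E_n^{(\s)}(x)$; in other words, the $\h^*$-polynomial of $\PP_n^{(\s)}$ \emph{is} $\E_n^{(\s)}(x)$.

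Given this identification, the conclusion is immediate from Theorem~\ref{thm:main}, which asserts that $\E_n^{(\s)}(x)$ has only real roots for every sequence $\s$ of positive integers. There is no genuine obstacle: one needs only to note that Lemma~\ref{SSlem} presents the Ehrhart series already in the canonical form with denominator $(1-x)^{n+1}$ and polynomial numerator, so the $\h^*$-formalism applies without any further justification regarding integrality of vertices. The proof therefore reduces to a one-line citation of Lemma~\ref{SSlem} followed by Theorem~\ref{thm:main}.
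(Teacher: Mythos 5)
Your proof is correct and matches the paper's own argument: the paper likewise obtains the corollary immediately by combining Lemma~\ref{SSlem} (identifying the $\h^*$-polynomial of $\PP_n^{(\s)}$ with $\E_n^{(\s)}(x)$) with Theorem~\ref{thm:main}. Nothing more is needed.
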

The $\s$-lecture hall polytopes are special in this regard, even among lattice simplices.
The polytope $\PP$ of the example (\ref{polytope_ex})  is a simplex in $\reals^2$ with
integer vertices, but its $\h^*$-polynomial, $x^2 + x + 1$, does not have real roots.

The sequence of coefficients of a real polynomial with only real roots is log-concave, and---if the coefficients are nonnegative---it is also unimodal. This is an easy corollary of a classic result, often referred to as Newton's inequality. 
Thus, we have the following.
\begin{cor} For any sequence $\s$ of positive integers,
The $\h^*$-vector of the  $\s$-lecture hall polytope
is unimodal and log-concave.
\end{cor}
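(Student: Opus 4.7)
The plan is to derive both properties as immediate consequences of the real-rootedness of $\E_n^{(\s)}(x)$ established in the previous corollary, combined with the nonnegativity of its coefficients. Nonnegativity is immediate from the definition: the coefficient of $x^k$ counts the inversion sequences $\e \in \I_n^{(\s)}$ with $\asc \e = k$ (alternatively, it is a special case of Stanley's Nonnegativity Theorem, mentioned earlier). Together with real-rootedness, this forces every root of $\E_n^{(\s)}(x)$ to be a nonpositive real number.

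Next, I would invoke Newton's inequalities: for any polynomial $\sum_{i=0}^d h_i x^i$ of degree $d$ with only real roots, one has
\[
h_i^2 \;\ge\; h_{i-1}\,h_{i+1}\cdot \frac{(i+1)(d-i+1)}{i(d-i)}, \qquad 1 \le i \le d-1.
\]
Since the factor on the right is at least $1$, this specializes to $h_i^2 \ge h_{i-1}h_{i+1}$, which is precisely log-concavity of the $\h^*$-vector. This handles the log-concavity claim.

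For unimodality I would use the standard fact that a nonnegative log-concave sequence with no internal zeros is automatically unimodal. In our setting, internal zeros cannot occur: if $h_i = 0$ with $h_{i-1}, h_{i+1} > 0$, then $0 = h_i^2 \ge h_{i-1}h_{i+1} > 0$ would be a contradiction. Combined with $h_0 = 1$ (the constant term of any lattice $\h^*$-polynomial) and $h_d > 0$ by the choice of $d$, this yields a strictly positive log-concave sequence on $\{0, 1, \dotsc, d\}$, which is necessarily unimodal. The statement is essentially a textbook corollary of real-rootedness, so there is no substantive obstacle once the previous corollary is available; the only care needed is to explicitly rule out internal zeros so that log-concavity actually upgrades to unimodality.
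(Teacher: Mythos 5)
Your proof is correct and follows essentially the same route as the paper, which also derives both claims directly from Newton's inequality applied to the real-rooted polynomial $\E_n^{(\s)}(x)$ with nonnegative coefficients. The only difference is that you flesh out the details the paper leaves implicit (the explicit form of Newton's inequality and the exclusion of internal zeros), but this is the same argument.
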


\begin{rem}
The $\h^*$-vector an  $\s$-lecture hall polytope need not be {\em symmetric}. For example,
\[
\E_n^{(1,3,5)}(x) \ = \ 1 + 10x + 4x^2.
\]
\end{rem}

\section{$(p,q)$-analogs of $\s$-Eulerian polynomials}
\label{sec:q-analogs}
In this section, we define $(p,q)$-analogs of the $\s$-Eulerian polynomials and
show that they have real rools for every positive $p,q\in \reals$. 

In addition to the statistics 
$\A \e$,  $\asc \e$ and $|\e| = \sum_i e_i$ on 
$\s$-inversion sequences, 
we define  a new statistic, related to the major index on permutations.
For $\e \in \I_{n}^{(\s)}$, let
\begin{align*}
\amaj \e = & \sum_{j \in \A \e}(n-j).
\end{align*}
 For a sequence of positive integers $\s$, and a positive integer $n$, 
define a $(p,q)$-analog of the $\s$-Eulerian polynomials as
\begin{equation}
\E^{(\s)}_{n}(x,p,q) = \sum_{\e \in \I_{n}^{(\s)}}
x^{\asc \e}q^{\amaj \e} p^{|\e|}\,,
\label{eq:pq-analog}
\end{equation}
and for $0 \leq i < s_{n}$, define its refinement as before
\[
 \poly^{(\s)}_{n,i}(x,p,q) =  
 \sum_{ \e \in \I_{n}^{(\s)}} \chi(e_n = i)\,
x^{\asc \e}q^{\amaj \e} p^{|\e|}.
\]
\begin{lem}
For $n \geq 1$ and $0 \leq i < s_{n+1}$,
\[
\poly^{(\s)}_{n+1,i}(x,p,q) = p^i \left ( \sum_{j=0}^{\ell-1}
 xq \poly^{(\s)}_{n,j}(xq,p,q)  +
\sum_{j=\ell}^{s_{n}-1}
\poly^{(\s)}_{n,j}(xq,p,q) \right )\,
\]
where $\ell = \lceil i s_{n}/s_{n+1} \rceil$, and
with initial conditions $\poly^{(\s)}_{1,0}(x,p,q)=1$ and $\poly^{(\s)}_{1,i}(x,p,q)=xqp^i$ for $i > 0$.

\label{Pqzrec}
\end{lem}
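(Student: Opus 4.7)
The plan is to mimic the proof of the earlier recurrence for $\poly^{(\s)}_{n,i}(x)$, but with careful attention to how the additional statistics $\amaj$ and $|\cdot|$ transform when passing from a length-$n$ inversion sequence $\e' = (e_1, \dotsc, e_n) \in \I_n^{(\s)}$ to $\e = (\e', i) \in \I_{n+1}^{(\s)}$. First I record three elementary observations: (i) $|\e| = |\e'| + i$, which will produce the global factor $p^i$; (ii) exactly as in the earlier argument, $n \in \A \e$ if and only if $e_n < \ell$ where $\ell = \lceil i s_n / s_{n+1}\rceil$; (iii) every $j \in \A \e'$ remains an ascent of $\e$, so $\A \e = \A \e' \cup \{n \mid e_n < \ell\}$.

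The main step is to track the effect on $\amaj$. Because the length of the sequence has grown by one, each old ascent $j \in \A\e'$ now contributes $(n+1-j)$ to $\amaj \e$ rather than $(n-j)$ to $\amaj \e'$, while a possible new ascent at position $n$ contributes $n+1-n = 1$. This gives
$$\amaj \e = \amaj \e' + \asc \e' + \chi(e_n < \ell),$$
and combining with $\asc \e = \asc \e' + \chi(e_n < \ell)$, the weight of $\e$ factors as
$$x^{\asc \e}\, q^{\amaj \e}\, p^{|\e|} \;=\; p^i \cdot (xq)^{\chi(e_n < \ell)} \cdot (xq)^{\asc \e'}\, q^{\amaj \e'}\, p^{|\e'|}.$$
The elegant --- but forced --- observation is that the stray $q^{\asc \e'}$ arising from the shift $(n-j)\mapsto(n+1-j)$ fuses with $x^{\asc \e'}$ to form $(xq)^{\asc \e'}$, which is precisely why the smaller polynomial must appear evaluated at $(xq,p,q)$ rather than at $(x,p,q)$.

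Summing over $\e' \in \I_n^{(\s)}$ with $e_{n+1} = i$ fixed and partitioning by the value of $e_n = j$, the indicator $\chi(e_n < \ell)$ inserts an extra factor $xq$ for $0\le j < \ell$ and contributes nothing for $\ell \le j < s_n$; this assembles into the stated recurrence. The initial conditions follow by direct inspection of the unique sequence $(i) \in \I_1^{(\s)}$, which has $|\e|=i$ and $\A \e = \{0\}$ precisely when $i > 0$, so $\asc \e = \amaj \e = \chi(i>0)$. The only real obstacle is the $\amaj$ bookkeeping: one must not forget the extra $\asc \e'$ term produced by the length parameter ticking up from $n$ to $n+1$, and it is exactly this term that dictates the substitution $x \mapsto xq$ distinguishing this recurrence from the $(p,q)=(1,1)$ case.
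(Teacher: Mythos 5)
Your proof is correct and follows essentially the same route as the paper: you track how $|\cdot|$, $\asc$, and $\amaj$ change when a final entry $e_{n+1}=i$ is appended, arriving at exactly the same three update formulas the paper records (in particular $\amaj\e = \amaj\e' + \asc\e' + \chi(e_n<\ell)$), and then sum over $e_n=j$. The only difference is that you spell out explicitly how the stray $q^{\asc\e'}$ term fuses with $x^{\asc\e'}$ to force the evaluation at $xq$, which the paper leaves implicit; this is a nice expository addition but not a different argument.
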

\begin{proof}
For $\e=(e_1, \dotsc, e_{n}, i) \in  \I_{n+1}^{(\s)}$
we have that $n \in \A \e$ if and only if
$e_{n}/s_{n} < i/s_{n+1}$, that is, if
$0 \leq e_{n} \leq \ell-1$. Thus, the statistics change accordingly:
\begin{align*}
|(e_1, \dotsc, e_{n}, i)| &= |(e_1, \dotsc, e_{n})| + i\,, \\
\asc (e_1, \dotsc, e_{n}, i) &=\asc (e_1, \dotsc, e_{n}) + \chi(e_{n} \leq \ell-1)\,,\\ 
\amaj(e_1, \dotsc, e_{n}, i) &= \amaj(e_1, \dotsc, e_{n}) + \asc(e_1, \dotsc, e_{n}) + \chi(e_{n} \leq \ell-1)\,.
\end{align*}
For the initial conditions,
$0 \in \A \e$ if and only if $e_1 > 0$, in which case
$\asc \e = 1$,  $\amaj \e = n-0=1$ and $|\e|=e_1 = i$.

\end{proof}

Since $
\E_n^{(\s)}(x,p,q) = \sum_{i=0}^{s_n-1} \poly^{(\s)}_{n,i}(x,p,q),$
we have the following result.
\begin{thm}
Let $\s = (s_1, s_2, \dotsc)$ be a sequence of positive integers. For any positive integer $n$ and positive real 
numbers $p$ and $q$, the polynomial $\E^{(\s)}_{n}(x,p,q)$, defined in \eqref{eq:pq-analog}, has only real roots.
\label{thm:pq-analog}
\end{thm}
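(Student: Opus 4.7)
The plan is to adapt the inductive argument proving Theorem~\ref{thm:main} to this refined setting. Fix positive reals $p,q$ and view each $Q_{n,i}(x) := \poly^{(\s)}_{n,i}(x,p,q)$ as an element of $\reals[x]$ with nonnegative coefficients. I will prove by induction on $n$ the stronger statement that, for $0 \le i < j < s_n$, both pairs $(Q_{n,i}(x),Q_{n,j}(x))$ and $(xQ_{n,i}(x),Q_{n,j}(x))$ are compatible. Summing over $i$ and applying Lemma~\ref{lem:ChudnovskySeymour} then yields that $\E^{(\s)}_n(x,p,q)=\sum_i Q_{n,i}(x)$ has only real roots.

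For the base case $n=1$, the polynomials are $Q_{1,0}(x)=1$ and $Q_{1,i}(x)=xqp^{i}$ for $0<i<s_{1}$, each of degree at most one with nonnegative coefficients. Every relevant nonzero conic combination, with or without an extra factor of $x$, is a polynomial of degree at most two with either a zero root or a unique real root read off the coefficients, so both compatibility conditions follow by inspection.

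For the inductive step I use the recurrence in Lemma~\ref{Pqzrec}. Modulo the overall positive prefactor $p^{i}$, which affects neither roots nor compatibility, the recurrence has the shape of the recurrence in Theorem~\ref{thm:compatible}, except that inside the recursive polynomials the argument $x$ has been replaced by $xq$ and an extra factor $q$ multiplies $x$ in the first sum. Setting $f_j(x) := Q_{n,j}(xq)$, the substitution $y=xq$ (with $q>0$) preserves real-rootedness and both compatibility conditions, so the inductive hypothesis in the variable $y$ gives precisely hypotheses \ref{f1} and \ref{f2} of Theorem~\ref{thm:compatible} for $f_0,\dotsc,f_{s_{n}-1}$. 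Rerunning the proof of Theorem~\ref{thm:compatible} verbatim, with every occurrence of $xf_{\alpha}(x)$ replaced by $xqf_{\alpha}(x)=x\bigl(qf_{\alpha}(x)\bigr)$, the factor $q$ is absorbed into the nonnegative conic weights and the three-set pairwise compatibility argument carries through unchanged, establishing the analogues of \ref{g1} and \ref{g2} for $Q_{n+1,0}(x),\dotsc,Q_{n+1,s_{n+1}-1}(x)$.

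The main obstacle is essentially bookkeeping: one must verify carefully that the two deviations from the $(p,q)=(1,1)$ recurrence---the substitution $x\mapsto xq$ inside the recursive polynomials and the factor $q$ outside the first sum---are both benign, which is exactly where the hypothesis $p,q>0$ is used. No conceptually new tool is required beyond Theorem~\ref{thm:compatible} and Lemma~\ref{lem:ChudnovskySeymour}; the argument is a parameter-tracking refinement of the proof of Theorem~\ref{thm:main}.
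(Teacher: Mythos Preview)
Your proposal is correct and follows essentially the same approach as the paper. The paper packages the argument into a slightly more general auxiliary statement (Theorem~\ref{thm:qcompatible}, allowing arbitrary positive scaling and substitution constants $b_i$, $c_{n,j}$, $d_{n,i}$) which it then specializes; the key observation there is exactly yours, namely that $f(x)\mapsto cf(qx)$ with $c,q>0$ preserves real-rootedness, sign of the leading coefficient, and the two compatibility conditions, so the proof of Theorem~\ref{thm:compatible} goes through unchanged.
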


\begin{proof} By Lemma~\ref{Pqzrec}, the statement is a special case of the following theorem with
$b_0 = 1$, $b_i = qp^i$ for $0 < i< s_1$, $c_{n,i} = q$, and $d_{n,i} = p^i$ for all $n\ge 1$ and $
0\le i < s_n$.

\end{proof}


\begin{thm}
Given a sequence of positive integers, $\s = \{s_i\}_{i=1}^\infty$, and positive numbers $\{b_i \mid 0\le i < s_1\}$, $\{c_{n,i} \mid n\ge 1,\, 0\le i < s_n\}$ and $\{d_{n,i} \mid n\ge 1,\, 0\le i < s_n\}$ define the polynomials $R^{(\s)}_{n,i}(x)$ as follows.
Let $R^{(\s)}_{1,0}(x) = b_0$, $R^{(\s)}_{1,i}(x) = b_ix$ for $1\le i < s_1$, 
and 
for $n \ge 1$ and $0 \leq i < s_n$, let 
\[
R^{(\s)}_{n+1,i}(x) = d_{n,i} \left ( 
\sum_{j=0}^{\ell-1} c_{n,j}x R^{(\s)}_{n,j}(c_{n,j}x) + \sum_{j=\ell}^{s_{n}-1}  R^{(\s)}_{n,j}(c_{n,j}x) \right ), 
\]
with $\ell = \lceil i s_{n+1}/s_{n} \rceil$.

Then for all $n\ge 1$, $\sum_{i} R^{(\s)}_{n,i}(x)$ is real-rooted.
\label{thm:qcompatible}
\end{thm}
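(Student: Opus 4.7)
The plan is to induct on $n$, using Theorem~\ref{thm:compatible} as the engine in direct analogy to the proof of Theorem~\ref{thm:main}. The base case $n=1$ is a direct check: the polynomials $R^{(\s)}_{1,0}(x)=b_0$ and $R^{(\s)}_{1,j}(x)=b_jx$ for $1\le j<s_1$ have only nonpositive real roots, and any pair taken together with their $x$-multiples gives conic combinations of degree at most two that are trivially real-rooted, exactly as in the base case of the proof of Theorem~\ref{thm:main}.

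For the inductive step I would take the inductive hypothesis that the \emph{rescaled} polynomials $f_j(x) := R^{(\s)}_{n,j}(c_{n,j}x)$ satisfy both conditions \emph{(a)} and \emph{(b)} of Theorem~\ref{thm:compatible}, that is, $\{f_j\}_j$ is a pairwise compatible family and $xf_i$ is compatible with $f_j$ whenever $i<j$. Under this hypothesis the recurrence rewrites as
\[R^{(\s)}_{n+1,i}(x) \;=\; d_{n,i}\Bigl(\sum_{j=0}^{\ell-1}(c_{n,j}x)\,f_j(x) \;+\; \sum_{j=\ell}^{s_n-1}f_j(x)\Bigr),\]
which is of precisely the shape treated by Theorem~\ref{thm:compatible}; the positive weights $c_{n,j}$ and $d_{n,i}$ merely reweight the conic combinations in the proof of that theorem and do not affect the pairwise-compatibility bookkeeping. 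The conclusion of Theorem~\ref{thm:compatible} then yields that $\{R^{(\s)}_{n+1,i}(x)\}_i$ is itself pairwise compatible and that $xR^{(\s)}_{n+1,i}$ is compatible with $R^{(\s)}_{n+1,j}$ for $i<j$, so Lemma~\ref{lem:ChudnovskySeymour} delivers real-rootedness of $\sum_i R^{(\s)}_{n+1,i}(x)$.

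The main obstacle is closing the induction, namely upgrading the just-established compatibility of the $R^{(\s)}_{n+1,i}(x)$ back to compatibility of the new rescaled family $\{R^{(\s)}_{n+1,j}(c_{n+1,j}x)\}_j$, which is what the next iteration requires as input. A uniform substitution $x\mapsto cx$ with a single positive $c$ preserves both forms of compatibility, so this step is automatic whenever $c_{n,j}=c_n$ is independent of $j$; this is precisely the situation in every application in Section~\ref{sec:q-analogs} (and in particular in the intended application to the $(p,q)$-analog via Lemma~\ref{Pqzrec}, where $c_{n,i}=q$). In the fully general setting allowed by the statement, simultaneously rescaling a compatible family by distinct positive constants can in principle destroy compatibility, so to handle position-dependent $c_{n,j}$ one would need to exploit additional structure of the $R^{(\s)}_{n,j}$---for instance, a real-stability property of a suitable multivariate lift in which the scaling parameters $c_{n,j}$ appear as auxiliary variables, so that specialization to positive reals preserves stability (and hence compatibility) automatically.
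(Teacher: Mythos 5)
Your approach is essentially the one the paper uses (the paper's proof is a terse two sentences citing Theorem~\ref{thm:compatible} plus the remark that $f(x)\mapsto cf(qx)$ preserves real-rootedness), but you have put your finger on a genuine gap that the paper glosses over, and the situation is in fact worse than you suggest: for position-dependent $c_{n,j}$ the theorem as stated is \emph{false}. Take $s_n=2$ for all $n$, $b_0=b_1=1$, all $d_{n,i}=1$, and $c_{1,\cdot}=(1,1)$, $c_{2,\cdot}=(100,1)$, $c_{3,\cdot}=(1/100,1)$. Then $R_{2,0}=1+x$, $R_{2,1}=2x$; next $R_{3,0}=1+102x$ and $R_{3,1}=10000x^2+102x$, which do interlace (roots $-1/102$ and $\{0,-51/5000\}$, and $-51/5000\le -1/102\le 0$). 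But after the nonuniform rescaling by $c_{3,\cdot}$, the polynomial $R_{3,0}(x/100)=1+(51/50)x$ has its root near $-0.98$, far to the left of both roots of $R_{3,1}(x)$, so interlacing is destroyed, and indeed
\[
R_{4,0}(x)=R_{3,0}(x/100)+R_{3,1}(x)=10000x^2+\tfrac{10302}{100}x+1
\]
has negative discriminant; neither $R_{4,0}$ nor $R_{4,0}+R_{4,1}$ is real-rooted. So the hypotheses of Theorem~\ref{thm:compatible} are simply not inherited by the rescaled family, exactly as you feared.

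The theorem should therefore be read with $c_{n,j}=c_n$ independent of $j$, which is precisely what every application in the paper uses: $c_{n,j}=q$ in Lemma~\ref{Pqzrec} and $c_{n,j}=q^k$ in Lemma~\ref{Frec}. Under that restriction your argument is complete and correct: the substitution $x\mapsto c_nx$ is applied uniformly across the whole family, so both compatibility and $x$-compatibility are preserved; the positive scalars $d_{n,i}$ and the extra factor $c_{n,j}=c_n$ appearing in the first sum are harmless rescalings of a conic combination; and Theorem~\ref{thm:compatible} applies verbatim. Your speculative remedy for the fully general $c_{n,j}$ via a multivariate stability lift cannot succeed, since the example above shows the conclusion itself can fail---no proof technique will rescue a false statement. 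One minor additional point: the theorem's displayed formula for $\ell$ has $s_{n+1}$ and $s_n$ transposed relative to Lemma~\ref{Pqzrec}; the intended value is $\ell=\lceil i s_n/s_{n+1}\rceil$, which is what you (implicitly) and the applications use.
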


\begin{proof} We apply the method of Section~\ref{sec:main}. 
The polynomials 
$\{R^{(\s)}_{n,i}(x)\}_{ 0 \leq i < s_n }$ are compatible by
Theorem~\ref{thm:compatible}. Note that the transformation $f(x) \mapsto
c f(qx)$ preserves real-rootedness and the sign of the leading coefficient
for $c,q > 0.$

\end{proof}

\subsection{The MacMahon--Carlitz $q$-analog for $\Sn$}

We apply Theorem \ref{thm:pq-analog} now to the case of permutations. In particular, we
show for the first time that the MacMahon--Carlitz $q$-Eulerian polynomial
is real-rooted for $q>0$, a result conjectured by Chow and Gessel in \cite{ChowGessel}.

For $\pi \in \Sn$, let
$\maj \pi = \sum_{j \in \D \pi} j$, and $\comaj \pi = \sum_{j \in \D \pi} (n-j).$
It was conjectured in \cite{ChowGessel}, that for any $q > 0$, the polynomial 
\[
A^{\maj}_n(x,q) = \sum_{\pi \in \Sn} x^{\des \pi}q^{\maj \pi},
\]
has all roots real.

We now  settle this conjecture as part of the  following theorem.
\begin{thm}
All of the following $q$-analogs of the Eulerian polynomials 
$A_n(x)= \sum_{\pi \in \Sn} x^{\des \pi}$,  are real-rooted whenever $q>0$:
\begin{align}
A^{\inv}_n(x,q)=& \sum_{\pi \in \Sn} x^{\des \pi} q^{\inv \pi}, \label{qE1}\\
A^{\comaj}_n(x,q)=& \sum_{\pi \in \Sn} x^{\des \pi} q^{\comaj \pi}, \label{qE2}\\
A^{\maj}_n(x,q)=& \sum_{\pi \in \Sn} x^{\des \pi} q^{\maj \pi} \label{qE3}.
\end{align}
The last of these is known as the MacMahon--Carlitz $q$-Eulerian polynomial.
\end{thm}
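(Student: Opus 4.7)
The plan is to realize each of the three $q$-Eulerian polynomials as a specialization, or a positive rescaling of a specialization, of the $(p,q)$-analog $\E^{(\s)}_n(x,p,q)$ defined in \eqref{eq:pq-analog}, so that Theorem~\ref{thm:pq-analog} applies directly.

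First I would set $\s=(1,2,\dotsc,n)$ and invoke the bijection $\phi:\Sn\to \I_n^{(1,2,\dotsc,n)}$ of Lemma~\ref{Desinv}. Since $\phi$ sends the descent set of $\pi$ onto the ascent set of $\e=\phi(\pi)$ and satisfies $\inv\pi=|\e|$, one gets $\des\pi=\asc\e$ and, directly from the definition of $\amaj$,
\[
\amaj\e \;=\; \sum_{j\in \A\e}(n-j) \;=\; \sum_{j\in \D\pi}(n-j) \;=\; \comaj\pi.
\]
Consequently
\[
A_n^{\inv}(x,q)=\E_n^{(1,2,\dotsc,n)}(x,q,1) \qquad\text{and}\qquad A_n^{\comaj}(x,q)=\E_n^{(1,2,\dotsc,n)}(x,1,q),
\]
and Theorem~\ref{thm:pq-analog} immediately yields real-rootedness of both $A_n^{\inv}(x,q)$ and $A_n^{\comaj}(x,q)$ for every real $q>0$.

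For the MacMahon--Carlitz polynomial $A_n^{\maj}$ I would exploit the elementary identity $\maj\pi+\comaj\pi=n\,\des\pi$ (each $j\in \D\pi$ contributes $j+(n-j)=n$) to rewrite
\[
A_n^{\maj}(x,q) \;=\; \sum_{\pi\in\Sn} x^{\des\pi}\,q^{n\des\pi-\comaj\pi} \;=\; A_n^{\comaj}\!\bigl(xq^{n},\,q^{-1}\bigr).
\]
For $q>0$ we have $q^{-1}>0$, so the previous step already shows $A_n^{\comaj}(y,q^{-1})$ has only real roots in $y$; substituting $y=xq^{n}$ is a positive rescaling of the variable and hence preserves real-rootedness. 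This gives real-rootedness of $A_n^{\maj}(x,q)$ in $x$ for every $q>0$.

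The only step that requires any genuine care is the identification $\amaj\e=\comaj\pi$, which uses that $\phi$ from Lemma~\ref{Desinv} is a \emph{set}-preserving bijection sending $\D\pi$ onto $\A\e$, not merely a cardinality-preserving one. Once this is verified, the argument reduces to a direct specialization of Theorem~\ref{thm:pq-analog} together with the trivial algebraic relation between $\maj$ and $\comaj$.
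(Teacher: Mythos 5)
Your proof is correct. The treatment of $A_n^{\inv}$ and $A_n^{\comaj}$ via the bijection $\phi$ and the specializations $\E_n^{(1,2,\dotsc,n)}(x,q,1)$ and $\E_n^{(1,2,\dotsc,n)}(x,1,q)$ of Theorem~\ref{thm:pq-analog} is exactly what the paper does. The only divergence is in the step for $A_n^{\maj}$: the paper exhibits the explicit involution $\pi\mapsto(n+1-\pi_n,\dotsc,n+1-\pi_1)$, which sends $\D\pi$ to $\{n-j\mid j\in\D\pi\}$, and concludes that $(\des,\maj)$ and $(\des,\comaj)$ are jointly equidistributed on $\Sn$, i.e.\ $A_n^{\maj}(x,q)=A_n^{\comaj}(x,q)$ as polynomials. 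You instead use the algebraic identity $\maj\pi+\comaj\pi=n\,\des\pi$ to write $A_n^{\maj}(x,q)=A_n^{\comaj}(xq^n,q^{-1})$ and then apply the (true and relevant) facts that $q^{-1}>0$ and that the positive rescaling $y\mapsto xq^n$ preserves real-rootedness. Both routes are valid; the paper's gives a sharper conclusion (the polynomials coincide) but is specific to $\Sn$, while your substitution argument is precisely the one the paper is forced to use one subsection later for the wreath products $\w$ (where $\maj$ and $\comaj$ are no longer equidistributed), so yours is in that sense the more robust argument. No gaps.
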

\begin{proof}
The bijection 
$\phi: \Sn \rightarrow \I_{n}^{(1,2, \ldots, n)}$ of Lemma \ref{Desinv}
maps the pair of statistics $(\D, \inv)$ on permutations to the pair 
$(\A, | \ |)$ on inversion sequences,
so we have
\[
\sum_{\e \in \I_{n}^{(1,2, \ldots, n)}} x^{\asc \e} q^{\amaj \e} p^{|\e|} \ 
  =\ 
\sum_{\pi \in \Sn} x^{\des \pi} q^{\comaj \pi} p^{\inv \pi}.
\]
In particular, \[\E_n^{(1,2, \ldots, n)}(x,p,1) = A^{\inv}_n(x,p) \quad 
\mathrm{and}\quad
\E_n^{(1,2, \ldots, n)}(x,1,q) = A^{\comaj}_n(x,q)\]
and by Theorem~\ref{thm:pq-analog} they are real-rooted for $p >0$ and $q > 0$,
respectively.  As for
$A^{\maj}_n(x,q)$,
observe that the mapping $\Sn \rightarrow \Sn$ defined by
\[
\pi = (\pi_1, \pi_2, \ldots, \pi_n) \  \mapsto \ 
\pi' = (n+1-\pi_n, n+1- \pi_{n-1}, \ldots, n+1-\pi_1) \]
satisfies $\D \pi' = \{n-j \mid j \in \D \pi \}.$ 
Thus 
\[
\sum_{\pi \in \Sn} x^{\des \pi} q^{\maj \pi} \ = \
\sum_{\pi' \in \Sn} x^{\des \pi'} q^{\comaj \pi'} \ = \
\sum_{\pi \in \Sn} x^{\des \pi} q^{\comaj \pi},
\]
so $(\des,\maj)$ and $(\des, \comaj)$ have the same joint distribution on $\Sn$, i.e.,
$A^{\maj}_n(x,q) = A^{\comaj}_n(x,q)$.

\end{proof}

\begin{cor}
For $q > 0 $ the coefficients of the polynomials $A^{\maj}_n(x,q)$ defined in (\ref{qE3}) form a
unimodal and log-concave sequence.
\end{cor}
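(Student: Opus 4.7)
The plan is to derive this corollary as a direct consequence of the real-rootedness result for $A^{\maj}_n(x,q)$ that was just established, combined with a classical implication of Newton's inequalities. Since the statement only concerns the coefficient sequence of a single polynomial (with $q$ fixed to a positive real), no further combinatorial machinery is needed.

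First I would observe that, viewed as a polynomial in $x$, each coefficient of $A^{\maj}_n(x,q) = \sum_{\pi\in\Sn} x^{\des\pi} q^{\maj\pi}$ is a sum of terms $q^{\maj\pi}$ over permutations with a fixed number of descents. In particular, each such coefficient is itself a polynomial in $q$ with nonnegative integer coefficients, and hence takes a nonnegative value at every $q > 0$. Moreover, the leading and constant coefficients are strictly positive, so the $x$-coefficient sequence of $A^{\maj}_n(x,q)$ has no internal zeros.

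Next I would invoke the theorem (already proved immediately above) that $A^{\maj}_n(x,q)$ has only real roots for every $q > 0$. By the standard consequence of Newton's inequalities, a real-rooted polynomial with nonnegative coefficients has a log-concave coefficient sequence; together with the absence of internal zeros this implies unimodality. I would cite these as classical facts rather than reprove them.

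There is no real obstacle here; the only thing to check carefully is the nonnegativity and the absence of internal zeros (so that log-concavity genuinely implies unimodality), both of which follow from the combinatorial definition. The entire argument should be a single short paragraph in the final write-up.
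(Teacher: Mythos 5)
Your proposal is correct and matches the paper's (implicit) argument exactly: the paper states this corollary immediately after proving that $A^{\maj}_n(x,q)$ is real-rooted for $q>0$, and earlier in Section~\ref{sec:geometry} it explicitly records the fact that a real-rooted polynomial with nonnegative coefficients has a log-concave, unimodal coefficient sequence as a consequence of Newton's inequality. The extra care you take to note positivity of the constant and leading coefficients (hence no internal zeros, hence log-concavity implies unimodality) is a small but legitimate detail that the paper leaves implicit.
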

Partial results on the unimodality of these coefficients were obtained in \cite[Proposition 9]{HJZ2013}.

\subsection{Some $q$-analogs for $\Bn$ and $\w$}

We can also apply Theorem~\ref{thm:pq-analog} to signed permutations and
wreath products, making use of Lemma~\ref{lem:theta}.
Extend $\maj$ and $\comaj$ to $\w$ by defining them for $\ps \in \w$ as
$\maj \ps = \sum_{j \in \D \ps} j$ and $\comaj \ps = \sum_{j \in \D \ps} (n-j).$

The statistic {\em flag inversion number}  is defined
for $\ps \in \w$ by
\[
\finv \ps \ = \inv \pi + \sum_{i=1}^n i \xi_i.
\]

Recall the bijection $\Theta$ of Lemma~\ref{lem:theta} that maps the descent set on $\w$ to
the ascent set on $\I_n^{(k,2k, \ldots, nk)}$. The following additional property of $\Theta$ 
was shown in \cite[Theorem~3]{PS2}.
\begin{lem}
The bijection $\Theta: \w \ \longrightarrow \ \I_n^{(k,2k, \ldots, nk)}$ of Lemma~\ref{lem:theta} satisfies
\[
\finv \ps \ =  \left | \Theta(\ps) \right |.
\]
\label{thetafinv}
\end{lem}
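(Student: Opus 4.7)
The plan is to verify the identity $\finv \ps = |\Theta(\ps)|$ by direct unfolding of the definitions; no combinatorial bijective argument is needed beyond what is already encoded in the bijection $\phi$ from Lemma~\ref{Desinv}.

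First, I would write out $|\Theta(\ps)|$ using the explicit formula for $\Theta$. By definition,
\[
|\Theta(\ps)| \;=\; \sum_{i=1}^n (i \xi_i + t_i) \;=\; \sum_{i=1}^n i \xi_i \;+\; \sum_{i=1}^n t_i,
\]
where $(t_1, \ldots, t_n) = \phi(\pi)$. The first sum is exactly the ``color contribution'' $\sum_{i=1}^n i \xi_i$ appearing in the definition of $\finv \ps$.

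Second, I would invoke the second conclusion of Lemma~\ref{Desinv}, namely $\inv \pi = |\boldsymbol{t}| = t_1 + \cdots + t_n$, to rewrite the second sum as $\inv \pi$. Combining gives
\[
|\Theta(\ps)| \;=\; \inv \pi + \sum_{i=1}^n i \xi_i \;=\; \finv \ps,
\]
which is the desired identity.

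The only thing to double-check is that the indexing conventions in $\Theta$ (coordinate $i$ equals $i\xi_i + t_i$) and in $\finv$ (color weight $i\xi_i$) are matched with the same running index $i \in \{1,\dots,n\}$, which is immediate from the statements of Lemmas~\ref{Desinv} and~\ref{lem:theta}. There is no genuine obstacle here; the content of the lemma is packaged entirely in Lemma~\ref{Desinv}, and the proof is a one-line bookkeeping verification.
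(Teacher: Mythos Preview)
Your proof is correct: unfolding the definition of $\Theta$ gives $|\Theta(\ps)| = \sum_{i=1}^n(i\xi_i + t_i)$, and Lemma~\ref{Desinv} supplies $\sum_i t_i = \inv \pi$, so the identity with $\finv\ps$ is immediate. The paper does not actually prove this lemma in-text; it simply attributes the result to \cite[Theorem~3]{PS2}, so your direct verification is a welcome self-contained substitute and is exactly the computation one would expect that reference to contain.
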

\begin{thm}
The following $q$-analogs of the descent polynomial $G_{n,k}(x)$ have all roots real for $q > 0$:
\begin{align}
G^{\finv}_{n,k}(x,q) \ = &  \ \sum_{\ps \in \w} x^{\des \ps } q^{\finv \ps}, \label{Wq1}\\
G^{\comaj}_{n,k}(x,q) \ = &  \ \sum_{\ps \in \w} x^{\des \ps } q^{\comaj \ps},\label{Wq2}\\
G^{\maj}_{n,k}(x,q) \ = &  \ \sum_{\ps \in \w} x^{\des \ps } q^{\maj \ps}. \label{Wq3}
\end{align}
\end{thm}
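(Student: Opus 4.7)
The plan is to adapt the strategy used in the previous subsection for $\Sn$: translate the statistics on $\w$ to statistics on $\s$-inversion sequences via the bijection $\Theta$ of Lemma~\ref{lem:theta}, apply Theorem~\ref{thm:pq-analog} with $\s = (k, 2k, \ldots, nk)$ to obtain two of the three statements directly, and then deduce the third from the second via an algebraic substitution based on the identity $\maj + \comaj = n \des$.

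First I would record the statistic correspondences under $\Theta$. From Lemma~\ref{lem:theta}, $\A \Theta(\ps) = \D \ps$, so $\asc \Theta(\ps) = \des \ps$ and, summing $(n-j)$ over the common index set, $\amaj \Theta(\ps) = \comaj \ps$. From Lemma~\ref{thetafinv}, $|\Theta(\ps)| = \finv \ps$. Combining these,
\[
\E_n^{(k,2k,\ldots,nk)}(x, p, q) \;=\; \sum_{\ps \in \w} x^{\des \ps}\, q^{\comaj \ps}\, p^{\finv \ps},
\]
so $G^{\finv}_{n,k}(x, p) = \E_n^{(k,2k,\ldots,nk)}(x, p, 1)$ and $G^{\comaj}_{n,k}(x, q) = \E_n^{(k,2k,\ldots,nk)}(x, 1, q)$. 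Theorem~\ref{thm:pq-analog} then delivers real-rootedness in $x$ of both polynomials for every positive value of the remaining parameter.

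For the third polynomial I would exploit $\D \ps \subseteq \{0, 1, \ldots, n-1\}$, from which
\[
\maj \ps + \comaj \ps \;=\; \sum_{j \in \D \ps}\bigl(j + (n-j)\bigr) \;=\; n \des \ps.
\]
Substituting $\comaj = n\des - \maj$ into the defining sum gives
\[
G^{\comaj}_{n,k}(x, q) \;=\; \sum_{\ps \in \w}(xq^n)^{\des \ps}(q^{-1})^{\maj \ps} \;=\; G^{\maj}_{n,k}(xq^n, q^{-1}),
\]
equivalently $G^{\maj}_{n,k}(x, q) = G^{\comaj}_{n,k}(xq^n, q^{-1})$. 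Fix $q > 0$; then $q^{-1} > 0$, so by the case already handled $G^{\comaj}_{n,k}(y, q^{-1})$ is real-rooted in $y$, and the substitution $y = xq^n$ is a positive rescaling of the indeterminate, transporting its real roots to real roots of $G^{\maj}_{n,k}(x, q)$ in $x$.

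I do not expect a serious obstacle: the combinatorial work is done by Lemmas~\ref{lem:theta} and~\ref{thetafinv}, and the analytic work by Theorem~\ref{thm:pq-analog}. The one subtlety worth flagging is that, unlike in $\Sn$ where a reversal-and-complement involution proves $A^{\maj}_n = A^{\comaj}_n$ as polynomials, one has $G^{\maj}_{n,k} \ne G^{\comaj}_{n,k}$ for $k \ge 2$ (already visible at $n=k=2$), since contributions from the position~$0$ descent behave asymmetrically under the two statistics. Hence the passage from $G^{\comaj}$ to $G^{\maj}$ must be effected by the algebraic change of variables above rather than by an equality of polynomials, and one must check that the map $q \mapsto q^{-1}$ preserves the hypothesis $q > 0$, which it trivially does.
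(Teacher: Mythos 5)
Your proof is correct and follows essentially the same route as the paper: push everything through $\Theta$ to the $(p,q)$-analog $\E_n^{(k,2k,\ldots,nk)}(x,p,q)$, read off $G^{\finv}$ and $G^{\comaj}$ as specializations, then recover $G^{\maj}$ via $\maj + \comaj = n\des$ and the substitution $x \mapsto xq^n$, $q \mapsto 1/q$. (Incidentally, your argument positions for $\E_n^{(k,2k,\ldots,nk)}$ are the consistent ones — the published proof appears to have transposed the second and third arguments relative to the definition of $\E^{(\s)}_n(x,p,q)$ and to the $\Sn$ case, a slip your write-up silently corrects.)
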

\begin{proof}
By Lemma \ref{thetafinv}, 
\[\E_n^{(k,2k, \ldots, nk)}(x,1,p) = G^{\finv}_{n,k}(x,p)\quad
\mathrm{and} \quad 
\E_n^{(k,2k, \ldots, nk)}(x,q,1) = G^{\comaj}_{n,k}(x,q),\] 
both of which are real-rooted by
Theorem~\ref{thm:pq-analog}.
In contrast to the case for $\Sn$, 
when $k>1$, the polynomials (\ref{Wq2}) and (\ref{Wq3}) are not the same.  However,
\[\sum_{\ps \in \w} x^{\des \ps } q^{\maj \ps} \ = \ 
\sum_{\ps \in \w} (xq^n)^{\des \ps } q^{-\comaj \ps}\,.\]
So, $G^{\maj}_{n,k}(x,q) = \E_n^{(k,2k, \ldots, nk)}(xq^n,1,1/q)$ and, by Theorem \ref{thm:pq-analog},
has all roots real.

\end{proof}

\begin{cor}
For $q > 0$ the coefficients of the polynomials $G^{\maj}_n(x,q)$ defined in (\ref{Wq3}) form a
unimodal and log-concave sequence.
\end{cor}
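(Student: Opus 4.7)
The plan is to mirror the argument used for the analogous corollary on $A^{\maj}_n(x,q)$ earlier in this section, invoking the classical theorem (essentially due to Newton) that a polynomial with nonnegative real coefficients and only real roots has a log-concave, and hence unimodal, coefficient sequence.

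First, I would fix $q > 0$ and observe that, as a polynomial in $x$,
\[
G^{\maj}_{n,k}(x,q) = \sum_{\ps \in \w} x^{\des \ps} q^{\maj \ps}
\]
has all nonnegative real coefficients, since each summand is a nonnegative monomial in $x$.

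Next, the preceding theorem yields that $G^{\maj}_{n,k}(x,q)$ has only real roots for $q > 0$. Newton's inequalities then give log-concavity of the coefficient sequence. Because the coefficients are nonnegative, the polynomial factors over $\reals$ as $\prod_i (x + r_i)$ with each $r_i \ge 0$; this forces the coefficient sequence to have no internal zeros, and log-concavity of a nonnegative sequence without internal zeros implies unimodality.

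The argument is entirely routine and presents no substantive obstacle—the corollary is a direct consequence of the preceding real-rootedness theorem combined with a classical fact about real-rooted polynomials with nonnegative coefficients, exactly as was done for the symmetric-group case earlier.
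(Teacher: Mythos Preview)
Your proposal is correct and matches the paper's approach exactly: the paper states this corollary without proof, relying (as it did for the analogous statement about $A^{\maj}_n(x,q)$) on the classical fact, recalled in Section~\ref{sec:geometry}, that a real-rooted polynomial with nonnegative coefficients has a log-concave and unimodal coefficient sequence. Your explicit mention of the absence of internal zeros is a welcome bit of care that the paper leaves implicit.
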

Partial results on the unimodality of these coefficients were obtained in \cite[Proposition 10]{HJZ2013}.

\subsection{Euler--Mahonian $q$-analogs for $\Bn$ and $\w$}
\label{subsec:euler-mahonian}

In the case of $\Bn$ and $\w$, a different  $q$-analog of the
Eulerian polynomial is based on the {\em flag major index} statistic
\cite{AdinRoichman},
$\fmaj$, which is defined 
for $\ps \in \w$ by
\[
\fmaj \ps \ = k \, \comaj \ps -  \sum_{i=1}^n  \xi_i.
\]
This definition differs a bit from those appearing elsewhere because of the appearance of $\comaj$, but it was shown
in \cite{PS2} to be equivalent, e.g., to the definition in  \cite{ChowGessel,ChowMansour}.

In contrast to $\comaj$ and $\maj$ from the previous section,
$\fmaj$ is {\em Mahonian}, i.e., it has the same distribution as
``length'' on $\w$ \cite{AdinRoichman}.  For that reason, the polynomials
\[
G^{\fmaj}_{n,k}(x,q)= \sum_{\ps \in \w} x^{\des \ps}q^{\fmaj \ps}
\]
are referred to as the {\em Euler--Mahonian polynomials} for $\w$.
It was conjectured by Chow and Mansour in
\cite{ChowMansour} that $G^{\fmaj}_{n,k}(x,q)$
has all roots real for $q > 0$.
In the special case, $k=2$, of signed permutations, $\Bn$, the conjecture was
made by Chow and Gessel in 
 \cite{ChowGessel} that for any $q > 0$, the
polynomial $B_n(x,q)$, defined by
\[
B_n(x,q) =  \sum_{\sigma \in \Bn} x^{\des \sigma}q^{\fmaj \sigma},
\]
has all real roots.

We can now settle these conjectures. 
\begin{thm}
For the wreath product groups, $\w$, the Euler--Mahonian polynomials
\[
G^{\fmaj}_{n,k}(x,q)= \sum_{\ps \in \w} x^{\des \ps}q^{\fmaj \ps}
\]
have all real roots for $q > 0$.
\label{wreathreal}
\end{thm}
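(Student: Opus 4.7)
The plan is to translate $\fmaj$ from $\w$ to a statistic on $\s$-inversion sequences (with $\s=(k,2k,\dots,nk)$) via the bijection $\Theta$ of Lemma~\ref{lem:theta}, then refine $G^{\fmaj}_{n,k}(x,q)$ by the last entry, derive a recurrence, and apply the general Theorem~\ref{thm:qcompatible}. The more flexible Theorem~\ref{thm:qcompatible} (rather than Theorem~\ref{thm:pq-analog}) will be needed here because $\fmaj = k\,\comaj - \sum\xi_i$ carries a minus sign that cannot be absorbed into a $p^{|\e|}$ factor.

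First I would unwind the three statistics under $\Theta$. Because $\A\e=\D\ps$ as subsets of $\{0,\dots,n-1\}$, both $\des\ps=\asc\e$ and $\comaj\ps=\amaj\e$ follow immediately. Since $e_i=i\xi_i+t_i$ with $0\le t_i<i$, we have $\xi_i=\lfloor e_i/i\rfloor$, giving
\[\fmaj\ps=k\,\amaj\e-\sum_{i=1}^n\lfloor e_i/i\rfloor.\]
Next I would introduce the refinement
\[R^{(\s)}_{n,i}(x)=\sum_{\e\in\I_n^{(\s)},\,e_n=i}x^{\asc\e}\,q^{k\,\amaj\e-\sum_j\lfloor e_j/j\rfloor},\]
so that $G^{\fmaj}_{n,k}(x,q)=\sum_{i=0}^{kn-1}R^{(\s)}_{n,i}(x)$, and derive the recurrence by appending $e_{n+1}=i$. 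Setting $\chi=\chi(e_n/n<i/(n+1))$, the increments are $\asc\e'=\asc\e+\chi$, $\amaj\e'=\amaj\e+\asc\e+\chi$ (from the reindexing $(n-j)\mapsto(n+1-j)$ and a possible new ascent at $n$), and $\sum_j\lfloor e'_j/j\rfloor=\sum_j\lfloor e_j/j\rfloor+\lfloor i/(n+1)\rfloor$. Packaging these increments absorbs $q^{k\,\asc\e}$ as the substitution $x\mapsto q^kx$, contributes a factor $q^kx$ when $\chi=1$, and leaves an overall prefactor $q^{-\lfloor i/(n+1)\rfloor}$, producing
\[R^{(\s)}_{n+1,i}(x)=q^{-\lfloor i/(n+1)\rfloor}\left(\sum_{j=0}^{\ell-1}(q^kx)\,R^{(\s)}_{n,j}(q^kx)+\sum_{j=\ell}^{kn-1}R^{(\s)}_{n,j}(q^kx)\right),\]
with $\ell=\lceil in/(n+1)\rceil$ and initial data $R^{(\s)}_{1,0}(x)=1$, $R^{(\s)}_{1,i}(x)=q^{k-i}x$ for $0<i<k$.

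This recurrence has exactly the shape demanded by Theorem~\ref{thm:qcompatible} with the (positive, for $q>0$) choices $c_{n,j}=q^k$ and $d_{n,i}=q^{-\lfloor i/(n+1)\rfloor}$, and the initial polynomials have positive leading coefficients. Invoking Theorem~\ref{thm:qcompatible} then concludes that $G^{\fmaj}_{n,k}(x,q)=\sum_i R^{(\s)}_{n,i}(x)$ is real-rooted for every $q>0$. The main obstacle is essentially bookkeeping: correctly tracking how each of $\asc$, $\amaj$, and $\sum\lfloor e_j/j\rfloor$ changes under the extension, and verifying that the $q^{-\lfloor i/(n+1)\rfloor}$ factor lands in the $d_{n,i}$ slot of the template rather than interfering with the factors that multiply $x$. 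Once the bookkeeping is in place, positivity of the data is automatic and the general theorem delivers the conclusion.
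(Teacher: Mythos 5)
Your proposal matches the paper's proof essentially line for line: the same transport of $\fmaj$ to $\Ifmaj = k\,\amaj - \sum_j\lfloor e_j/j\rfloor$ via $\Theta$, the same refinement by the last entry, the same recurrence with the $q^{-\lfloor i/(n+1)\rfloor}$ prefactor and $x\mapsto q^k x$ substitution, and the same invocation of Theorem~\ref{thm:qcompatible} with $c_{n,j}=q^k$, $d_{n,i}=q^{-\lfloor i/(n+1)\rfloor}$. This is correct and is precisely the paper's argument.
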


To prove this theorem, 
we first relate  $\fmaj$ to a  statistic,
$\Ifmaj$, on the inversion sequences 
$\I_n^{(k,2k, \ldots, kn)}$.
For  $\e = (e_1, \dotsc, e_n) \in \I_n^{(k,2k, \ldots, nk)}$, define
\begin{align*}
\Ifmaj \e & \ = \  k \, \amaj \e - \sum_{j=1}^n \floor{\frac{e_j}{j}}.
\end{align*}

It was shown in \cite [Theorem 3]{PS2}
that the
bijection $\Theta: \w \rightarrow \I_n^{(k,2k, \ldots, nk)}$ of Lemma \ref{lem:theta},
which maps the descent set on $\w$ to
the ascent set on $\I_n^{(k,2k, \ldots, nk)}$ also satisfies
\[
\fmaj \ps = \Ifmaj \Theta(\ps).
\]
We thus have
\begin{equation}
G^{\fmaj}_{n,k}(x,q) 
\  = \ 
 \sum_{\e \in \I_n^{(k,2k, \ldots, nk)}} x^{\asc \e}q^{\Ifmaj \e}.  
\label{EMF}
\end{equation}
 For $n,k \geq 1$ and $0 \leq i < nk$, define
\[
G^{\fmaj}_{n,k,i}(x,q) =  \sum_{\e \in \I_n^{(k,2k, \ldots, nk)}}
\chi(e_n = i)\, x^{\asc \e}q^{\Ifmaj \e}.
\]
\begin{lem}
For $n,k \geq 1$ and $0 \leq i < (n+1)k$, 
\[
G^{\fmaj}_{n+1,k,i}(x,q) = q^{-\lfloor i/(n+1) \rfloor} \left ( \sum_{j=0}^{\ell-1}
 xq^k G^{\fmaj}_{n,k,j}(xq^k,q)  +
\sum_{\ell}^{s_{n}-1}
G^{\fmaj}_{n,k,j}(xq^k,q) \right )\,
\]
where $\ell = \lceil i n/(n+1) \rceil$, and
with initial conditions $G^{\fmaj}_{1,k,0}(x,q)=1$ and $G^{\fmaj}_{1,k,i}(x,q)=xq^{k-i}$ for $i > 0$.
\label{Frec}
\end{lem}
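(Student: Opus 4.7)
The plan is to prove the recurrence by direct bookkeeping: fix $i$ with $0 \le i < (n+1)k$, and study how each of the statistics $\asc$, $\amaj$, $\Ifmaj$ of an inversion sequence $\e' = (e_1,\ldots,e_n,i) \in \I_{n+1}^{(k,2k,\ldots,(n+1)k)}$ decomposes in terms of the corresponding statistics on the truncation $\e = (e_1,\ldots,e_n) \in \I_{n}^{(k,2k,\ldots,nk)}$, together with a term involving $i$ alone. Summing over $e_n = j$ and splitting the sum at $j = \ell$ will deliver the claimed recurrence.

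First I would observe, exactly as in the proof of Lemma~\ref{Pqzrec}, that by definition $n \in \A \e'$ if and only if $e_n/(nk) < i/((n+1)k)$, equivalently $e_n \le \ell - 1$ where $\ell = \lceil i n/(n+1)\rceil$. This gives
\begin{equation*}
\asc \e' = \asc \e + \chi(e_n \le \ell - 1).
\end{equation*}
Next I would compute $\amaj \e'$. Every $j \in \A\e \subseteq \{0,\ldots,n-1\}$ contributes $(n+1) - j = (n-j) + 1$ to $\amaj \e'$ instead of $n - j$, and if $n \in \A\e'$ that index contributes an extra $1$. Therefore
\begin{equation*}
\amaj \e' = \amaj \e + \asc \e + \chi(e_n \le \ell - 1).
\end{equation*}
Substituting into $\Ifmaj \e' = k\,\amaj \e' - \sum_{j=1}^{n+1}\lfloor e_j/j\rfloor$ and isolating the new term $\lfloor i/(n+1)\rfloor$ yields
\begin{equation*}
\Ifmaj \e' = \Ifmaj \e + k\,\asc \e + k\,\chi(e_n \le \ell - 1) - \lfloor i/(n+1)\rfloor.
\end{equation*}

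Combining these identities,
\begin{equation*}
x^{\asc \e'} q^{\Ifmaj \e'} = q^{-\lfloor i/(n+1)\rfloor}\,(xq^k)^{\chi(e_n \le \ell-1)} (xq^k)^{\asc \e}\, q^{\Ifmaj \e},
\end{equation*}
so that summing over all $\e'$ with $e_{n+1}=i$ and grouping by the value $j = e_n$ gives
\begin{equation*}
G^{\fmaj}_{n+1,k,i}(x,q) = q^{-\lfloor i/(n+1)\rfloor}\left(\sum_{j=0}^{\ell-1} xq^k G^{\fmaj}_{n,k,j}(xq^k,q) + \sum_{j=\ell}^{nk-1} G^{\fmaj}_{n,k,j}(xq^k,q)\right),
\end{equation*}
which is precisely the stated recurrence (with $s_n = nk$). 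The initial conditions follow by unpacking: for $n=1$ and $e_1 = i$, $\asc = \chi(i>0)$, $\amaj = \chi(i>0)$, and $\Ifmaj = k\chi(i>0) - i$, so $G^{\fmaj}_{1,k,0}(x,q) = 1$ and $G^{\fmaj}_{1,k,i}(x,q) = xq^{k-i}$ for $i>0$.

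The only delicate part is the bookkeeping for $\amaj$: one must be careful that the index weights switch from $n-j$ to $(n+1)-j$ when the length grows, which is what produces the extra $\asc \e$ summand and hence the substitution $x \mapsto xq^k$. Everything else is a mechanical separation of the new floor term $\lfloor i/(n+1)\rfloor$ from the sum of floors.
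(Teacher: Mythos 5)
Your proof is correct and follows essentially the same route as the paper: decompose the statistics when the entry $e_{n+1}=i$ is appended, then sum over $e_n=j$ and split at $j=\ell$. You make explicit the $\amaj$ bookkeeping step (each $j\in\A\e$ now contributes $(n+1)-j$, giving the extra $\asc\e$ term) which the paper's proof compresses into a single line by writing $\Ifmaj\e' = \Ifmaj\e + k\asc\e' - \lfloor i/(n+1)\rfloor$; the two formulations agree after expanding $\asc\e'=\asc\e+\chi(e_n\le\ell-1)$.
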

\begin{proof}
For $\e=(e_1, \dotsc, e_{n},i) \in  \I_{n+1}^{(k,2k, \dotsc, (n+1)k)}$
we have that
$n \in \A \e$ if and only if
$e_n/n < i/(n+1)$, that is
$0 \leq e_n \leq \ell-1$.
Similarly to the proof of Lemma~\ref{Pqzrec}, we see that by appending the value $i$ in the $(n+1)$th position
the statistics change as follows:
\begin{align*}
\asc (e_1, \dotsc, e_{n},i) &= \asc (e_1, \dotsc, e_{n}) + \chi(e_n \leq \ell-1)\,,\\
\Ifmaj (e_1, \dotsc, e_{n},i) & = \Ifmaj (e_1, \dotsc, e_{n}) + k \asc (e_1, \dotsc, e_{n},i) - \lfloor i/(n+1) \rfloor\,.
\end{align*}
For the initial conditions,
$0 \in \A \e$ if and only if $e_1 > 0$, in which case $\des \e = 1$ and $\Ifmaj \e = k \, \amaj \e - \lfloor i/1 \rfloor = k-i$, and
both are zero otherwise.

\end{proof}

\begin{proof}[Proof of Theorem~\ref{wreathreal}]

For fixed positive integer $k$ and real number $q$, setting $s_n = nk$ for all $n\ge 1$, $b_0=1$, $b_i = q^{k-i}$ for $0\le i < k$, $c_{n,j} = q^k$ for $0 \le j < nk$ and $d_{n,i}=q^{-\lfloor i/(n+1) \rfloor}$ for $0 \le i < nk$, 
in Theorem~\ref{thm:qcompatible} gives the
 recurrence of Lemma~\ref{Frec}.
Thus, by Theorem~\ref{thm:qcompatible}, 
\[
G^{\fmaj}_{n,k}(x,q)   
 \ = \
\sum_{i=0}^{nk-1} G^{\fmaj}_{n,k,i}(x,q)
\]
has all real roots.

\end{proof}  

\section*{Acknowledgments}
We thank the National Science Foundation (grant \#1202691 supporting the Triangle 
Lectures in Combinatorics) and the Simons Foundation (grant \#244963) for travel funding 
that faciltated the research presented here. The second author was also supported by the 
Knut and Alice Wallenberg Foundation.

Thanks to Thomas Pensyl for his contributions to Theorem~\ref{bijection}.
Special thanks to Christian Krattenthaler for his comments on a partial result presented
at the S\'eminaire Lotharingien de Combinatoire at Strobl that encouraged us
to develop our methods for the type $D$ case. We also thank Petter Br\"and\'en for helpful suggestions.   

We are grateful to the referee who did a careful reading of the paper and offered many corrections and suggestions to improve the presentation.

\bibliographystyle{plain}
\bibliography{roots}

\begin{thebibliography}{10}

\bibitem{AdinRoichman}
Ron~M. Adin and Yuval Roichman.
\newblock The flag major index and group actions on polynomial rings.
\newblock {\em European J. Combin.}, 22(4):431--446, 2001.

\bibitem{Athanasiadis}
Christos~A. Athanasiadis.
\newblock Ehrhart polynomials, simplicial polytopes, magic squares and a
  conjecture of {S}tanley.
\newblock {\em J. Reine Angew. Math.}, 583:163--174, 2005.

\bibitem{BB05}
Anders Bj{\"o}rner and Francesco Brenti.
\newblock {\em Combinatorics of {C}oxeter groups}, volume 231 of {\em
  {G}raduate {T}exts in Mathematics}.
\newblock Springer, New York, 2005.

\bibitem{Br04}
Petter Br{\"a}nd{\'e}n.
\newblock Sign-graded posets, unimodality of {$W$}-polynomials and the
  {C}harney--{D}avis conjecture.
\newblock {\em Electron. J. Combin.}, 11(2):Research Paper 9, 15 pp.
  (electronic), 2004/06.

\bibitem{MR2218995}
Petter Br{\"a}nd{\'e}n.
\newblock On linear transformations preserving the {P}\'olya frequency
  property.
\newblock {\em Trans. Amer. Math. Soc.}, 358(8):3697--3716 (electronic), 2006.

\bibitem{brenti94}
Francesco Brenti.
\newblock q-{E}ulerian polynomials arising from {C}oxeter groups.
\newblock {\em European J. Comb.}, 15:417--441, September 1994.

\bibitem{brenti2000}
Francesco Brenti.
\newblock A class of {$q$}-symmetric functions arising from plethysm.
\newblock {\em J. Combin. Theory Ser. A}, 91(1-2):137--170, 2000.
\newblock In memory of Gian-Carlo Rota.

\bibitem{BrunsRomer}
Winfried Bruns and Tim R{\"o}mer.
\newblock {$h$}-vectors of {G}orenstein polytopes.
\newblock {\em J. Combin. Theory Ser. A}, 114(1):65--76, 2007.

\bibitem{Chow}
Chak-On Chow.
\newblock On certain combinatorial expansions of the {E}ulerian polynomials.
\newblock {\em Adv. in Appl. Math.}, 41(2):133--157, 2008.

\bibitem{ChowGessel}
Chak-On Chow and Ira~M. Gessel.
\newblock On the descent numbers and major indices for the hyperoctahedral
  group.
\newblock {\em Adv. in Appl. Math.}, 38(3):275--301, 2007.

\bibitem{ChowMansour}
Chak-On Chow and Toufik Mansour.
\newblock A {C}arlitz identity for the wreath product {$C_r \wr S_n$}.
\newblock {\em Adv. in Appl. Math.}, 47(2):199--215, 2011.

\bibitem{ChudnovskySeymour}
Maria Chudnovsky and Paul Seymour.
\newblock The roots of the independence polynomial of a clawfree graph.
\newblock {\em J. Combin. Theory Ser. B}, 97(3):350--357, 2007.

\bibitem{CSS}
Sylvie Corteel, Carla~D. Savage, and Andrew~V. Sills.
\newblock Lecture hall sequences, $q$-series, and asymmetric partition
  identities.
\newblock In Krishnaswami Alladi and Frank Garvan, editors, {\em Partitions,
  $q$-series, and Modular Forms}, volume~23 of {\em Developments in
  Mathematics}, pages 53--68. Springer, 2012.

\bibitem{DiaconisFulman}
Persi Diaconis and Jason Fulman.
\newblock Carries, shuffling and symmetric functions.
\newblock {\em Adv. in Appl. Math.}, 43(2):176--196, 2009.

\bibitem{DPS09}
Kevin Dilks, T.~Kyle Petersen, and John~R. Stembridge.
\newblock Affine descents and the {S}teinberg torus.
\newblock {\em Adv. in Appl. Math.}, 42(4):423--444, 2009.

\bibitem{ehrhart1}
Eugene Ehrhart.
\newblock Sur un probl\`eme de g\'eom\'etrie diophantienne lin\'eaire. {I}.
  {P}oly\`edres et r\'eseaux.
\newblock {\em J. Reine Angew. Math.}, 226:1--29, 1967.

\bibitem{ehrhart2}
Eugene Ehrhart.
\newblock Sur un probl\`eme de g\'eom\'etrie diophantienne lin\'eaire. {II}.
  {S}yst\`emes diophantiens lin\'eaires.
\newblock {\em J. Reine Angew. Math.}, 227:25--49, 1967.

\bibitem{Fell}
H.~J. Fell.
\newblock On the zeros of convex combinations of polynomials.
\newblock {\em Pacific J. Math.}, 89(1):43--50, 1980.

\bibitem{Fro}
G.~Frobenius.
\newblock \"{U}ber die {B}ernoulli’schen {Z}ahlen und die {E}uler’schen
  polynome.
\newblock {\em {S}itzungberichte der {K}\"oniglich {P}reu{\ss}ichen {A}kademie
  der {W}issenschaften}, 1910.
\newblock Zweiter Halbband.

\bibitem{Gal05}
{\'S}wiatos{\l}aw~R. Gal.
\newblock Real root conjecture fails for five- and higher-dimensional spheres.
\newblock {\em Discrete Comput. Geom.}, 34(2):269--284, 2005.

\bibitem{HOW99}
James Haglund, Ken Ono, and David~G. Wagner.
\newblock Theorems and conjectures involving rook polynomials with only real
  zeros.
\newblock In {\em Topics in Number Theory}, volume 467 of {\em Mathematics and
  Its Applications}, pages 207--222. Kluwer, 1999.

\bibitem{HJZ2013}
Guoniu Han, Fr{\'e}d{\'e}ric Jouhet, and Jiang Zeng.
\newblock Two new triangles of q-integers via q-{E}ulerian polynomials of type
  {A} and {B}.
\newblock {\em Ramanujan J.}, 31(1-2):115--127, 2013.

\bibitem{Liu12}
Lily~L. Liu.
\newblock Polynomials with real zeros and compatible sequences.
\newblock {\em Electronic J. Combinatorics}, 19(3), 2012.
\newblock Research Paper P33, 10 pp. (electronic).

\bibitem{MR2417935}
Percy~A. MacMahon.
\newblock {\em Combinatory analysis. {V}ol. {I}, {II} (bound in one volume)}.
\newblock Dover Phoenix Editions. Dover Publications Inc., Mineola, NY, 2004.
\newblock Reprint of {\it An introduction to combinatory analysis} (1920) and
  {\it Combinatory analysis. Vol. I, II} (1915, 1916).

\bibitem{MustataPayne}
Mircea Musta{\c{t}}{\v{a}} and Sam Payne.
\newblock Ehrhart polynomials and stringy {B}etti numbers.
\newblock {\em Math. Ann.}, 333(4):787--795, 2005.

\bibitem{NPT11}
Eran Nevo, T.~Kyle Petersen, and Bridget~E. Tenner.
\newblock The $\gamma$-vector of a barycentric subdivision.
\newblock {\em J. Combin. Theory Ser. A}, 118:1364--1380, 2011.

\bibitem{obreschkoff}
Nikola Obreschkoff.
\newblock {\em Verteilung und {B}erechnung der {N}ullstellen reeller
  {P}olynome}.
\newblock VEB Deutscher Verlag der Wissenschaften, Berlin, 1963.

\bibitem{PS2}
Thomas~W. Pensyl and Carla~D. Savage.
\newblock Lecture hall partitions and the wreath products ${C}_k \wr {S}_n$.
\newblock {\em Integers}, 12B:A10, 2012/13.

\bibitem{Petersen}
T.~Kyle Petersen.
\newblock Enriched {$P$}-partitions and peak algebras.
\newblock {\em Adv. Math.}, 209(2):561--610, 2007.

\bibitem{SS}
Carla~D. Savage and Michael~J. Schuster.
\newblock Ehrhart series of lecture hall polytopes and {E}ulerian polynomials
  for inversion sequences.
\newblock {\em J. Combin. Theory Ser. A}, 119(4):850--870, 2012.

\bibitem{gopal}
Carla~D. Savage and Gopal Viswanathan.
\newblock The ${1/k}$-{E}ulerian polynomials.
\newblock {\em Electronic J. Combinatorics}, 19, 2012.
\newblock Research Paper P9, 21 pp. (electronic).

\bibitem{MR728500}
Rodica Simion.
\newblock A multi-indexed {S}turm sequence of polynomials and unimodality of
  certain combinatorial sequences.
\newblock {\em J. Combin. Theory Ser. A}, 36(1):15--22, 1984.

\bibitem{nonnegthm}
Richard~P. Stanley.
\newblock Decompositions of rational convex polytopes.
\newblock {\em Ann. Discrete Math.}, 6:333--342, 1980.
\newblock Combinatorial mathematics, optimal designs and their applications
  (Proc. Sympos. Combin. Math. and Optimal Design, Colorado State Univ., Fort
  Collins, Colo., 1978).

\bibitem{Stanley1980}
Richard~P. Stanley.
\newblock The number of faces of a simplicial convex polytope.
\newblock {\em Adv. in Math.}, 35(3):236--238, 1980.

\bibitem{Stein}
Einar Steingr\'{i}msson.
\newblock {\em Permutations Statistics of Indexed and Poset Permutations}.
\newblock PhD thesis, {M}assachusetts {I}nstitute of {T}echnology, 1992.

\bibitem{Stembridge}
John~R. Stembridge.
\newblock Coxeter cones and their $h$-vectors.
\newblock {\em Adv. Math.}, 217(5):1935–--1961, 2008.

\bibitem{Wag92}
David~G. Wagner.
\newblock Total positivity of {H}adamard products.
\newblock {\em J. Math. Anal. Appl.}, 163:459--483, 1992.

\bibitem{Wag00}
David~G. Wagner.
\newblock Zeros of reliability polynomials and {$f$}-vectors of matroids.
\newblock {\em Combin. Probab. Comput.}, 9(2):167--190, 2000.

\end{thebibliography}

\end{document}